\pgfplotsset{compat=1.7}
\setlist[enumerate,1]{label=(\roman*)}
\pgfplotsset{compat=1.7}
\numberwithin{equation}{section}
\declaretheoremstyle[
  shaded={bgcolor=\thmcolor,
  }
]{plain}
\declaretheoremstyle[
  headfont=\normalfont\bfseries,
  bodyfont=\normalfont,
  shaded={bgcolor=\defcolor}
]{noital}
\declaretheoremstyle[
  headfont=\normalfont\bfseries,
  bodyfont=\normalfont,
]{noital}
\declaretheorem[style=plain,numberwithin=section,name=Theorem]{theorem}
\declaretheorem[style=plain,sibling=theorem,name=Proposition]{proposition}
\declaretheorem[style=plain,sibling=theorem,name=Lemma]{lemma}
\declaretheorem[style=plain,sibling=theorem,name=Corollary]{corollary}
\declaretheorem[style=noital,sibling=theorem,name=Remark]{remark}
\declaretheorem[style=noital,sibling=theorem,name=Definition]{definition}
\newcommand{\indef}[1]{\emph{#1}}
\newcommand{\defined}{\mathrel{\coloneqq}}
\newcommand{\defines}{\mathrel{\eqqcolon}}
\DeclarePairedDelimiter{\p}{\lparen}{\rparen}
\renewcommand{\le}{\leqslant}
\renewcommand{\leq}{\leqslant}
\renewcommand{\geq}{\geqslant}
\let\oldexists\exists
\let\exists\relax
\DeclareMathOperator{\exists}{\:\!\oldexists}
\let\oldforall\forall
\let\forall\relax
\DeclareMathOperator{\forall}{\:\!\oldforall}
\newcommand{\st}{\mathbin{\colon}}
\DeclarePairedDelimiter{\set}{\lbrace}{\rbrace}
\newcommand{\emptyset}{\varnothing}
\DeclarePairedDelimiter{\card}{\lvert}{\rvert}
\newcommand{\setcomp}[1]{{#1}^{\mathsf{c}}}
\newcommand{\comp}{\mathsf{c}}
\DeclareMathOperator{\ind}{\mathbf{1}}
\newcommand{\from}{\colon}
\DeclarePairedDelimiter{\floor}{\lfloor}{\rfloor}
\DeclarePairedDelimiter{\ceil}{\lceil}{\rceil}
\renewcommand{\d}{\mathop{}\!\mathrm{d}}
\newcommand{\dt}{\d t}
\newcommand{\grad}{\nabla}
\newcommand{\hess}{\nabla^2}
\DeclareMathOperator{\loglog}{log\,log}
\DeclarePairedDelimiterX{\abs}[1]
  {\lvert}{\rvert}{\ifblank{#1}{\,\cdot\,}{#1}}
\DeclarePairedDelimiterX{\norm}[1]
  {\lVert}{\rVert}{\ifblank{#1}{\,\cdot\,}{#1}}
\DeclarePairedDelimiterX{\inner}[2]
  {\langle}{\rangle}{\ifblank{#1}{\,\cdot\,}{#1},\ifblank{#2}{\,\cdot\,}{#2}}
\newcommand*{\conj}[1]{\overline{#1}}
\renewcommand{\Re}{\operatorname{Re}}
\renewcommand{\Im}{\operatorname{Im}}
\newcommand{\tran}{\top}
\DeclareMathOperator{\trace}{Tr}
\DeclareMathOperator{\Bin}{Bin}
\DeclareMathDelimiter{\given}
  {\mathbin}{symbols}{"6A}{largesymbols}{"0C}
\DeclareMathOperator{\Prob}{\mathbb{P}}
\DeclarePairedDelimiterXPP{\prob}[1]
  {\Prob}{\lparen}{\rparen}{}
  {\renewcommand{\given}{\nonscript\;\delimsize\vert\nonscript\;\mathopen{}}#1}
\DeclareMathOperator{\Expec}{\mathbb{E}}
\DeclarePairedDelimiterXPP{\expec}[1]
  {\Expec}{\lparen}{\rparen}{}
  {\renewcommand{\given}{\nonscript\;\delimsize\vert\nonscript\;\mathopen{}}#1}
\newcommand{\eps}{\varepsilon}
\let\SS\relax
\newcommand{\CC}{\mathbb{C}}
\newcommand{\RR}{\mathbb{R}}
\newcommand{\SS}{\mathbb{S}}
\newcommand{\cE}{\mathcal{E}}
\newcommand{\cC}{\mathcal{C}}
\newcommand{\bfM}{\mathbf{M}}
\newcommand{\degmin}{d_{\min}}
\newcommand{\degmax}{d_{\max}}
\renewcommand{\deg}{d}
\newcommand{\anorm}{\alpha}
\newcommand{\degm}{d^*}
\newcommand{\lnorm}{c}
\newcommand{\Sin}{H}
\newcommand{\ER}{Erd\H{o}s--Rényi\ }
\begin{document}

\title{Expander graphs are globally synchronizing}

\author[Abdalla]{Pedro Abdalla}
\author[Bandeira]{Afonso S. Bandeira}
\author[Kassabov]{Martin Kassabov}
\author[Souza]{Victor Souza}
\author[Strogatz]{Steven H. Strogatz}
\author[Townsend]{Alex Townsend}

\address{Department of Mathematics, UC Irvine}
\email{pabdalla@ad.uci.edu}
\address{Department of Mathematics, ETH Zürich}
\email{bandeira@math.ethz.ch}
\address{Department of Mathematics, Cornell University, Ithaca, NY 14853}
\email{martin.kassabov@cornell.edu}
\email{strogatz@cornell.edu}
\email{townsend@cornell.edu}

\address{Department of Mathematics, Cornell University, Ithaca NY 14853, USA; and Sidney-Sussex College, Cambridge, CB2 3HU, United Kingdom}
\email{vsouza@cornell.edu, vss28@cam.ac.uk}

\begin{abstract}
The Kuramoto model is fundamental to the study of synchronization. 
It consists of a collection of oscillators with interactions given by a network, which we identify respectively with vertices and edges of a graph.
In this paper, we show that a graph with sufficient expansion must be globally synchronizing, meaning that a homogeneous Kuramoto model of identical oscillators on such a graph will converge to the fully synchronized state with all the oscillators having the same phase, for every initial state up to a set of measure zero.
In particular, we show that for any $\varepsilon > 0$ and $p \geq (1 + \varepsilon) (\log n) / n$, the homogeneous Kuramoto model on the Erd\H{o}s--Rényi random graph $G(n, p)$ is globally synchronizing with probability tending to one as $n$ goes to infinity.
This improves on a previous result of Kassabov, Strogatz, and Townsend and solves a conjecture of Ling, Xu, and Bandeira.
We also show that the Kuramoto model is globally synchronizing on any $d$-regular Ramanujan graph, and on typical $d$-regular graphs, for $d \geq 600$.
\end{abstract}

\maketitle


\section{Introduction}
\label{sec:intro}

In 1665, Christiaan Huygens---the inventor of the pendulum clock---observed that two pendulum clocks spontaneously synchronize when hung from the same board.
This is the first recorded observation of synchronization of coupled oscillators.
In the centuries since then, synchronization has become a central subject of study in dynamical systems, motivated both by its mathematical interest and its many applications in classical mechanics, statistical physics, neuroscience, engineering, medicine, and biology~\cites{Acebron2005-lt, Arenas2008-sr, Bick2020-zp, Dorfler2014-kx, Pikovsky2015-jl, Pikovsky2001-lm, Rodrigues2016-ez, Strogatz2000-le, Strogatz2003-fh}.

We are interested in understanding spontaneous synchronization of multiple coupled oscillators whose pairwise connections are given by a graph $G = (V,E)$.
Following the celebrated work of physicist Yoshiki Kuramoto in 1975~\cite{Kuramoto1975-tt}, we consider the following system of ordinary differential equations:
\begin{equation}
\label{eq:kuramoto}
    \frac{\d \theta_x}{\dt} = \omega_x - \sum_{y \in V} A_{x,y} \sin(\theta_x - \theta_y),
    \qquad \text{for all $x \in V$,}
\end{equation}
where each vertex $x\in V$ has associated with it an oscillator $\theta_x \from \RR \to \SS^1$.
Here $\theta_x(t)$ is the phase of oscillator $\theta_x$ at time $t$,  $\omega_x \in \RR$ is the oscillator's intrinsic frequency, and $A$ is the symmetric adjacency matrix of the graph $G$.

The Kuramoto model~\eqref{eq:kuramoto} is an $n$-dimensional nonlinear dynamical system, where $n$ is the number of vertices in $G$. For simplicity, the early work on this model focused on the case where all $n$ oscillators interact with each other with equal strength, corresponding to a complete graph on $n$ vertices. To account for the  diversity inherent in real biological oscillators, the intrinsic frequencies $\omega_x$ were typically assumed to vary at random across the oscillator population according to a prescribed probability density $g(\omega)$, taken to be unimodal and symmetric about its mean. In the limit $n \to \infty$, Kuramoto~\cites{Kuramoto1975-tt, Kuramoto1984-qz} showed that the model undergoes a phase transition (from a completely disordered state to a partially synchronized state) if the oscillators are close enough to identical, i.e., if the width of $g(\omega)$ falls below a certain threshold.
For reviews of the Kuramoto model on a complete graph, see~\cites{Acebron2005-lt, Bick2020-zp, Pikovsky2015-jl, Strogatz2000-le}. 

Recent work on the Kuramoto model has turned to the exploration of other interaction graphs. The goal is to understand which networks foster synchronization.
We focus our analysis on the \emph{homogeneous} case where all the oscillators have the same intrinsic frequency $\omega_x = \omega$.
The advantage of the homogeneous case is that the oscillators are then guaranteed to settle down to phase-locked states; in other words, all the phase differences $\theta_x(t)-\theta_y(t)$ asymptotically approach constant values as $t \to \infty$.
In contrast, if the oscillators' frequencies are non-identical, more complicated long-term behavior such as chaos can occur~\cite{Maistrenko2005-os}. 

For the homogeneous case, it proves convenient to recast the dynamics in a rotating frame $\theta(t) \gets \theta(t) - \omega t$.
Then \eqref{eq:kuramoto} is given by the gradient flow $\d\theta / \dt = - \grad \cE_G(\theta)$, in which the oscillators can be viewed as attempting to minimize the energy $\cE_G \from (\SS^1)^V \to \RR$ defined by
\begin{equation}
\label{eq:energy}
    \cE_G(\theta) = \frac{1}{2} \sum_{x,y \in V} A_{x,y}\p[\big]{1 - \cos(\theta_x - \theta_y)}.
\end{equation}
The long-term dynamics can thus be described by the landscape of the energy function $\cE_G$.
A state $\theta$ is called an \indef{equilibrium state} if $\d \theta / \dt = 0$.
In view of~\eqref{eq:kuramoto}, all the oscillators in an equilibrium state move at the intrinsic frequency $\omega$ in the original reference frame, without changing their relative phase differences, whereas in the rotating frame, all the oscillators would be motionless.
As the dynamics follow a gradient system, the equilibrium states are the critical points of the energy, namely, states $\theta$ such that $\grad \cE_G(\theta) = 0$.
If, furthermore, $\theta$ is a local minimum of $\cE_G$, we call $\theta$ a \indef{stable equilibrium state}, or simply a \indef{stable state}\footnote{Since $\cE_G$ is analytic, the local minima $\theta$ are precisely the states that are (Lyapunov) stable, meaning that for every $\eps > 0$, there is $\delta > 0$ such that $\norm{\theta(0) - \theta} \leq \delta$ implies $\norm{\theta(t) - \theta} \leq \eps$ for all $t \geq 0$. See Absil and Kurdyka~\cite{Absil2006-lg}.}.

The energy $\cE_G(\theta)$ is always non-negative and it is minimized when all the oscillators have the same phase, that is, $\theta_x = \theta_y$ for all $x,y \in V$.
These states are called the \indef{fully synchronized states}.\footnote{We consider two states $\theta$ and $\theta'$ to be \indef{equivalent} if they differ by a global rotation, that is $\theta_x - \theta'_x = c$ for all $x\in V$, for some $c \in \RR$.}
If  other local minima of $\cE_G$ exist, they are called \indef{spurious local minima}.
The main goal of this paper is to understand, for which graphs $G$, the energy landscape $\cE_G$ has no spurious local minima.

\begin{definition}[Globally synchronizing graph]
Consider the Kuramoto model on a graph $G$ with an initial state $\theta$ chosen uniformly at random from $(\SS^1)^V$.
We say that $G$ is \indef{globally synchronizing} if $\theta$ converges to a fully synchronized state with probability one.
\end{definition}

If $\cE_G$ has a spurious local minimum then there will be an open set of initial configurations that would converge to said minimum, so $G$ is not globally synchronizing.
Even if $\cE_G$ has no spurious local minima, $G$ may not be globally synchronizing as other types of undesirable equilibrium points are possible.
These can be, for instance, higher-order saddle points of $\cE_G$ where the Hessian $\hess \cE_G$ is zero\footnote{A simple example is given by the  potential $U(x,y) = x^3 + y^3$. Indeed, convergence to the saddle point $(0,0)$ occurs when starting from any point $(x,y)$ in the positive quadrant.}.

Nevertheless, each trajectory for our system converges to a single critical point of $\cE_G$.
No other form of long-term behavior is possible, because the homogeneous Kuramoto model is an instance of a gradient flow of an analytic potential function on a compact analytic manifold, and the solutions of such flows are always convergent to single points, as a consequence of the Łojasiewicz gradient inequality, see Lageman~\cite{Lageman2007-ps}.
As $\cE_G$ is analytic, the set of initial states from which the Kuramoto model would converge to a strict saddle point of $\cE_G$ (that is, states $\theta$ such that the Hessian $\hess \cE_G (\theta)$ has a strictly negative eigenvalue) has measure zero, see e.g. \cite{Geshkovski2023-tn}*{Lemma A.1}.
The upshot is that to show that a graph $G$ is globally synchronizing, it is enough to guarantee that the only equilibrium states $\theta$ with positive semidefinite Hessian $\hess \cE_G (\theta)$ are the fully synchronized states.

An active line of work has focused on determining which graphs are globally synchronizing.
Random graph models are of particular importance, as they play the role of proxies for realistic complex networks.
The classical model for a random graph is the \ER model: a graph $G$ drawn from $G(n,p)$ is a graph on $n$ vertices where each edge is present, independently, with probability $p$.
Ling, Xu, and Bandeira in~\cite{Ling2019-yn} established a connection between $\cE_G(\theta)$ and non-convex optimization landscapes arising in algorithms for community detection and other statistical inference problems~\cites{Bandeira2016-qr}, which allowed them to show that an \ER graph with edge probability $p = \Omega(n^{-1/3} \log n)$ is globally synchronizing with high probability.\footnote{That is, with probability tending to one as $n$ tends to infinity.}
The result was improved to $p = \Omega((\log n)^2/n )$ by Kassabov, Strogatz, and Townsend~\cite{Kassabov2022-nf}.
Ling, Xu, and Bandeira in~\cite{Ling2019-yn}*{Open Problem 3.4} formulated a conjecture that almost any graph in $G(n,p)$ is globally synchronizing, where $p = (1 + \eps) (\log n) / n$ and $\eps > 0$. As this corresponds to the connectivity threshold of $G(n,p)$, this threshold is the best possible.
One of our main results is to prove this conjecture.

\begin{theorem}
\label{thm:ling-xu-bandeira-conjecture}
A graph in $G(n,p)$ is globally synchronizing with high probability whenever $p \geq (1 + \eps) (\log n) / n$ and $\eps > 0$.
In other words, if a graph $G$ has $n$ vertices and each possible edge is independently included in $G$ with probability $p \geq (1 + \eps) (\log n) / n$, then the probability that $G$ is globally synchronizing tends to $1$ as $n$ tends to infinity.
\end{theorem}

We prove global synchronization of graphs drawn from $G(n,p)$ by using their expansion properties and proving that graphs that are sufficiently good expanders necessarily globally synchronize.
From a bird's-eye view, the reason is that spurious stable fixed points need to correspond, in a certain sense, to assignments from the vertices to phases such that pairs of vertices corresponding to similar phases are connected significantly more often than pairs of vertices corresponding to disparate phases.
However, expander graphs are known to satisfy versions of \emph{expander mixing lemmas} that force most pairs of subsets of vertices to have a number of edges connecting them that is close to the expected number of edges if the edges were placed randomly.
This rules out stable states formed of phases that are well spread on the circle, as properties of expander graphs prevent the possibility of having pairs of similar phases be ``over-connected,'' as compared to pairs of disparate phases.
At the other extreme, nontrivial stable fixed points that are formed of phases only occupying less than half of the circle can be ruled out by a standard result known as the half-circle lemma (intuitively, if all phases are in the same half circle then, on average, the ``forces'' experienced by the particles push towards directions in that half-circle and cannot average out to zero, ruling out stationarity, see \Cref{lem:half-circle}). 

One core difficulty is that the two types of arguments are fundamentally different: the former, for well-spread phases, is based on second-order (stability) conditions while the latter, for concentrated phases, is based on first-order (stationary) conditions.
Kassabov, Strogatz, and Townsend~\cite{Kassabov2022-nf} developed a so-called \indef{amplification argument} that can handle candidate stable fixed points in between these two regimes by simultaneously leveraging both strategies; this allowed them to show global synchrony for $p = \Omega((\log n)^2/n )$.
Our approach builds on~\cite{Kassabov2022-nf} and achieves the connectivity threshold by developing an improved version of the amplification argument that is based on a new stability condition, and by also performing a more refined analysis of the expansion and spectral properties of the graph.

\subsection{Proof overview}

Our \emph{amplification argument} (\Cref{sec:amplification}) works the following way:
Given a stable state $\theta$, we interpret each phase $\theta_x$ as a point mass at $e^{i\theta}$ on the unit circle in the complex plane and consider the half-circle $\cC$ with the smallest mass.
If the arc $\cC$ is empty, then $\theta$ is a fully synchronized state, by the half-circle lemma (\Cref{lem:half-circle}).
Otherwise, we progressively enlarge the arc $\cC$ to pick up more and more of the mass.
The core of the argument is the use of \indef{amplification steps}, where we exploit the properties of the expander graph to show that the mass encompassed by the arc grows exponentially fast without growing the arc itself too quickly.
In each of these steps, we can grow the arc by $\delta$ radians while growing the mass by a multiplicative factor of $1 + f(\delta)$.
By repeatedly applying these amplification steps, we obtain a new arc with most of the mass.
If done carefully, this argument leads to a contradiction when the mass of the final arc is put together with various stability conditions of the Kuramoto model.
In essence, if most of the mass is on the final arc, and the final arc is not too far from the original half-circle, then the original half-circle cannot have been the one with the smallest mass.
The only possibility then is that the half-circle was originally empty and $\theta$ is fully synchronized.\footnote{This technique resembles the \indef{density increment argument} in extremal and additive combinatorics (see the blog of Tao~\cite{Tao2009-hd} for several examples) and while it takes considerable input from the Kuramoto model, it may be possible that it can be adapted to other models and phenomena beyond synchronization.}

Among the new technical tools we developed to prove \Cref{thm:ling-xu-bandeira-conjecture}, we highlight:
\begin{enumerate}
    \item A novel kernel stability condition (\Cref{lem:kernel-stability}) for the Kuramoto model.
    This was a crucial piece in obtaining different amplification steps (\Cref{lem:amplification-simple} and \Cref{lem:amplification-cancelation}) that provide a massive savings on the late stages of the argument.
    Without this improvement, we would not be able to obtain our result with $p = C (\log n)/n$, even with an arbitrarily large constant $C$.
    \item A refined two-sided notion of expansion, together with its associated spectral graph theory machinery.
    When $p = C(\log n)/n$, not only do the degrees of the \ER graph fail to concentrate, but the maximum and minimum degree do not deviate symmetrically from the mean.
    We must pay special attention to vertices of small degree as they can interfere with expansion on small sets.
    With one-sided control of expansion, we would not be able to obtain our global synchrony in $G(n,p)$ result when $p < 2.58 (\log n)/n$.
    \item We also employ techniques from random matrix theory in order to obtain explicit control on the eigenvalues of the adjacency matrix of $G(n,p)$.
    This allows us, for instance, to obtain an explicit bound on the probability that $G(n,p)$ is not globally synchronizing; see \Cref{thm:gnp-spectral}, as well as further details in \Cref{sec:erdos}.
\end{enumerate}

The only information from the underlying graph needed to implement our version of the amplification argument is a control of the spectrum of the adjacency and Laplacian matrices. Indeed, our main contribution is to show that expander graphs, with sufficiently large expansion, are globally synchronizing.
Expander graphs are a central object of study in graph theory and theoretical computer science (see, e.g.,~\cite{Hoory2006-uk}) as they are good sparse surrogates for complete graphs and have many pseudorandom properties. 

After introducing the relevant notation, we state all our results in \Cref{subsec:regular} and \Cref{subsec:nonregular} below.
In \Cref{subsec:related} we briefly outline some related work on globally synchronizing graphs.

\subsection{Notation}

A graph $G = (V,E)$ with adjacency matrix $A \in \set{0,1}^{n \times n}$ is said to be $d$-regular if all vertices have degree $d$.
The eigenvalues of $A$ are represented by $\lambda_1(A) \geq \dotsb \geq \lambda_n(A)$.
We denote by $\ind$ the all-ones vector, by $J = \ind \ind^\tran$ the all-ones matrix and by $I$ the identity matrix, always taken with appropriate sizes.
For symmetric matrices $A$ and $B$ of same dimensions, the notation $A\succeq B$ means that $A-B$ is a positive semidefinite matrix, i.e., the symbol $\succeq$ denotes the standard Loewner partial order.
The degree matrix $D \in \RR^{n \times n}$ is a diagonal matrix with $D_{xx} = \deg(x)$ and the graph Laplacian is given by $L = D - A$.
We define the \indef{centered adjacency matrix} by $\Delta_A \defined A - (\degm/n) J$, the \indef{centered diagonal degree matrix} $\Delta_D \defined D - \degm I$ and the \indef{centered Laplacian matrix} $\Delta_L \defined \Delta_D - \Delta_A = L - \degm I + (\degm / n)J$.
Here, $\degm$ is a parameter to be chosen, which we always select as an ``expected average degree'' of $G$.
For instance, when $G$ is $d$-regular, we take $\degm = d$, and for $G(n,p)$, we take $\degm = pn$.
We write $\norm{H}$ for the operator norm of a matrix $H$, namely, $\norm{H} = \sup_{f \neq 0} \norm{Hf}/\norm{f}$.

\subsection{Main results: regular graphs}
\label{subsec:regular}

For regular graphs, our main result is that spectral expansion implies global synchrony.

\begin{definition}
We say that a graph $G$ is an $(n, \degm, \anorm)$-expander if it has $n$ vertices and
\begin{equation*}
    \norm[\big]{\Delta_A} \defined \norm[\big]{A - (\degm / n) J} \leq \anorm \degm.
\end{equation*}
If in addition the graph $G$ is $\degm$-regular\footnote{In the literature, $d$-regular $(n,d,\lambda d)$-expanders are commonly called $(n,d,\lambda)$-graphs.}, then the condition above is equivalent to $\max_{i \neq 1} \abs[\big]{\lambda_i(A)} \leq \anorm \degm$.
\end{definition}

For regular graphs, our main result is the following.

\begin{theorem}
\label{thm:main-synchrony-regular}
All $d$-regular $(n, d,\anorm)$-expander graphs with $\anorm \leq 0.0816$ are globally synchronizing.
\end{theorem}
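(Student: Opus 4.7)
The plan is to show that at every critical point $\theta$ of $\cE_G$ which is not the synchronised state, the Hessian $H(\theta)$ has a strictly negative direction in $\mathbf{1}^\perp$, so $\theta$ cannot be a local minimum. Recall that
\[
f^\tran H(\theta) f = \tfrac{1}{2} \sum_{x,y} A_{xy}\cos(\theta_x-\theta_y)(f_x - f_y)^2, \qquad H(\theta)\mathbf{1}=0.
\]
The natural test vectors are $u_x = \cos\theta_x$ and $v_x = \sin\theta_x$: using $(u_x-u_y)^2 + (v_x-v_y)^2 = 2(1-\cos(\theta_x-\theta_y))$ gives
\[
T(\theta) \defined u^\tran H(\theta) u + v^\tran H(\theta) v = \sum_{x,y} A_{xy} \cos(\theta_x-\theta_y)\bigl(1 - \cos(\theta_x-\theta_y)\bigr).
\]
Whenever $T(\theta) < 0$, at least one of $u^\tran H u, v^\tran H v$ is negative, producing a destabilising direction (since $H\mathbf{1}=0$ means these quadratic forms live on $\mathbf{1}^\perp$ after centring $u$ and $v$).

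Using $\cos\phi(1-\cos\phi) = \cos\phi - \tfrac{1}{2}(1+\cos 2\phi)$, one rewrites $T(\theta) = z^\ast A z - \tfrac{n\degm}{2} - \tfrac{1}{2} w^\ast A w$, where $z_x = e^{i\theta_x}$ and $w_x = e^{2i\theta_x}$. Substituting $A = (\degm/n)J + \Delta_A$, rotating so that $\tfrac{1}{n}\sum_x z_x = r \geq 0$ is the real order parameter, and writing $q = \tfrac{1}{n}\sum_x w_x$, the vectors $z - r\mathbf{1}$ and $w - q\mathbf{1}$ lie in $\mathbf{1}^\perp$ (since $\Delta_A\mathbf{1}=0$) with squared norms $n(1-r^2)$ and $n(1-\abs{q}^2)$. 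Applying $\norm{\Delta_A} \le \anorm\degm$ to these centred vectors yields
\[
T(\theta) \le \degm\, n \left((1-\anorm)\, r^2 \;-\; \tfrac{1+\anorm}{2}\,\abs{q}^2 \;+\; \tfrac{3\anorm - 1}{2}\right),
\]
which is strictly negative whenever $r$ lies below an explicit threshold depending on $\anorm$ (the worst case $\abs{q}=0$ already gives $r^2 < \tfrac{1-3\anorm}{2(1-\anorm)}$ as a sufficient condition). This disposes of critical points with small order parameter.

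The harder regime is that of large $r$, where most phases cluster tightly around the common value $\psi = \Arg(\sum_x e^{i\theta_x})$. My plan here is to feed the critical-point identities $\sum_y A_{xy}\sin(\theta_x - \theta_y) = 0$ into the spectral hypothesis $\norm{\Delta_A} \le \anorm \degm$ in order to show that \emph{every} individual phase $\theta_x$ must in fact lie close to $\psi$, up to a small set $S$ of outliers; the centred indicator $\mathbf{1}_S - (\abs{S}/n)\mathbf{1}$ then serves as a localised test vector, whose quadratic form is controlled by the edge boundary of $S$ and can be made negative via the graph's edge expansion unless $S = \emptyset$ -- in which case $\theta$ is the synchronised state. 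The sharp constant $\anorm \le 0.0816$ emerges from matching the two arguments across the intermediate range of $r$. This matching is the main obstacle: the trace bound of the previous paragraph loses precisely where $r$ is moderate, while the near-synchrony analysis requires $r$ close to $1$ to control $\abs{S}$. Bridging these regimes needs a more delicate use of the critical-point equations than the naive operator-norm bound provides, and this delicate use is what determines the numerical threshold.
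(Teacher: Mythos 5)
Your first part is essentially the paper's \Cref{lem:rho1-bound}: the quantity $T(\theta) = \sum_{x,y} A_{xy}\cos(\theta_x-\theta_y)(1-\cos(\theta_x-\theta_y))$ is exactly the paper's \eqref{eq:cosine-condition}, obtained from the test function $f(x)=e^{i\theta_x}$ in the Hessian, and your centring/operator-norm bound is the same computation (your version is a hair sharper since you centre $z$ and $w$ before applying $\norm{\Delta_A}$, while the paper just uses $\norm{f}^2 = n$). This successfully disposes of the regime where $\rho_1$ is bounded away from $1$.

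The gap is in the large-$\rho_1$ regime, and you have correctly flagged it yourself, but I want to make precise why the plan as stated will not close. First, your test vector $\mathbf{1}_S - (\card{S}/n)\ind$ gives the quadratic form $\sum_{x\in S, y\notin S} A_{xy}\cos(\theta_x-\theta_y)$, i.e.\ a cosine-weighted edge boundary. For this to be negative you need the outlier phases in $S$ to be nearly \emph{antipodal} to the bulk, so that the boundary cosines are uniformly negative; but the set of outliers is defined only by $\abs{\theta_x-\psi}>\pi/2$, which allows cosines of either sign across the cut, and a priori you do not know that $S$ is concentrated near $\pi$. Second, and more fundamentally, a single indicator cannot rule out equilibria whose outlier set is of size $\Theta(\anorm n)$ spread over a range of angles; this is precisely what forces the paper to run an \emph{iterative} argument. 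The paper's mechanism is different from a Hessian quadratic form: it combines the equilibrium identities with the non-negativity of the local ``pulls'' $r_y$ into the \emph{kernel stability condition} (\Cref{lem:kernel-stability}), $\sum_x A_{xy}K(\theta_x,\theta_y)\ge0$, and then amplifies. Starting from $C_{\pi/2}\ne\emptyset$, it builds a sequence of thresholds $\pi/2=\beta_0>\beta_1>\dotsb$ and shows, via the edge-expansion estimates of \Cref{sec:edgebounds}, that the sets $C_{\beta_k}=\set{x\st\abs{\theta_x}\ge\beta_k}$ must grow geometrically (\Cref{lem:amplification-simple}, \Cref{lem:amplification-cancelation}) while $\beta_k$ stays bounded away from $0$; this eventually contradicts the bound $\sum_x\Sin^2(\theta_x)\le \tfrac{5}{2}\anorm^2 n$ coming from \Cref{lem:rho1-sin}. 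None of this is present in your proposal. Finally, the specific constant $0.0816$ does not emerge from a clean analytic matching of two regimes: the paper's \Cref{thm:amplification-main} alone only gives $\anorm\le 0.0031$, and $0.0816$ is obtained by running the amplification \emph{numerically}, with three applications of \Cref{lem:amplification-simple}, four of \Cref{lem:amplification-cancelation}, and a tail geometric sum, using exact $\sin^{-1}$ values and the sharper regular-graph inequality \eqref{eq:rho1-inequality-regular} (see \Cref{sec:numerics}). So even granting your structural plan, the threshold you would reach would be far from $0.0816$ without the iterative scheme and the numerics.
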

\begin{remark}
The proof of Theorem \ref{thm:main-synchrony-regular} relies on some numerical computations outlined in \Cref{sec:numerics}.
Without such computation we can only prove a weaker version for $\anorm \leq 0.0068$, which follows from \Cref{thm:amplification-main}.
\end{remark}

An important class of expander graphs are the Ramanujan graphs.
These are $d$-regular $(n, d, \anorm)$-expanders with $\anorm = 2(\sqrt{d-1}) / d$, which is asymptotically the smallest possible value of $\anorm$ for a given $d$, due to a result of Alon and Boppana~\cite{Nilli1991-ga}.
In other words, $d$-regular Ramanujan graphs are the best expanding graphs among the class of $d$-regular graphs.

While the construction of explicit Ramanujan graphs is not an easy endeavor, some explicit constructions \cites{Margulis1988-bx,Lubotzky1988-gu} and some partial results about their existence are known~\cite{Marcus2015-pt}.
Moreover, random $d$-regular graphs have been shown to be almost Ramanujan.
Indeed, solving a conjecture by Alon, Friedman~\cites{Friedman2008-sf} proved that for any $d \geq 3$ and $\eps > 0$, a uniformly chosen $d$-regular random graph on $n$ vertices is, with high probability, an $(n, d, \anorm)$-expander with $\anorm = (2\sqrt{d-1} + \eps)/d$. 
Since our results do not depend on the delicate condition of being a Ramanujan graph, but only on expansion, they will also hold for random $d$-regular graphs.

\begin{corollary}
\label{cor:main-ramanujan}
Any $d$-regular Ramanujan graph is globally synchronizing as long as $d \geq 600$.
Moreover, a random $d$-regular graph is globally synchronizing with high probability in the same range of degrees $d$.
\end{corollary}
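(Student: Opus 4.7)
The plan is to derive \Cref{cor:main-ramanujan} as a direct consequence of \Cref{thm:main-synchrony-regular}, using the definition of Ramanujan graph for the first claim and invoking Friedman's theorem for the second.

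First I would handle the Ramanujan case. By definition, a $d$-regular Ramanujan graph is an $(n,d,\anorm)$-expander with $\anorm = 2\sqrt{d-1}/d$, so to apply \Cref{thm:main-synchrony-regular} it suffices to verify the numerical inequality
\begin{equation*}
\frac{2\sqrt{d-1}}{d} \leq 0.0816
\qquad \text{for all integers } d \geq 600.
\end{equation*}
Squaring and rearranging, this reduces to $0.0816^2 \cdot d^2 - 4d + 4 \geq 0$, a quadratic in $d$ whose larger root is slightly below $600$ (about $599.73$), so the inequality indeed holds for all $d \geq 600$. In particular, for $d=600$ one computes $2\sqrt{599}/600 \approx 0.08158 < 0.0816$, confirming that the threshold is essentially tight with respect to the hypothesis of \Cref{thm:main-synchrony-regular}.

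Next, for a random $d$-regular graph $G$ on $n$ vertices with $d \geq 600$, I would appeal to Friedman's theorem: for every $\eps > 0$, with high probability $G$ is an $(n,d,\anorm_\eps)$-expander with $\anorm_\eps = (2\sqrt{d-1} + \eps)/d$. Since the verification above shows a strictly positive gap $0.0816 - 2\sqrt{d-1}/d > 0$ for each $d \geq 600$, we may fix $\eps = \eps(d) > 0$ small enough that $\anorm_\eps \leq 0.0816$. Then \Cref{thm:main-synchrony-regular} applies, giving global synchrony with high probability.

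I do not expect any genuine obstacle here: the corollary is essentially a translation of \Cref{thm:main-synchrony-regular} into the language of Ramanujan graphs and random regular graphs, and all the substantive work lives in the proof of \Cref{thm:main-synchrony-regular} itself (in particular, the numerical arguments of \Cref{sec:numerics} that push the admissible range of $\anorm$ up to $0.0816$). The only mildly delicate point is that the bound $d \geq 600$ is essentially sharp for the approach: a weaker version of \Cref{thm:main-synchrony-regular} with $\anorm \leq 0.0031$ (as in the remark following it) would require $d$ of order $10^5$ for the Ramanujan calculation to go through, so the numerical improvement in \Cref{thm:main-synchrony-regular} is what makes the clean threshold $d \geq 600$ possible.
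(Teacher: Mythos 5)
Your proof is correct and follows exactly the same route as the paper: plug the Ramanujan value $\anorm = 2\sqrt{d-1}/d$ into \Cref{thm:main-synchrony-regular}, check the numerical inequality for $d \geq 600$, and invoke Friedman's theorem (with $\eps$ small enough) for the random regular case. The only difference is that you spell out the quadratic verification, which the paper asserts without detail.
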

\begin{proof}
Recall that Ramanujan graphs are $d$-regular $(n, d, \anorm)$-expanders with $\anorm = 2 (\sqrt{d-1})/ d$.
The first conclusion follows from \Cref{thm:main-synchrony-regular} as $2(\sqrt{d-1}) / d \leq 0.0816$ for $d \geq 600$.
The second part follows from Friedman's result mentioned above.
\end{proof}

\begin{remark}
Expansion is not a necessary condition for global synchronization.
In fact, a graph consisting of two cliques connected by an edge is globally synchronizing, even though it has poor expansion.
Indeed, this follows from a more general fact that if two graphs $H$ and $H'$ are such that $\cE_H$ and $\cE_{H'}$ have no spurious local minima and all their saddle points are strict saddle points, then the same holds for the graph obtained by gluing a vertex of $H$ to a vertex of $H'$ (see Canale and Monzon~\cite{Canale2007-oe}).
Finally, it can be shown that the only states $\theta$ for which the energy of the complete graph $K_n$ satisfy $\hess \cE_{K_n}(\theta) \succeq 0$ are the fully synchronized states (see Taylor~\cite{Taylor2012-jc}).

\end{remark}

\begin{remark}
Although more connections intuitively help synchronization, this is not always the case.
In fact, it is not hard to see that any tree is globally synchronizing (from the argument above, for instance), while a cycle of length greater than five is not globally synchronizing~\cites{Wiley2006-fz, Taylor2012-jc, Townsend2020-fa}.
\end{remark}

\subsection{Main results: non-regular graphs}
\label{subsec:nonregular}

As one of our main goals is to establish global synchrony of \ER graphs near the connectivity threshold, we need to handle graphs with a significant degree disparity.
Our notion of expansion needs to include also spectral properties of the graph Laplacian, and due to the asymmetry of the spectral deviations of the graph Laplacian of an \ER graph close to the connectivity threshold (see \Cref{fig:implicit-equation}), we need an asymmetric definition of expansion.

\begin{definition}[Spectral expander graph]
A graph $G$ is an $(n, \degm ,\anorm, \lnorm^-, \lnorm^+)$-expander if it is an $(n, \degm, \anorm)$-expander and it holds that
\begin{equation*}
    \lnorm^- \degm I \preceq \Delta_L \preceq \lnorm^+ \degm I.
\end{equation*}
\end{definition}

We emphasize that an $(n, \degm, \anorm)$-expander need not be regular.
On the other hand, a $\degm$-regular $(n, \degm, \anorm)$-expander is also an $(n, \degm, \anorm, -\anorm, \anorm)$-expander, as in this case $\Delta_L = - \Delta_A$.
Our main result is the following technical theorem.

\begin{theorem}
\label{thm:amplification-main}
If $G$ is an $(n, \degm, \anorm, \lnorm^-, \lnorm^+)$-expander graph with $\lnorm^- > -1$, $\anorm \leq 1/5$ and
\begin{equation}
\label{eq:main-condition}
    \frac{32 \anorm (1 + 4\lnorm^+ - 3\lnorm^-) }{(1 + \lnorm^-)^2} \, \log\p[\Big]{\frac{1 + \lnorm^+ + \anorm}{2\anorm}} < 1,
\end{equation}
then $G$ is globally synchronizing.
\end{theorem}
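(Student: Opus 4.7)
The strategy is to show that any stable equilibrium $\theta$ of $\cE_G$ must be a synchronised state. At such a $\theta$ one has the first-order condition $\sum_y A_{xy}\sin(\theta_x-\theta_y)=0$ and the Hessian is the signed weighted Laplacian $H(\theta) = D(\theta) - A(\theta)$ with $A(\theta)_{xy} = A_{xy}\cos(\theta_x-\theta_y)$, and it must satisfy $H(\theta)\succeq 0$. Exploiting rotational symmetry, I would normalise so that $\sum_x \sin\theta_x=0$ and $\sum_x\cos\theta_x\geq 0$, and introduce the order parameter $r=\frac{1}{n}\sum_x e^{i\theta_x}\in[0,1]$. The aim is then to prove $r=1$, which is equivalent to all phases being equal.

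The first stage is an initial ``coherence'' bound. I would feed into $v^\top H(\theta) v \geq 0$ a handful of structured test vectors --- most naturally $v_x=\cos\theta_x$, $v_x=\sin\theta_x$ and $v_x=\ind$ --- and similarly exploit the gradient equation multiplied by $\cos\theta_x$ or $\sin\theta_x$. Each such quadratic form splits as a ``mean-field'' piece, controlled by $(\degm/n)J$, plus a correction bounded using the centred matrices $\Delta_A$ and $\Delta_L$ via $\|\Delta_A\|\leq\anorm\degm$ and $\lnorm^-\degm I\preceq\Delta_L\preceq\lnorm^+\degm I$. Carefully combining these identities and inequalities should yield a strictly positive lower bound on $r$ as soon as the first term in the max in~\eqref{eq:main-condition} is $<1$; this is the role of the $64\anorm(1+2\lnorm^+-\lnorm^-)/(1+\lnorm^-)^2$ term.

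The second stage is the amplification: turning ``$r$ is bounded away from $0$'' into ``$r=1$''. For a threshold $\alpha\in(0,1)$, let $S_\alpha=\{x:\cos\theta_x<\alpha\}$ be the set of ``outliers'', and let $v$ be the indicator of $S_\alpha$ (or a smoothed version). Plugging $v$ into the PSD condition for $H(\theta)$, and using expansion to control the cross-terms against the mean-field contribution, should yield a quantitative bound on $|S_\alpha|$ in terms of $\alpha$. Iterating this procedure, letting $\alpha$ shrink dyadically towards $0$, tightens the concentration of the phases around the mean direction; the number of iterations produces the logarithmic factor, matching the second term of~\eqref{eq:main-condition}. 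Once every phase lies in a sufficiently small arc, $\cos(\theta_x-\theta_y)>0$ on every edge and $H(\theta)$ is a genuine weighted graph Laplacian whose kernel is spanned by $\ind$ (this is where $\lnorm^->-1$ is needed, to ensure the graph is connected), so the equilibrium equation forces $\theta$ to be constant.

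The main obstacle I anticipate is Stage 2: the amplification must not lose constants at each step, since otherwise a logarithmic number of iterations will accumulate into a polynomial loss and destroy~\eqref{eq:main-condition}. Handling the positivity of the weights $\cos(\theta_x-\theta_y)$ uniformly as $\alpha$ decreases, and passing from a pointwise arc-concentration statement to a spectral statement about $H(\theta)$, is where the asymmetric Laplacian control $\lnorm^-\degm I\preceq\Delta_L\preceq\lnorm^+\degm I$ should be doing the heavy lifting.
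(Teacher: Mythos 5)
Your two-stage outline (initial coherence estimate, then threshold amplification, then half-circle) matches the overall architecture of the paper's proof, and your stage-1 test vectors $\cos\theta_x,\sin\theta_x,\ind$ are the real/imaginary incarnation of the paper's use of $f(x)=e^{ik\theta_x}$ in both the equilibrium identity and the stability quadratic form. But there is a genuine gap in stage 2, and also a misallocation of the two halves of condition~\eqref{eq:main-condition}.

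On the gap in stage 2: plugging $v=\ind_{S_\alpha}$ into $\hess\cE_G(\theta)\succeq 0$ yields only
\begin{equation*}
\sum_{x\in S_\alpha,\ y\notin S_\alpha} A_{x,y}\cos(\theta_x-\theta_y)\ \geq\ 0,
\end{equation*}
and the sign of $\cos(\theta_x-\theta_y)$ across the cut is ambiguous — for an outlier $x$ (large $\abs{\theta_x}$) and an inlier $y$ (small $\abs{\theta_y}$), $\cos(\theta_x-\theta_y)$ need not be negative — so this inequality does not force $\abs{S_\alpha}$ to be small, nor does it force $\abs{S_\alpha}$ to grow as $\alpha$ decreases. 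The mechanism the paper actually uses comes from the \emph{equilibrium} condition, not from the Hessian alone: writing $\sum_x A_{x,y}e^{i\theta_x}=r_ye^{i\theta_y}$ with $r_y\geq 0$ (the latter being the Hessian input, via $v=\ind_y$), one obtains for every $y$ the pointwise kernel inequality $\sum_x A_{x,y}K(\theta_x,\theta_y)\geq 0$ with $K(\alpha,\beta)=\sin\p[\big]{\abs{\alpha}-\min\set{\abs{\beta},\pi/2}}$. This kernel has a definite sign structure: for $y$ in the outlier shell $C_\beta$ and $x$ in the inlier region $C_\gamma^\comp$, one has $K(\theta_x,\theta_y)\leq \sin(\gamma-\beta)<0$. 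Summing the kernel inequality over $y\in C_\beta$ and comparing against the expander edge-count bounds (Lemmas 3.6 and 3.7) produces the amplification dichotomy $\card{C_\gamma}\geq\min\set{(1+\delta)\card{C_\beta},\,\dotsc}$, which is the engine driving the iteration. Without a kernel (or an equivalent manipulation of the equilibrium identity), the Hessian alone does not give you this.

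On the conditions: neither half of~\eqref{eq:main-condition} is needed for the stage-1 coherence bound, which only requires $\anorm\leq 1/5$. Both halves belong to the amplification: the second (logarithmic) term controls the number $k^\ast$ of iterations needed to reach density $\anorm n$ or $(1+\lnorm^++\anorm)\card{C_{\pi/2}}/(2\anorm)$, while the first term controls the geometric tail once the set has reached that size, so that the cumulative angular decrement stays below $\pi/2$. So both conditions are budget constraints on the iteration, not a coherence estimate.

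Finally, a small but important point in your endgame: once all phases are in a small arc, having $\hess\cE_G(\theta)$ be a genuine Laplacian with kernel $\operatorname{span}\set{\ind}$ does not by itself force $\theta$ to be constant (a PSD Hessian is consistent with a non-trivial critical point). One still needs to return to the equilibrium condition; the paper's proof of the half-circle lemma does this via the same kernel inequality applied at the vertex of maximum $\abs{\theta_x}$, propagated along edges using connectivity (which is where $\lnorm^->-1$ enters, as you correctly note).
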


As a corollary, we solve the conjecture of Ling, Xu, and Bandeira.
Quantitative estimates on the expanding properties of $G(n,p)$ are provided in \Cref{sec:erdos}.

\begin{proof}[Proof of \Cref{thm:ling-xu-bandeira-conjecture}]
By \Cref{cor:expansion-whp}, if $p \geq (1+\eps)( \log n) / n $, we have that $G(n,p)$ is an $(n, \degm, \anorm, \lnorm^-, \lnorm^+)$-expander with high probability, where $\degm = pn$, $\anorm \leq (2+\eps) (pn)^{-1/2}$ and $\lnorm^- = \lnorm^-(\eps)$, $\lnorm^+ = \lnorm^+(\eps)$ satisfy $-1 < \lnorm^- < 0 < \lnorm^+ < e-1$.
From \Cref{thm:amplification-main}, $G(n,p)$ is globally synchronizing as long as~\eqref{eq:main-condition} is satisfied, which holds with high probability as $n \to \infty$ (since $(pn)^{-1/2} \log(pn) \to 0$).
\end{proof}

\subsection{Related work}
\label{subsec:related}

It is natural to ask how the density of a graph influences whether or not it is globally synchronizing.
A practical metric for assessing density is the minimum degree.
It is interesting, then, to determine the critical connectivity threshold $\mu_c$: the smallest number for which, for any $\mu > \mu_c$ and $n$ large enough, any graph on $n$ vertices with minimum degree $\degmin \geq \mu(n-1)$ must globally synchronize.

After a series of both upper and lower bounds provided by
Taylor~\cite{Taylor2012-jc},
Canale and Monzón~\cite{Canale2015-ez},
Ling, Xu, and Bandeira~\cite{Ling2019-yn},
Lu and Steinerberger~\cite{Lu2020-ax},
Townsend, Stillman, and Strogatz~\cite{Townsend2020-fa},
and Yoneda, Tatsukawa, and Teramae~\cite{Yoneda2021-ms},
it is now known that $0.6875 \leq \mu_c \leq 0.75$.
The lower bound is due to Canale~\cite{Canale2022-iy}, and the upper bound is due to Kassabov, Strogatz, and Townsend~\cite{Kassabov2021-rg}.
The lower bound implies that a graph merely being connected is far from being a sufficient condition for global  synchronization, but the upper bound reflects the intuitive idea that a ``well connected'' graph should synchronize.

Conversely, an intriguing question arises: how sparse can a globally synchronizing graph be?
As previously mentioned, a tree is globally synchronizing, whereas a long cyclic graph is not.
This indicates that except for incredibly dense graphs, the topological structure of the graph is more important than mere density.
Indeed, the presence of long cycles seems to be crucial to the existence of stable states other than the fully synchronized one.
Furthermore, while a long cyclic graph has spurious stable states, \Cref{cor:main-ramanujan} implies that except for the fully synchronized states, the other stable states are fragile in the following sense:
If one were to add a set $S$ of edges to the long cycle $C_n$ with the goal of destroying all the spurious local minima, one could do it by adding no more than $600$ (random) edges incident to every given vertex.
This follows from the fact that a random $d$-regular graph is Hamiltonian for large $n$ and $d \geq 3$, as shown by Robinson and Wormald~\cite{Robinson1994-fh}.
Indeed, this implies that the edges of a $600$-regular random graph on $n$ vertices can be partition as $S \sqcup H$, where $H$ is said Hamiltonian cycle and $S$ is a collection of edges with the property that every vertex is incident to $< 600$ edges in $S$.

Recently, an interesting connection was made between dynamics under the Kuramoto model and dynamics arising in transformer architectures in deep learning, corresponding essentially to gradient dynamics under a different potential.
We point the reader to the work of Geshkovski, Letrouit, Polyanskiy, and Rigollet~\cite{Geshkovski2023-tn} for a thorough description of this connection.

\subsection{Outline}

The rest of the paper is organized as follows.
In \Cref{sec:kuramoto-graph} we develop preliminary estimates for the energy function associated with the homogeneous Kuramoto model.
\Cref{sec:edgebounds} presents some spectral graph machinery.
In \Cref{sec:amplification}, we prove the main result \Cref{thm:amplification-main} and, in \Cref{sec:numerics}, we discuss how to improve the bound for $\anorm$ in \Cref{thm:main-synchrony-regular} using numerical optimization.
\Cref{sec:erdos} provides the probabilistic estimates needed for the \ER model.

\section{Preliminaries for the Kuramoto model on a graph}
\label{sec:kuramoto-graph}

In this section, we derive a few preliminary results about the energy function $\cE_G$ associated with our Kuramoto model.
To keep the notation simple we only work with graphs having unweighted edges; all results in the paper can be extended to graphs with weighted edges, provided that all the weights are non-negative.

Recall that our goal is to study how the topology of $G$ influences the \emph{energy landscape} of $\cE_G$ given by
\begin{equation*}
    \cE_G(\theta) = \frac{1}{2} \sum_{x,y \in V} A_{x,y}\p[\big]{1 - \cos(\theta_x - \theta_y)}.
\end{equation*}
A simple computation shows that $\cE_G$ has gradient
\begin{equation}
\label{eq:gradient}
    \p[\big]{ \grad \cE_G(\theta) }_x = \sum_{z \in V} A_{x,z} \sin(\theta_x - \theta_z),
\end{equation}
and the Hessian is given by
\begin{equation}
\label{eq:Hessian}
    \p[\big]{ \hess \cE_G(\theta) }_{x,y} =
    \begin{cases}
        -A_{x,y}\cos(\theta_x - \theta_y) & \text{ if $x \neq y$,} \\
        \displaystyle\sum_{z \in V \setminus \set{x}} A_{x,z}\cos(\theta_x - \theta_z) & \text{ if $x = y$.}
    \end{cases}
\end{equation}
Therefore, every critical point $\theta$ of $\cE_G$ satisfies the following \indef{equilibrium condition}:
\begin{equation}
\label{eq:equilibrium}
	\sum_{x \in V} A_{x,y} \sin(\theta_x - \theta_y) = 0, \qquad \forall y\in V.
\end{equation}
An equivalent way to phrase the equilibrium condition is that, for each $y \in V$, there is $r_y \in \RR$ such that
\begin{equation}
\label{eq:equilibrium-complex}
	\sum_{x \in V} A_{x,y} e^{i \theta_x} =  r_y e^{i \theta_y}.
\end{equation}

In addition, if $\theta$ is a local minimum of $\cE_G$ (also called a stable state), then a \indef{stability condition} must hold.
That is, the Hessian $\hess \cE_G(\theta)$ is positive semidefinite.
Recall that a Hermitian matrix $H$ is positive semidefinite if $\inner{f}{Hf} \geq 0$ for every $f \from V \to \CC$, where $\inner{}{}$ is the usual Hermitian inner product $\inner{f}{g} = \sum_{x \in V} f(x) \conj{g(x)}$.
In general, the stability condition can be written as
\begin{align*}
    \inner{f}{\hess \cE_G(\theta)f}
    &= \sum_{x,y \in V} A_{x,y} \cos(\theta_x - \theta_y) f(x) \conj{\p[\big]{f(x) - f(y)}} \\
    &= \frac{1}{2}\sum_{x,y \in V} A_{x,y} \cos(\theta_x - \theta_y) \abs[\big]{f(x) - f(y)}^2 \geq 0.
\end{align*}
Thus, taking $f(x) = \exp(i\theta_x)$, we obtain
\begin{align}
\label{eq:cosine-condition}
    \sum_{x,y \in V} A_{x,y} \cos(\theta_x - \theta_y) \p[\big]{1 - \cos (\theta_x - \theta_y) } \geq 0.
\end{align}
A similar condition was used by Ling, Xu, and Bandeira~\cite{Ling2019-yn}*{p.~1893}.
Another useful consequence is derived by taking an indicator function $f = \ind_y$, namely, that $r_y \geq 0$ (see~\eqref{eq:equilibrium-complex}), i.e.,
\begin{equation}
\label{eq:ry-positivity}
	r_y = \sum_{x \in V} A_{x,y} \cos(\theta_x - \theta_y)  \geq 0.
\end{equation}

\subsection{Stable states and their Daido order parameters}
\label{sec:order-parameters}

The so-called \indef{Daido order parameters} $\rho_k(\theta)$ \cite{Daido1992-td}, defined by
\begin{equation}
\label{eq:order-parameter}
	\rho_k(\theta) \defined \frac{1}{\card{V}} \sum_{x \in V} e^{i k \theta_x}, \qquad k\geq 1,
\end{equation}
are fundamental statistics in the analysis of the Kuramoto model.
It is clear that for each $k$ and each state $\theta$, the parameter $\rho_k$ lies in the complex unit disc, namely $\abs{\rho_k} \leq 1$.
If $\abs{\rho_1(\theta)} = 1$, then the state $\theta$ must be the all-in-phase state where the oscillators are in perfect unison.
In some sense, the closer $\abs{\rho_1(\theta)}$ is to $1$, the closer the state is to the all-in-phase state.
Moreover, the phase of $\rho_1(\theta)$ indicates the average phase of the oscillators.
Since our Kuramoto model is invariant to global rotation of the phases, we can shift the phases so that $\rho_1(\theta)$ is real and non-negative.
For the rest of this paper, we will assume that this has been done and $\rho_1(\theta)$ is real-valued.

The higher order parameters $\rho_k(\theta)$ for $k \geq 2$ also encode useful information.
If $\abs{\rho_k(\theta)} = 1$, then the phases of $\theta$ are all $k$-th roots of unity under some global shift.
Below, we explore the relationship between $\rho_1(\theta)$ and $\rho_2(\theta)$ for a stable state $\theta$.
These estimates will be of key importance in our proof.
Indeed, the \indef{amplification argument}, which is the core technique of our paper, derives a contradiction to these conditions from any unsynchronized stable state, which allows us to show global synchrony for a large class of expander and random graphs.

We assume now that $G$ is a connected graph and that a global shift of phases has been performed so that $\rho_1(\theta)$ is real and non-negative.
We further assume that the phases are all represented in the interval $(-\pi, \pi]$.

The following two lemmas are contained in the previous work of Kassabov, Strogatz, and Townsend \cite{Kassabov2021-rg}*{Lemma 1 and Lemma 2}.
We include the proof in the interest of keeping this paper self-contained.

\begin{lemma}
\label{lem:rho1-bound}
Let $\theta$ be a stable state on an $(n, \degm, \anorm)$-expander graph $G$.
Then
\begin{equation}
\label{eq:rho1-inequality}
\rho_1^2(\theta) \geq \frac{1 + \abs{\rho_2(\theta)}^2}{2} - 2 \anorm.
\end{equation}
In the case of $\degm$-regular graphs the above bound can be improved slightly to
\begin{equation}
\label{eq:rho1-inequality-regular}
\rho_1^2(\theta) \geq \frac{1 + \abs{\rho_2(\theta)}^2}{2} - \frac{3}{2} \anorm.
\end{equation}
\end{lemma}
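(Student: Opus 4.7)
The plan is to derive the inequality by viewing $A$ as a perturbation of the rank-one matrix $(\degm/n)J$ and extracting the Daido parameters from the cosine stability condition~\eqref{eq:cosine-condition}. Expanding $\cos\alpha(1-\cos\alpha) = \cos\alpha - (1+\cos 2\alpha)/2$ in that inequality gives
\begin{equation*}
\sum_{x,y \in V} A_{x,y} \cos(\theta_x - \theta_y) \;\geq\; \tfrac{1}{2}\sum_{x,y \in V} A_{x,y} \;+\; \tfrac{1}{2}\sum_{x,y \in V} A_{x,y} \cos(2\theta_x - 2\theta_y).
\end{equation*}
Setting $f(x) = e^{i\theta_x}$ and $g(x) = e^{2i\theta_x}$, the three sums are exactly the real Hermitian forms $\inner{f}{Af}$, $\inner{\ind}{A\ind}$ and $\inner{g}{Ag}$.

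Next I would split $A = (\degm/n)J + \Delta_A$ and evaluate each form. The $J$-contributions recover the order parameters: using $Jf = n\rho_1\ind$ (valid since $\rho_1$ is real and non-negative by the rotation convention) and $Jg = n\rho_2\ind$, one finds $(\degm/n)\inner{f}{Jf} = n\degm\,\rho_1^2$, $(\degm/n)\inner{g}{Jg} = n\degm\,\abs{\rho_2}^2$, and $(\degm/n)\inner{\ind}{J\ind} = n\degm$. The $\Delta_A$-contributions are the error terms: since $\norm{f}^2 = \norm{g}^2 = \norm{\ind}^2 = n$, the operator-norm bound gives $\abs{\inner{f}{\Delta_A f}}$, $\abs{\inner{g}{\Delta_A g}}$ and $\abs{\inner{\ind}{\Delta_A\ind}}$ all at most $\norm{\Delta_A}\,n \leq \anorm\degm n$.

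Substituting these into the expanded stability inequality and dividing by $n\degm$ gives
\begin{equation*}
\rho_1^2 \;\geq\; \frac{1 + \abs{\rho_2}^2}{2} \;+\; \frac{1}{2n\degm}\bigl(\inner{\ind}{\Delta_A\ind} + \inner{g}{\Delta_A g} - 2\inner{f}{\Delta_A f}\bigr).
\end{equation*}
In the worst case the bracketed quantity is at least $-4\anorm\degm n$, yielding $\rho_1^2 \geq (1+\abs{\rho_2}^2)/2 - 2\anorm$, which is~\eqref{eq:rho1-inequality}. For the $\degm$-regular case, the key observation is that $A\ind = \degm\ind$ forces $\Delta_A\ind = 0$, so the $\inner{\ind}{\Delta_A\ind}$ term drops out entirely; only two error contributions remain, worst-case $-3\anorm\degm n$, sharpening the estimate to~\eqref{eq:rho1-inequality-regular}. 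I do not foresee any real obstacle here: the argument reduces to a direct algebraic manipulation combined with the single spectral inequality $\norm{\Delta_A} \leq \anorm\degm$, with the only bookkeeping being to orient the signs of the $\Delta_A$-quadratic forms in the pessimistic direction and to use the convention making $\rho_1$ real so that $\rho_1^2 = \abs{\rho_1}^2$.
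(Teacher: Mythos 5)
Your proposal is correct and follows essentially the same route as the paper: the cosine stability condition combined with the double-angle identity, the decomposition $A = (\degm/n)J + \Delta_A$, extraction of $\rho_1$, $\rho_2$ from the $J$-quadratic forms, and the operator-norm bound on $\Delta_A$ for the three error terms, yielding the $-2\anorm$ deficit with coefficients $1+1+2$. For the regular case your observation that $\Delta_A\ind = 0$ kills the $\inner{\ind}{\Delta_A\ind}$ term is exactly the paper's use of $\inner{\ind}{A\ind} = \degm n$ when $|E| = \degm n/2$.
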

\begin{proof}
Let $f \from V \to \CC$ be the function $f(x) = e^{i \theta_x}$.
Then
\begin{equation*}
	\inner{f}{Af} = (\degm/n) \inner{f}{Jf}  + \inner{f}{\Delta_A f}.
\end{equation*}
Since $\inner{f}{Jf} = n^2 \abs{\rho_1(\theta)}^2$ and
$\abs{\inner{f}{\Delta_A f}} \leq \norm{\Delta_A} \norm{f}^2$, we obtain that
\begin{equation*}
\abs[\Big]{\inner{f}{Af} - \degm n \abs{\rho_1(\theta)}^2 } \leq \anorm \degm n,
\end{equation*}
as $\norm{\Delta_A} \leq \anorm \degm$ and $\norm{f}^2 = n$.
Moreover, the equilibrium condition~\eqref{eq:equilibrium} gives
\begin{equation*}
	\inner{f}{Af} = \sum_{x,y \in V} A_{x,y} \cos(\theta_x -\theta_y).
\end{equation*}
Therefore, we get
\begin{equation*}
	\sum_{x,y \in V} A_{x,y} \cos(\theta_x -\theta_y) \leq  \degm n \abs{\rho_1(\theta)}^2 + \anorm \degm n.
\end{equation*}
We repeat the same arguments for $g, h \from V \to \CC$ given by $g(x) = e^{i 2\theta_x}$ and $h(x) = 1$,
\begin{equation*}
	\abs[\Big]{ \inner{g}{Ag} - \degm n \abs{\rho_2(\theta)}^2 } \leq \anorm \degm n,
	\quad \text{and} \quad
	\abs[\Big]{ \inner{h}{Ah} - \degm n } \leq \anorm \degm n,
\end{equation*}
which leads to
\begin{equation*}
	\sum_{x,y \in V} A_{x,y} \cos(2\theta_x -2\theta_y) \geq \degm n \abs{\rho_2(\theta)}^2 - \anorm \degm n,
	\quad \text{and} \quad
	\sum_{x,y \in V} A_{x,y} \geq \degm n  - \anorm \degm n.
\end{equation*}
We use the elementary identity $\cos(2\theta_x -2\theta_y)= 2\cos^2(\theta_x -\theta_y)-1$ to obtain that
\begin{align*}
	\sum_{x,y \in V} A_{x,y} \cos^2(\theta_x -\theta_y)
	&= \frac{1}{2} \sum_{x,y \in V} A_{x,y} \p[\big]{\cos(2\theta_x - 2\theta_y) + 1} \\
	&= \frac{1}{2}\sum_{x,y \in V} A_{x,y} \cos(2\theta_x -2\theta_y) + \frac{1}{2} \sum_{x,y \in V} A_{x,y} \\
	&\geq \frac{1}{2} \p[\big]{ \degm n \abs{\rho_2(\theta)}^2 - \anorm \degm n} + \frac{1}{2} \p[\big]{ \degm n  - \anorm \degm n} \\
	&= \degm n \p[\Big]{\frac{1 + \abs{\rho_2(\theta)}^2}{2}} - \anorm \degm n.
\end{align*}
Recall that the stability condition gives~\eqref{eq:cosine-condition}, which is equivalent to
\begin{equation*}
	\sum_{x,y \in V} A_{x,y} \cos(\theta_x - \theta_y) \geq \sum_{x,y \in V} A_{x,y} \cos^2(\theta_x - \theta_y).
\end{equation*}
Combining the bounds we obtained, we have
\begin{equation*}
	\degm n \rho_1^2(\theta) + \anorm \degm n
	\geq \degm n \p[\Big]{\frac{1 + \abs{\rho_2(\theta)}^2}{2}} - \anorm \degm n,
\end{equation*}
which simplifies to the claimed inequality~\eqref{eq:rho1-inequality}.

If the number of edges $\card{E}$ is equal to $\degm n/2$ (for example, if the graph is $\degm$-regular), then we use $\inner{h}{Ah} = \degm n$ instead of $\abs[\big]{ \inner{h}{Ah}  - \degm n } \leq \anorm \degm n$, which leads to the improved inequality~\eqref{eq:rho1-inequality-regular}.
\end{proof}

\begin{corollary}
\label{cor:rho1-not-zero}
Let $\theta$ be a stable state on an $(n, \degm, \anorm)$-expander graph $G$.
If $\anorm < 1/4$, then $\rho_1(\theta) > 0$.
\end{corollary}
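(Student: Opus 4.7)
The plan is to apply \Cref{lem:rho1-bound} directly and observe that the right-hand side of~\eqref{eq:rho1-inequality} is strictly positive under the hypothesis $\anorm < 1/4$. Specifically, since $\abs{\rho_2(\theta)}^2 \geq 0$, the bound gives
\begin{equation*}
    \rho_1^2(\theta) \geq \frac{1 + \abs{\rho_2(\theta)}^2}{2} - 2\anorm \geq \frac{1}{2} - 2\anorm,
\end{equation*}
and the last expression is strictly positive exactly when $\anorm < 1/4$. Hence $\rho_1(\theta) \neq 0$.

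To conclude that $\rho_1(\theta)$ is strictly positive rather than merely nonzero, I would invoke the convention established just before \Cref{lem:rho1-bound}: for a connected graph, one can perform a global phase shift (which leaves the energy $\cE_G$, and hence the notions of equilibrium and stability, invariant) so that $\rho_1(\theta)$ is real and non-negative. Combined with $\rho_1(\theta) \neq 0$, this yields $\rho_1(\theta) > 0$.

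There is essentially no obstacle here; the corollary is a direct numerical consequence of \Cref{lem:rho1-bound}, serving to rule out the degenerate scenario where the oscillators are perfectly balanced around the unit circle (which would make many of the subsequent amplification arguments vacuous). The only subtlety worth flagging is the reliance on the global rotation convention, which requires connectivity of $G$ — but this is implicit in the expander hypothesis, since $\anorm < 1/4 < 1$ forces the second largest eigenvalue of the normalised adjacency operator to be bounded away from $1$.
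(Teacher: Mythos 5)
Your proof is correct and is exactly the intended derivation: the paper states the corollary immediately after Lemma~\ref{lem:rho1-bound} without separate proof, and the expected argument is precisely to drop the $\abs{\rho_2(\theta)}^2$ term in~\eqref{eq:rho1-inequality} to get $\rho_1^2(\theta) \geq 1/2 - 2\anorm > 0$ when $\anorm < 1/4$, then invoke the global-rotation normalisation to upgrade $\rho_1 \neq 0$ to $\rho_1 > 0$. One small correction to your closing remark: choosing a global phase shift so that $\rho_1(\theta) \in [0,1]$ does not actually require connectivity of $G$ --- it is always available, since $\cE_G$ is invariant under rotating all phases by the same angle, regardless of the graph's structure. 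Connectivity matters for other parts of the argument (e.g.\ the half-circle lemma), but not for this normalisation, so there is no need to deduce it from the expander hypothesis here.
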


For the next lemma, it will be convenient to introduce the following function
\begin{equation*}
	\Sin(\theta) \defined \begin{cases}
		\sin^2(\abs{\theta}) & \text{if } \abs{\theta} \leq \pi/2, \\
		1 &  \text{if } \pi/2 \leq \abs{\theta} \leq \pi.
	\end{cases}
\end{equation*}
We remark that $\Sin(\theta) \geq \sin^2(\abs{\theta}) = \abs{\sin(\theta)}^2 = \sin^2(\theta)$ and that $\Sin(\theta)$ is increasing on the interval $[0, \pi]$.

\begin{figure}[!ht]
\centering
\begin{tikzpicture}[scale=0.8]
    \begin{axis}[
        axis x line* = middle,
        axis y line* = middle,
        axis line style={-stealth},
        ymin=-0.1,
        ymax=1.1,
        xmin=-pi-0.5,
        xmax=pi+0.4,
        xtick = {-3.1415, -1.5707, 0, 1.5707, 3.1415},
        xticklabels = {$-\pi$, $-\pi/2$, $0$, $\pi/2$, $\pi$},
        every tick/.append style={black, very thick},
        x tick label style = {xshift={0.5em}},
        xlabel={$\theta$},
        xlabel style={at={(ticklabel* cs:1)}, anchor=south west},
        ytick = {-1, 0 , 1},
        yticklabels = {-1, 0 , 1},
        grid=major,
        grid style={dashed, gray},
        width = 8cm,
   ]
	\addplot [domain=-3.1415:-1.5707, black, ultra thick]
	{1};
	\addplot [domain=1.5707:3.1415, black, ultra thick]
	{1};
	\addplot [domain=-1.5707:1.5707, black, ultra thick, samples=200]
	{sin(abs(x*180/pi))^2};
\end{axis}
\end{tikzpicture}
\caption{The function $\Sin \from (-\pi, \pi] \to \RR$.}
\label{fig:function-sine}
\end{figure}

\begin{lemma}
\label{lem:rho1-sin}
Let $\theta$ be a stable state on an $(n, \degm, \anorm)$-expander graph $G$.
Then provided that $\rho_1(\theta) $ is real and non-negative, we have
\begin{equation*}
	n \anorm^2 \geq \rho_1^2(\theta) \sum_{x\in V}  \Sin(\theta_x).
\end{equation*}
\end{lemma}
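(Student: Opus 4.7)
The plan is to apply the operator norm bound $\|\Delta_A\| \le \anorm \degm$ to the complex test function $f(x) = e^{i\theta_x}$ and extract the bound by carefully decomposing $|(\Delta_A f)(y)|^2$ pointwise.

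First I would use the equilibrium condition \eqref{eq:equilibrium-complex}, which gives $(Af)(y) = r_y e^{i\theta_y}$ with $r_y \ge 0$ by \eqref{eq:ry-positivity}. Since $\rho_1(\theta)$ is real and non-negative by hypothesis, $(Jf)(y) = \sum_{x \in V} e^{i\theta_x} = n\rho_1(\theta)$ is a real constant, so
\begin{equation*}
    (\Delta_A f)(y) = r_y e^{i\theta_y} - \degm \rho_1(\theta).
\end{equation*}
Expanding the squared modulus yields
\begin{equation*}
    \abs{(\Delta_A f)(y)}^2 = \p[\big]{r_y - \degm \rho_1(\theta) \cos(\theta_y)}^2 + \degm^2 \rho_1^2(\theta) \sin^2(\theta_y).
\end{equation*}

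The main step is to show that this is bounded below by $\degm^2 \rho_1^2(\theta)\, \Sin^2(\theta_y)$ for every $y \in V$. I would split into two cases according to the definition of $\Sin$. For $\abs{\theta_y} \le \pi/2$, we have $\Sin^2(\theta_y) = \sin^2(\theta_y)$, so the bound follows by dropping the first non-negative term. For $\pi/2 \le \abs{\theta_y} \le \pi$, we have $\cos(\theta_y) \le 0$ and $r_y \ge 0$, so $r_y - \degm \rho_1(\theta) \cos(\theta_y) \ge -\degm \rho_1(\theta) \cos(\theta_y)$, which has absolute value at least $\degm \rho_1(\theta) \abs{\cos(\theta_y)}$. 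Adding $\degm^2 \rho_1^2(\theta) \sin^2(\theta_y)$ and using $\sin^2 + \cos^2 = 1$ yields $\abs{(\Delta_A f)(y)}^2 \ge \degm^2 \rho_1^2(\theta) = \degm^2 \rho_1^2(\theta) \Sin^2(\theta_y)$, as required.

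Summing the pointwise bound over $y \in V$ gives
\begin{equation*}
    \norm{\Delta_A f}^2 \ge \degm^2 \rho_1^2(\theta) \sum_{x \in V} \Sin^2(\theta_x),
\end{equation*}
while the expander hypothesis together with $\norm{f}^2 = n$ gives $\norm{\Delta_A f}^2 \le \anorm^2 \degm^2 n$. Dividing by $\degm^2$ yields the claim. The only subtle point is the case analysis for $\abs{\theta_y} \ge \pi/2$, where one must exploit the sign of $\cos(\theta_y)$ together with $r_y \ge 0$ to recover the full value $\Sin^2(\theta_y) = 1$; otherwise a naive application of $\sin^2 \le \Sin^2$ goes the wrong way.
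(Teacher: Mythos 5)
Your proof is correct and takes essentially the same approach as the paper: both use the decomposition $(\Delta_A f)(y) = r_y e^{i\theta_y} - \degm\rho_1(\theta)$ with $r_y \geq 0$ from the stability condition, bound $\abs{(\Delta_A f)(y)}$ below by $\degm\rho_1(\theta)\,\Sin(\theta_y)$ pointwise (exploiting the sign of $\cos\theta_y$ when $\abs{\theta_y} \geq \pi/2$), and then compare against $\norm{\Delta_A f}^2 \leq \anorm^2\degm^2 n$. The only difference is cosmetic: the paper separates real and imaginary parts of $e^{-i\theta_y}(\Delta_A f)(y)$ and reassembles them, whereas you compute the squared modulus directly and then do the same two-case analysis.
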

\begin{proof}
Recall~\eqref{eq:ry-positivity},
where we observed that the stability conditions of $\theta$ implied that for every $y \in V$,
there is $r_y \geq 0$ such that $\sum_{x\in V} A_{x,y} e^{i \theta_x} = r_y e^{i \theta_y}$.
Recall that $A  = (\degm/n)J + \Delta_A$, thus
\begin{equation*}
	r_y = \degm \rho_1(\theta) e^{-i \theta_y} + \sum_{x \in V} (\Delta_A)_{x,y} e^{i (\theta_x - \theta_y)} \geq 0.
\end{equation*}
Taking the imaginary part on both sides, we obtain
\begin{equation*}
	\sum_{x \in V} (\Delta_A)_{x,y} \sin(\theta_x - \theta_y) = \degm \rho_1(\theta) \sin(\theta_y).
\end{equation*}
Now, by taking real parts, we obtain
\begin{equation*}
	\sum_{x \in V} (\Delta_A)_{x,y} \cos(\theta_x - \theta_y)
	\geq  - \degm \rho_1(\theta) \cos(\theta_y),
\end{equation*}
which implies that
\begin{equation*}
	\abs[\Big]{\sum_{x \in V} (\Delta_A)_{x,y} \cos(\theta_x - \theta_y)}
	\geq \degm \rho_1(\theta) \cos(\theta_y) \ind_{\set{ \pi/2 \leq \abs{\theta_y} \leq \pi }}.
\end{equation*}
Let $f \from V \to \CC$ be defined as $f(x) = e^{i (\theta_x - \theta_y)}$.
We have
\begin{align*}
	\abs[\big]{(\Delta_A f)_y}^2 &= \abs[\Big]{\sum_{x \in V} (\Delta_A)_{x,y}f(x)}^2 \\
	&= \p[\Big]{\sum_{x \in V} (\Delta_A)_{x,y}\cos(\theta_x - \theta_y)}^2
	+ \p[\Big]{\sum_{x \in V} (\Delta_A)_{x,y}\sin(\theta_x - \theta_y)}^2 \\
	&\geq \p[\big]{\degm \rho_1(\theta)}^2 \abs{\cos(\theta_y)}^2 \ind_{\set{ \pi/2 \leq \abs{\theta_y} \leq \pi }}
	+ \p[\big]{\degm \rho_1(\theta)}^2 \sin^2(\theta_y) \\
	&= \p[\big]{\degm \rho_1(\theta)}^2 \Sin(\theta_y).
\end{align*}
Therefore,
\begin{equation*}
	\norm[\big]{\Delta_A f}_ 2^2 = \sum_{y \in V} \abs[\big]{(\Delta_A f)_y}^2 \geq \p[\big]{\degm \rho_1(\theta)}^2 \sum_{y \in V} \Sin(\theta_y).
\end{equation*}
Together with $\norm{\Delta_A f}_ 2^2 \leq \norm{\Delta_A}^2 \norm{f}^2_ 2 \leq (\anorm \degm)^2 n$, we obtain the claimed inequality.
\end{proof}

\begin{corollary}
\label{cor:rho2-bound}
Let $\theta$ be a stable state on an $(n, \degm, \anorm)$-expander graph $G$.
Then provided that $\rho_1(\theta) $ is real and positive, we have
\begin{equation}
\label{eq:better-rho2-inequality}
	\abs{\rho_2(\theta)} \geq 1 - \frac{2 \anorm^2}{\rho_1^2(\theta)}.
\end{equation}
\end{corollary}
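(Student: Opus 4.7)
The plan is to chain the bound in \Cref{lem:rho1-sin} together with an identity relating $\rho_2$ to $\sum_x \sin^2(\theta_x)$. The key observation is that $\abs{\rho_2(\theta)} \geq \Re \rho_2(\theta)$, and expanding
\[
	\Re \rho_2(\theta) = \frac{1}{n} \sum_{x \in V} \cos(2 \theta_x) = 1 - \frac{2}{n} \sum_{x \in V} \sin^2(\theta_x)
\]
by the double-angle identity converts the question into a bound on $\sum_x \sin^2(\theta_x)$.

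Next, I would compare $\sin^2$ with $\Sin^2$. By the defining property of the function $\Sin$ noted just after its definition, $\Sin(\theta) \geq \abs{\sin(\theta)}$ on $(-\pi, \pi]$, so $\sin^2(\theta_x) \leq \Sin^2(\theta_x)$ pointwise. Combining this with the previous display gives
\[
	\Re \rho_2(\theta) \geq 1 - \frac{2}{n} \sum_{x \in V} \Sin^2(\theta_x).
\]

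Finally, since $\rho_1(\theta)$ is real and positive by assumption, \Cref{lem:rho1-sin} yields $\sum_{x \in V} \Sin^2(\theta_x) \leq n \anorm^2 / \rho_1^2(\theta)$. Substituting this in produces
\[
	\abs{\rho_2(\theta)} \geq \Re \rho_2(\theta) \geq 1 - \frac{2 \anorm^2}{\rho_1^2(\theta)},
\]
which is the desired inequality; when the right-hand side is negative the bound is trivial. There is no real obstacle here: the whole argument is just a chain of elementary inequalities once one notices that the double-angle identity transforms $\rho_2$ into something to which \Cref{lem:rho1-sin} directly applies.
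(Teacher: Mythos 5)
Your proof is correct and is essentially identical to the paper's: both pass through $\abs{\rho_2} \geq \Re(\rho_2) = 1 - \tfrac{2}{n}\sum_x \sin^2(\theta_x)$, upgrade $\sin^2 \leq \Sin^2$, and finish with \Cref{lem:rho1-sin}. Nothing to add.
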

\begin{proof}
Since $\sin^2(\theta_x) \leq \Sin(\theta_x)$, we have
\begin{align*}
	\abs{\rho_2(\theta)}
	&\geq \Re\p[\big]{\rho_2(\theta)}
	= \frac{1}{n}\sum_{x \in V} \cos(2\theta_x) \\
	&= 1 - \frac{2}{n}\sum_{x \in V} \sin^2(\theta_x) \geq 1 - \frac{2}{n} \sum_{x \in V} \Sin(\theta_x).
\end{align*}
Applying \Cref{lem:rho1-sin} and that $\rho_1(\theta) \neq 0$, we obtain~\eqref{eq:better-rho2-inequality}.
\end{proof}

We now provide more tractable inequalities between the order parameters and $\anorm$.

\begin{lemma}
\label{lem:rhobounds}
Let $\theta$ be a stable state in an $(n, \degm, \anorm)$-expander graph.
If $\anorm \leq 1/5$, then
\begin{equation*}
	\rho_1^2(\theta) \geq  1 - 3 \anorm,
	\quad \text{ and } \quad
	\abs{\rho_2(\theta)} \geq 1 - 5\anorm^2.
\end{equation*}
\end{lemma}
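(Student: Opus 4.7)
Since $\anorm \leq 1/5 < 1/4$, \Cref{cor:rho1-not-zero} gives $\rho_1(\theta) > 0$, so set $u \defined \rho_1^2(\theta) > 0$ and $v \defined 1 - \abs{\rho_2(\theta)} \in [0, 1]$; the two claims become $u \geq 1 - 3\anorm$ and $v \leq 5\anorm^2$. In these variables, \Cref{lem:rho1-bound} reads $u \geq 1 - 2\anorm - v + v^2/2$ and \Cref{cor:rho2-bound} reads $uv \leq 2\anorm^2$. The bound $v \leq 5\anorm^2$ already implies the bound on $u$: since $5\anorm^2 \leq \anorm$ for $\anorm \leq 1/5$, the first inequality yields $u \geq 1 - 2\anorm - v \geq 1 - 3\anorm$. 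So the entire proof reduces to establishing $v \leq 5\anorm^2$.

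Multiplying the rho1-bound by $v \geq 0$ and invoking the rho2-bound produces the cubic inequality
\begin{equation*}
q(v) \defined \frac{v^3}{2} - v^2 + (1-2\anorm)v - 2\anorm^2 \leq 0.
\end{equation*}
Write $v_\pm \defined \bigl((1-2\anorm) \pm \sqrt{1 - 4\anorm - 4\anorm^2}\bigr)/2$ for the roots of $v^2 - (1-2\anorm)v + 2\anorm^2 = 0$; these are real (discriminant at least $1/25$) and positive (sum $1 - 2\anorm > 0$, product $2\anorm^2 > 0$) on $\anorm \in (0, 1/5]$. Matching coefficients yields the key identity
\begin{equation*}
q(v) = \frac{v^3}{2} - (v - v_-)(v - v_+),
\end{equation*}
giving $q(v_\pm) = v_\pm^3/2 > 0$, and on $[v_-, v_+]$ the quadratic factor is non-positive so $q(v) \geq v^3/2 > 0$. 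Direct algebra also shows $v_- \leq 5\anorm^2 \leq v_+$ on this range of $\anorm$: the left inequality, after squaring, reduces to $25\anorm^2 + 10\anorm - 3 \leq 0$, which is tight at $\anorm = 1/5$; the right is immediate from $v_+ \geq (1 - 2\anorm)/2 \geq 3/10$. Since $q(0) = -2\anorm^2 < 0$, the smallest real root of $q$ lies in $(0, v_-] \subseteq (0, 5\anorm^2]$.

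The main obstacle is ruling out a second region where $q \leq 0$ to the right of $v_+$, i.e., verifying that $q(v) > 0$ throughout $[v_+, 1]$. For $\anorm \leq 1/6$ this is automatic: $q'(v) = \frac{3}{2} v^2 - 2v + (1 - 2\anorm)$ has non-positive discriminant $12\anorm - 2$, so $q$ is monotone increasing. For $\anorm \in (1/6, 1/5]$ the cubic $q$ acquires a local minimum at $(2 + \sqrt{12\anorm - 2})/3 \in (v_+, 1)$, and one must verify by direct algebraic computation that this local minimum is strictly positive on the whole range; it equals $5/54$ at $\anorm = 1/6$ and decreases monotonically to roughly $0.014$ at $\anorm = 1/5$. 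With this in hand, $\set{v \in [0, 1] \st q(v) \leq 0}$ is an initial interval of $[0, 1]$ contained in $[0, 5\anorm^2]$, which concludes the proof.
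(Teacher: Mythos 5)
Your proof is correct, and it takes a genuinely different route from the paper's. The paper bootstraps an increasing sequence of lower bounds $a_k$ and $b_k$ converging to a fixed point, which satisfies a cubic in $a$ (shown to have a unique real root via its discriminant, then solved numerically at $\anorm = 0.2$). You instead combine \Cref{lem:rho1-bound} and \Cref{cor:rho2-bound} in one stroke to obtain the cubic inequality $q(v) \leq 0$ for $v = 1 - \abs{\rho_2(\theta)}$, then analyse the sublevel set $\set{q \leq 0} \cap [0,1]$ directly. It is worth noting that the two cubics are literally the same fixed-point equation under the substitution $a = 2\anorm^2/v$; what you gain is that you avoid the iteration/convergence machinery entirely and get an honest inequality on $v$ rather than on a limiting lower bound. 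What you pay for is the extra case analysis on $[v_+, 1]$: the paper's discriminant computation handles uniqueness uniformly, whereas you need to split at $\anorm = 1/6$ and check a local minimum. That last step is asserted rather than proved, but it is easily completed: since $q'(v_{\min}) = 0$, the derivative in $\anorm$ of the local-minimum value is just $\partial q / \partial \anorm$ evaluated there, namely $-2 v_{\min} - 4\anorm < 0$, giving the monotonicity, and a short computation shows $q(v_{\min})|_{\anorm = 1/5} = 2(8 - \sqrt{10})/675 > 0$, which is exactly on par with the paper's numerical step $a \geq 0.431$. One final pedantic remark: your argument implicitly assumes $\anorm > 0$ (so $v_- > 0$ and $q(0) < 0$); the degenerate case $\anorm = 0$ still works via $q(v) = v(v^2/2 - v + 1)$ with strictly positive quadratic factor, forcing $v = 0$.
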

\begin{proof}
To start, we observe that the condition that $\anorm \leq 1/5 < (\sqrt{2} - 1)/2$ implies that $\rho_1(\theta) \neq 0$ via \Cref{cor:rho1-not-zero} and that the lower bound~\eqref{eq:better-rho2-inequality} is always positive.
Indeed, if we had $1 - 2 \anorm^2 / \rho_1^2(\theta) \leq 0$, then by~\eqref{eq:rho1-inequality}, we would have
\begin{equation*}
    2 \anorm^2 \geq \rho_1^2(\theta) \geq \frac{1 + \abs{\rho_2(\theta)}^2}{2} - 2\anorm \geq \frac{1 - 4 \anorm}{2}.
\end{equation*}
This translates to $4 \anorm^2 + 4\anorm - 1 \geq 0$, which implies $\anorm \geq (\sqrt{2} - 1)/2$ when $\anorm \geq 0$.

Now, we use \eqref{eq:rho1-inequality} and \eqref{eq:better-rho2-inequality} to obtain
\begin{equation}
\label{eq:cubic-ineq}
    2\rho_1^2(\theta) + 4\anorm -1 \geq \p[\Big]{1 - \frac{2\anorm^2}{\rho_1^2(\theta)} }^2.
\end{equation}
We define the function
\begin{align*}
    f(x, \anorm) = 2x + 4\anorm-1 - \p[\big]{1 - 2\anorm^2 / x}^2,
\end{align*}
and note that \eqref{eq:cubic-ineq} is equivalent to $f(\rho_1^2(\theta), \anorm) \geq 0$.
To obtain said inequality, we can analyze $f(x, \anorm)$ under the restriction that $2 \anorm^2 / x < 1$ as we have shown it holds when $x = \rho_1^2(\theta)$.
Note that $f(x,\anorm) = 0$ is a cubic equation on $x$ with coefficients depending on $\anorm$ given by
\begin{equation}
\label{eq:cubic}
	x^3 + (2 \anorm - 1) x^2 + 2 \anorm^2 x - 2 \anorm^4 = 0.
\end{equation}
The discriminant of the cubic is
\begin{equation*}
	\Delta = - 4 \anorm^4 \p[\big]{27 \anorm^4 + 20\anorm^3 + 10\anorm^2 - 8\anorm + 1}.
\end{equation*}
In particular, $\Delta < 0$ for any $\anorm < 0.2055$.
As we assume that $\anorm < 1/5$, the equation $f(\xi, \anorm) = 0$ has a unique real root $\xi$.
Note that $f(x,\anorm) \to \infty$ for each fixed $\anorm$ when $x \to \infty$.
On the other hand, it can be easily checked via a quadratic equation that $f(0.431, \anorm) < 0$ for all $\anorm < 1/5$.
Hence the root $\xi$ of must satisfy $\xi \geq 0.431$ in this range.
Therefore, we have $\rho_1^2(\theta) \geq 0.431$, which leads to $\abs{\rho_2(\theta)} \geq 1 - 2 \anorm^2/0.431 \geq 1 - 5 \anorm^2$ via~\eqref{eq:better-rho2-inequality}.
Finally, when $\anorm \leq 1/5$, \eqref{eq:rho1-inequality} gives
\begin{equation*}
	\rho_1^2(\theta) \geq \frac{1 + (1 - 5\anorm^2)^2 - 4\anorm}{2} \geq 1 - 3\anorm. \qedhere
\end{equation*}
\end{proof}

Finally, we can put the previous inequality back in terms of sums of $\Sin(\theta_x)$.

\begin{corollary}
\label{cor:sin-bound}
Let $\theta$ be a stable state in an $(n, \degm, \anorm)$-expander graph.
If $\anorm \leq 1/5$, then
\begin{equation*}
	\frac{1}{n} \sum_{x\in V} \Sin(\theta_x) \leq \frac{5 \anorm^2}{2}.
\end{equation*}
\end{corollary}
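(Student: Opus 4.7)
The plan is to combine the two preceding ingredients directly: \Cref{lem:rho1-sin} already gives the estimate
\[
    \frac{1}{n} \sum_{x \in V} \Sin^2(\theta_x) \leq \frac{\anorm^2}{\rho_1^2(\theta)},
\]
so all that remains is to bound $1/\rho_1^2(\theta)$ from above. For this I would invoke the first inequality of \Cref{lem:rhobounds}, namely $\rho_1^2(\theta) \geq 1 - 3\anorm$, which is valid precisely in the regime $\anorm \leq 1/5$ assumed here.

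Combining these two bounds yields
\[
    \frac{1}{n} \sum_{x \in V} \Sin^2(\theta_x) \leq \frac{\anorm^2}{1 - 3\anorm}.
\]
The final numerical simplification comes from the fact that the map $\anorm \mapsto (1 - 3\anorm)^{-1}$ is monotone increasing on $[0, 1/3)$, so under $\anorm \leq 1/5$ we have $1 - 3\anorm \geq 2/5$, hence $(1 - 3\anorm)^{-1} \leq 5/2$, giving the claimed inequality.

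There is essentially no obstacle: the work has already been done in \Cref{lem:rho1-sin} (which supplied the $\Sin^2$-to-$\rho_1^2$ step) and in \Cref{lem:rhobounds} (which handled the delicate cubic analysis bounding $\rho_1^2$ away from zero). The corollary is really just a bookkeeping step that packages these two estimates in a form convenient for the subsequent amplification argument in \Cref{sec:amplification}, where the $\Sin^2$ averages will be the natural quantities to control.
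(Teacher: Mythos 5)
Your proposal is correct and follows exactly the same route as the paper: apply Lemma~\ref{lem:rho1-sin} to bound $\frac{1}{n}\sum_x \Sin^2(\theta_x)$ by $\anorm^2/\rho_1^2(\theta)$, then use the bound $\rho_1^2(\theta) \geq 1 - 3\anorm \geq 2/5$ from Lemma~\ref{lem:rhobounds} to finish. This matches the paper's two-line proof verbatim in substance.
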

\begin{proof}
By \Cref{lem:rhobounds}, we have $\rho_1^2(\theta) \geq  1 - 3 \anorm \geq 2/5$, as $\anorm < 1/5$.
The result follows from \Cref{lem:rho1-sin}.
\end{proof}

\begin{remark}
\label{rmk:regular}
The proof of \Cref{lem:rhobounds} can be easily adapted to show that every stable state $\theta$ on an $\degm$-regular $(n, \degm, \anorm)$-expander satisfies $\rho_1^2(\theta) \geq 1- 2\anorm$ when $\anorm \leq 0.1$.
The proof is identical, except we use \eqref{eq:rho1-inequality-regular} instead of \eqref{eq:rho1-inequality} and keep track of the new numbers, the relevant cubic being now $2x^3 + (3\anorm - 2)x^2 + 4\anorm^2 x + 4 \anorm^4$.
Similarly, the same conclusions of \Cref{lem:rhobounds} hold for a $\degm$-regular expander when $\anorm \leq 0.24585$.
Finally, the proof of \Cref{cor:sin-bound} gives that $\frac{1}{n} \sum_{x\in V} \Sin(\theta_x) \leq 5 \anorm^2/4$ for a $\degm$-regular $(n, \degm, \anorm)$-expander.
\end{remark}

\subsection{Kernel stability condition and the half-circle lemma}

Another useful consequence of the stability condition for a stable state $\theta$ is the following inequality that relates the state with an appropriate kernel.
Define the \indef{kernel} $K \from (-\pi,\pi] \times (-\pi,\pi] \to \RR$ as
\begin{equation}
\label{eq:kernel-definition}
	K(\alpha, \beta)
	= \sin\p[\big]{ \abs{\alpha} - \min\set{ \abs{\beta}, \pi/2 } }
	= \begin{cases}
		\sin(\abs{\alpha} - \abs{\beta}), & \abs{\beta} \leq \frac{\pi}{2}, \\
		-\cos(\alpha), & \abs{\beta} \geq \frac{\pi}{2}.
	\end{cases}
\end{equation}

\begin{lemma}[Kernel stability condition]
\label{lem:kernel-stability}
Let $\theta \from V \to (-\pi,\pi]$ be a stable state such that $\rho_1(\theta) \in [0,1]$.
Then, for any $y \in V$, we have
\begin{equation}
\label{eq:kernel-inequality}
	\sum_{x\in V} A_{x,y} K(\theta_x, \theta_y) \geq 0.
\end{equation}
\end{lemma}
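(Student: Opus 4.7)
The plan is to split on whether $\abs{\theta_y} \leq \pi/2$ or $\abs{\theta_y} \geq \pi/2$, since the kernel $K$ has two different forms on these regimes, and to reduce each case to identities that have already been derived from the equilibrium and stability conditions.

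Setup: I would first recall from~\eqref{eq:equilibrium-complex} that for each $y \in V$ there exists $r_y \in \RR$ with $\sum_x A_{x,y} e^{i\theta_x} = r_y e^{i\theta_y}$, and from~\eqref{eq:ry-positivity} (a consequence of stability applied to the indicator $\ind_y$) that $r_y \geq 0$. Taking imaginary and real parts respectively yields $\sum_x A_{x,y}\sin(\theta_x - \theta_y) = 0$ and $\sum_x A_{x,y}\cos\theta_x = r_y\cos\theta_y$; these two identities are the only inputs I will need. For the easy case $\abs{\theta_y} \geq \pi/2$ the kernel reduces to $K(\theta_x,\theta_y) = -\cos\theta_x$, so the target inequality becomes $-r_y\cos\theta_y \geq 0$, which is immediate from $r_y \geq 0$ and $\cos\theta_y \leq 0$ on this range.

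The remaining case $\abs{\theta_y} \leq \pi/2$ is the heart of the argument. Here $K(\theta_x,\theta_y) = \sin(\abs{\theta_x} - \abs{\theta_y})$, and I would establish the pointwise comparison
\[
\sin(\abs{\theta_x} - \abs{\theta_y}) \geq \operatorname{sgn}(\theta_y)\sin(\theta_x - \theta_y),
\]
after which summing against $A_{x,y}$ and applying the equilibrium identity $\sum_x A_{x,y}\sin(\theta_x - \theta_y) = 0$ yields exactly $\sum_x A_{x,y} K(\theta_x,\theta_y) \geq 0$. To prove the pointwise bound I would expand both sides using the sine addition formula, noting that on $(-\pi,\pi]$ one has $\cos\abs{\cdot} = \cos$ and $\sin\abs{\cdot} = \abs{\sin\cdot}$, together with $\operatorname{sgn}(\theta_y)\sin\theta_y = \abs{\sin\theta_y}$. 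The difference of the two sides then simplifies to $\bigl(\abs{\sin\theta_x} - \operatorname{sgn}(\theta_y)\sin\theta_x\bigr)\cos\theta_y$, which is non-negative because the bracket is non-negative by $\abs{a} \geq \pm a$ and $\cos\theta_y \geq 0$ throughout the range $\abs{\theta_y} \leq \pi/2$.

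The main obstacle is identifying this precise trigonometric inequality: the absolute-value kernel on the left is structurally mismatched with the raw equilibrium information $\sum_x A_{x,y}\sin(\theta_x - \theta_y) = 0$ on the right, and the non-negativity of $\cos\theta_y$ in the small-angle regime is exactly what allows the two to be bridged by an inequality rather than an equality. Once this comparison is in hand, the lemma follows by linearity, and the large-angle case is essentially automatic from the signs of $r_y$ and $\cos\theta_y$.
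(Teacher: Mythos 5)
Your proof is correct and follows essentially the same route as the paper: the large-angle case $\abs{\theta_y}\geq\pi/2$ is handled identically via $r_y\geq 0$ and $\cos\theta_y\leq 0$, and the small-angle case is resolved by the same pointwise comparison of $K(\theta_x,\theta_y)$ against $\pm\sin(\theta_x-\theta_y)$ followed by the equilibrium identity. The only cosmetic difference is that you merge the paper's two subcases $\theta_y\geq 0$ and $\theta_y\leq 0$ into one via $\operatorname{sgn}(\theta_y)$, with the residual $\bigl(\abs{\sin\theta_x}-\operatorname{sgn}(\theta_y)\sin\theta_x\bigr)\cos\theta_y\geq 0$ doing the same work as the paper's two one-sided estimates.
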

\begin{proof}
Recall that the equilibrium condition~\eqref{eq:equilibrium-complex} gives that, for each $y \in V$, there is $r_y \in \RR$ with
\begin{equation*}
    \sum_{x\in V} A_{x,y} e^{i \theta_x} = r_y e^{i \theta_y},
\end{equation*}
and that the stability condition~\eqref{eq:ry-positivity} implies that $r_y \geq 0$.
We now divide our argument into three cases, depending on the value of $\theta_y$.
\begin{description}[leftmargin=0cm]
\item[Case 1 ($0 \leq \theta_y \leq \pi/2$)]
By noting that $\sin(\abs{\theta_x}) \geq  \sin (\theta_x)$, $\cos (\theta_y) \geq 0$ and $\cos(\abs{\theta_x}) = \cos(\theta_x)$, we obtain
\begin{align*}
	K(\theta_x, \theta_y)
	&= \sin(\abs{\theta_x} - \theta_y)
	= \sin(\abs{\theta_x}) \cos(\theta_y) - \cos(\abs{\theta_x}) \sin (\theta_y) \\
	&\geq \sin(\theta_x) \cos(\theta_y) - \cos(\theta_x) \sin(\theta_y)
	= \sin(\theta_x - \theta_y).
\end{align*}
Therefore,
\begin{equation*}
	\sum_{x\in V} A_{x,y} K(\theta_x, \theta_y)
	\geq \sum_{x\in V} A_{x,y} \sin(\theta_x - \theta_y)
	= \Im(r_y) = 0.
\end{equation*}

\item[Case 2 ($-\pi/2 \leq \theta_y \leq 0$)]
As before, observe that
\begin{align*}
	K(\theta_x, \theta_y)
	&= \sin(\abs{\theta_x} + \theta_y)
	= \sin(\abs{\theta_x}) \cos(\theta_y) + \cos (\abs{\theta_x}) \sin(\theta_y) \\
	&\geq -\sin(\theta_x) \cos(\theta_y) + \cos (\theta_x) \sin (\theta_y)
	= -\sin(\theta_x -\theta_y).
\end{align*}

\item[Case 3 ($\abs{\theta_y} \geq \pi/2$)]
In this case, $K(\theta_x, \theta_y) = -\cos(\theta_x)$
\begin{align*}
	\sum_{x\in V} A_{x,y} K(\theta_x, \theta_y)
	&= -\sum_{x\in V} A_{x,y} \cos(\theta_x) \\
	&= -\Re\p[\Big]{\sum_{x\in V} A_{x,y} e^{i \theta_x}}
	= - r_y \Re(e^{-i \theta_y}) \geq 0. \qedhere
\end{align*}
\end{description}
\end{proof}

We remark that such a kernel stability condition was already present in Lemma 7 of Kassabov, Strogatz, and Townsend~\cite{Kassabov2021-rg}, though they used a slightly different kernel.

To end this section, we present a well-known sufficient condition to guarantee a stable state is the all-in-phase equilibrium.
For completeness, we use the kernel stability condition to provide an alternative proof of this standard result~\cites{Jadbabaie2004-lf,Ochab2010-ps,Taylor2012-jc,Dorfler2014-kx}.

\begin{lemma}[Half-circle lemma]
\label{lem:half-circle}
Let $\theta$ be a stable state on a connected graph $G$ with $\rho_1(\theta) \in [0,1]$.
If $\abs{\theta_x} < \pi/2$ for every $x \in V$, then $\theta_x = 0$ for every $x \in V$.
\end{lemma}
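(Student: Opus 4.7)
The plan is to exploit the kernel stability condition (just proved) together with connectivity of $G$ and a maximum-modulus argument. Under the assumption $\abs{\theta_x} < \pi/2$ for all $x$, the kernel definition~\eqref{eq:kernel-definition} simplifies uniformly to $K(\theta_x, \theta_y) = \sin(\abs{\theta_x} - \abs{\theta_y})$, so Lemma~\ref{lem:kernel-stability} yields
\begin{equation*}
    \sum_{x \in V} A_{x,y} \sin\p[\big]{\abs{\theta_x} - \abs{\theta_y}} \geq 0 \qquad \text{for every } y \in V.
\end{equation*}

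First, I would pick $y^\ast \in V$ maximising $\abs{\theta_{y^\ast}}$. Then $\abs{\theta_x} - \abs{\theta_{y^\ast}} \in (-\pi/2, 0]$ for every $x$, so every summand above is $\leq 0$. Combined with the inequality, each summand must vanish whenever $A_{x,y^\ast} = 1$, giving $\abs{\theta_x} = \abs{\theta_{y^\ast}}$ for all neighbours $x$ of $y^\ast$. Iterating this argument along paths in $G$, connectivity forces $\abs{\theta_x} = c$ to be constant across $V$, for some $c \in [0, \pi/2)$.

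Next I would show $c = 0$. Since every phase is $\pm c$, fix any edge $xy$ of $G$. If $\theta_x = \theta_y$ then $\sin(\theta_x - \theta_y) = 0$; if $\theta_x = -\theta_y$ then $\sin(\theta_x - \theta_y) = \pm \sin(2c)$. Now apply the equilibrium condition~\eqref{eq:equilibrium} at some vertex $y$ with $\theta_y = c$: only neighbours $x$ with $\theta_x = -c$ contribute, and each contributes $\sin(-2c)$, so either $\sin(2c) = 0$ (forcing $c = 0$ since $2c \in [0,\pi)$) or $y$ has no neighbour with the opposite sign. The latter option, by connectivity, would force all $\theta_x$ to take the same value $v \in \{c, -c\}$, and then $\rho_1(\theta) = e^{iv}$ must be real and non-negative, which in the range $(-\pi/2, \pi/2)$ only happens for $v = 0$, i.e.\ $c = 0$. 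Either way $\theta \equiv 0$.

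The only delicate step is the propagation: one must be careful that $\abs{\theta_x} - \abs{\theta_{y^\ast}}$ lies in a range where $\sin$ is strictly negative except at $0$, and here the assumption $\abs{\theta_x} < \pi/2$ is exactly what ensures the difference stays in $(-\pi/2, \pi/2)$, where $\sin$ has a unique zero. The remainder is a routine case analysis using the equilibrium condition and the normalisation $\rho_1 \in [0,1]$.
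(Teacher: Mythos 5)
Your proof is correct and takes essentially the same route as the paper: both arguments choose a vertex $z$ with maximal $\abs{\theta_z}$, apply the kernel stability condition (\Cref{lem:kernel-stability}) together with $\sin(\abs{\theta_x}-\abs{\theta_z})\le 0$ to force $\abs{\theta_x}$ constant across a connected graph, and then use the equilibrium condition to push the constant to zero. Your second half differs only cosmetically — the paper sums the equilibrium identity over all $y\in\setcomp{S}$ to get $e(S,\setcomp{S})\sin(2c)=0$ in one stroke, whereas you apply it vertex-by-vertex; just make sure the conclusion ``$y$ has no opposite-sign neighbour'' is invoked for \emph{every} vertex $y$ (not a single one) before connectivity lets you conclude $S$ or $\setcomp{S}$ is empty. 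You do spell out the degenerate case where all phases share a sign and $\rho_1(\theta)\in[0,1]$ forces $c=0$, a step the paper's write-up leaves implicit.
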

\begin{proof}
Since $\theta$ is a stable state and $\abs{\theta_x} < \pi/2$ for every $x \in V$, we have $K(\theta_x, \theta_z) = \sin(\abs{\theta_x} - \abs{\theta_z})$.
Let $\abs{\theta_z} = \max_{x \in V} \abs{\theta_x}$, so for every $x \in V$, we have $-\pi < \abs{\theta_x} - \abs{\theta_z} \leq 0$, which implies $\sin(\abs{\theta_x} - \abs{\theta_z}) < 0$ unless $\abs{\theta_x} = \abs{\theta_z}$.
Now, from \Cref{lem:kernel-stability} we have that
\begin{equation*}
	\sum_{x \in V} A_{x,z} K(\theta_x, \theta_z)
	= \sum_{x \in V} A_{x,z} \sin(\abs{\theta_x} - \abs{\theta_z})
	\geq 0.
\end{equation*}
Thus $\abs{\theta_x} = \abs{\theta_z}$ for every $x$ neighbor of $z$.
As $G$ is connected, repeating this argument leads to $\abs{\theta_x} = \abs{\theta_z}$ for all $x \in V$.
Therefore, there are $c \in [0, \pi/2)$ and a set $S \subset V$ with $\theta_x = c$ for $x \in V$ and $\theta_y = -c$ for $y \notin V$.
Summing over $y \in \setcomp{S}$ on the equilibrium condition~\eqref{eq:equilibrium}, we obtain
\begin{equation*}
	\sum_{y \in \setcomp{S}} \sum_{x \in V} A_{x,y} \sin(\theta_x - \theta_y) = \sum_{y \in \setcomp{S}} \sum_{x \in S} A_{x,y} \sin(2c) = 0.
\end{equation*}
As $G$ is connected, there is an edge from $S$ to $\setcomp{S}$.
Finally, as $0 \leq 2c < \pi$, we have $\sin(2c) > 0$ unless $c = 0$.
Therefore, $\theta_x = c = 0$ for all $x \in V$.
\end{proof}


\section{Preliminaries on spectral graph theory}
\label{sec:edgebounds}

Let $G = (V,E)$ be a graph with $n$ vertices and adjacency matrix $A$.
The degree of a vertex $x \in V$ is denoted by $d(x) \defined \sum_{y \in V} A_{x,y}$, so edges forming self-loops are counted once.\footnote{Self-loops do not affect Kuramoto dynamics as diagonal entries cancel in $\grad \cE_G$.}
The minimum and maximum degree of vertices in $G$ is denoted $\degmin(G)$ and $\degmax(G)$, respectively.
Given subsets $X, Y \subseteq V$, we write $e(X,Y) \defined \sum_{x \in X}\sum_{y \in Y} A_{x,y}$.

Recall that $D$ is the \indef{diagonal degree matrix} with $D_{x,x} = \deg(x)$ and $L$ is the Laplacian matrix with $L = D - A$.
We start by collecting a few facts related to our notion of spectral expanders.
We remark that these facts are standard in the literature for regular graphs and we adapted the proofs for the non-regular case (see, for example, Chung's book~\cite{Chung1997-se}).

\begin{lemma}
\label{lem:minmaxdegree}
If $G$ is an $(n, \degm, \anorm, \lnorm^-, \lnorm^+)$-expander, then
\begin{equation*}
    (1 + \lnorm^-)\degm - \frac{\degm}{n} \leq \degmin(G) \leq \degmax(G) \leq (1 + \lnorm^+)\degm + 1 - \frac{\degm}{n}.
\end{equation*}
\end{lemma}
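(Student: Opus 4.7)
The plan is to extract diagonal entries from the Loewner inequalities defining a spectral expander. Since $\lnorm^- \degm I \preceq \Delta_L \preceq \lnorm^+ \degm I$, testing against the standard basis vector $e_x$ gives
\begin{equation*}
    \lnorm^- \degm \;\leq\; (\Delta_L)_{xx} \;\leq\; \lnorm^+ \degm
\end{equation*}
for every vertex $x \in V$. So the main step is to compute $(\Delta_L)_{xx}$ explicitly in terms of $d(x)$ and then solve.

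I would use the identity $\Delta_L = L - \degm I + (\degm/n) J$ from the paper's notation. Since $L = D - A$, the diagonal entries are
\begin{equation*}
    (\Delta_L)_{xx} = L_{xx} - \degm + \frac{\degm}{n} = d(x) - A_{xx} - \degm + \frac{\degm}{n}.
\end{equation*}
Combining with the previous display and rearranging yields
\begin{equation*}
    (1 + \lnorm^-)\degm + A_{xx} - \frac{\degm}{n} \;\leq\; d(x) \;\leq\; (1 + \lnorm^+)\degm + A_{xx} - \frac{\degm}{n}.
\end{equation*}

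Finally, I would use that $A_{x,x} \in \{0,1\}$ since $A$ has $\{0,1\}$ entries: bounding $A_{xx} \geq 0$ in the lower inequality and $A_{xx} \leq 1$ in the upper inequality, and then taking the minimum and maximum over $x$, produces
\begin{equation*}
    (1 + \lnorm^-)\degm - \frac{\degm}{n} \leq \degmin(G) \leq \degmax(G) \leq (1 + \lnorm^+)\degm + 1 - \frac{\degm}{n},
\end{equation*}
which is exactly the claim. There is no real obstacle here; the result is a one-line consequence of the Loewner inequality applied to basis vectors, and the only subtlety is keeping track of the possible self-loop contribution $A_{xx}$ (which accounts for the asymmetric ``$+1$'' on the upper side but not the lower side).
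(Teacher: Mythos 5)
Your proof is correct and follows essentially the same approach as the paper: read off the diagonal entries of $\Delta_L$ from the Loewner inequality, compute $(\Delta_L)_{xx}$ in terms of $d(x)$ and $A_{xx}$, and use $A_{xx}\in\{0,1\}$ asymmetrically to get the lower and upper bounds. The only cosmetic difference is that you phrase the diagonal extraction as testing against $e_x$, while the paper observes that a PSD matrix has nonnegative diagonal; these are the same step.
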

\begin{proof}
A positive semidefinite matrix has non-negative diagonal entries.
Thus, the condition $\lnorm^- \degm I \preceq \Delta_L \preceq \lnorm^+ \degm I$ implies that $\lnorm^- \degm \leq (\Delta_L)_{x,x} \leq \lnorm^+ \degm$.
Therefore,
\begin{equation*}
    (1 + \lnorm^-) \degm \leq d(x) - A_{x,x} + \degm/n \leq (1 + \lnorm^+) \degm.
\end{equation*}
The result follows as $A_{x,x} \in \set{0,1}$.
\end{proof}

The following proposition provides a partial converse of the previous lemma.

\begin{proposition}
\label{prop:degree-spectral}
Let $G$ be a graph with $n$ vertices.
If $\norm{\Delta_A} \leq \anorm \degm$ and
\begin{equation*}
    (1 + \lnorm^- + \anorm) \degm  \leq \degmin(G) \leq \degmax(G) \leq (1 + \lnorm^+ - \anorm) \degm,
\end{equation*}
then $G$ is an $(n, \degm, \anorm, \lnorm^-, \lnorm^+)$-expander.
\end{proposition}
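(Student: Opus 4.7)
The plan is straightforward once one observes the decomposition
\begin{equation*}
    \Delta_L = \Delta_D - \Delta_A,
\end{equation*}
which follows directly from the definitions $\Delta_L = L - \degm I + (\degm/n) J$, $\Delta_A = A - (\degm/n) J$ and $\Delta_D = D - \degm I$, together with $L = D - A$. The first defining condition for an $(n,\degm,\anorm,\lnorm^-,\lnorm^+)$-expander, namely $\norm{\Delta_A} \le \anorm \degm$, is assumed outright, so the only thing to verify is the Loewner sandwich $\lnorm^- \degm I \preceq \Delta_L \preceq \lnorm^+ \degm I$.

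To do so, I would bound the two summands on the right-hand side separately and combine them via the triangle-type inequality for the Loewner order. Since $\Delta_D$ is a diagonal matrix whose entries $d(x) - \degm$ lie, by the hypothesis on $\degmin(G)$ and $\degmax(G)$, in the interval $[(\lnorm^- + \anorm)\degm, \, (\lnorm^+ - \anorm)\degm]$, one immediately has
\begin{equation*}
    (\lnorm^- + \anorm)\degm \, I \preceq \Delta_D \preceq (\lnorm^+ - \anorm)\degm \, I.
\end{equation*}
On the other hand, from $\norm{\Delta_A} \le \anorm \degm$ and the fact that $\Delta_A$ is symmetric, one obtains $-\anorm \degm \, I \preceq \Delta_A \preceq \anorm \degm \, I$, or equivalently $-\anorm \degm \, I \preceq -\Delta_A \preceq \anorm \degm \, I$.

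Adding the two Loewner inequalities gives
\begin{equation*}
    \lnorm^- \degm \, I = \bigl((\lnorm^- + \anorm)\degm - \anorm \degm\bigr) I \preceq \Delta_D - \Delta_A = \Delta_L \preceq \bigl((\lnorm^+ - \anorm)\degm + \anorm \degm\bigr) I = \lnorm^+ \degm \, I,
\end{equation*}
which is exactly the required second condition in the definition of spectral expander. This would complete the proof. There is no genuine obstacle: the entire content is that the degree deviations $d(x) - \degm$ make up the diagonal of $\Delta_L + \Delta_A$, so controlling them in absolute value together with $\norm{\Delta_A}$ controls $\Delta_L$ in the Loewner order, with the $\anorm$ buffers in the hypothesis being precisely the slack needed to absorb the adjacency perturbation.
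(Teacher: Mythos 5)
Your proof is correct and is essentially the same as the paper's: both decompose $\Delta_L = \Delta_D - \Delta_A$, bound the diagonal $\Delta_D$ (equivalently, $D$) by the degree hypotheses, bound $\Delta_A$ in the Loewner order via $\norm{\Delta_A}\le \anorm\degm$, and add the two sandwiches so the $\anorm\degm$ buffers cancel. The only cosmetic difference is that the paper states the diagonal bound for $D$ and shifts by $\degm I$ at the end, while you work with $\Delta_D$ directly.
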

\begin{proof}
The condition on the degree implies, as $D$ is a diagonal matrix, that
\begin{align*}
    (1 + \lnorm^-)\degm I + \anorm \degm I
    \preceq D \preceq
    (1 + \lnorm^+)\degm I - \anorm \degm I.
\end{align*}
Since $\norm{\Delta_A} \leq \anorm \degm$, we have $-\anorm \degm I \preceq \Delta_A \preceq \anorm \degm I$, thus
\begin{align*}
    (1 + \lnorm^-)\degm I  + \Delta_A
    \preceq D \preceq
    (1 + \lnorm^+)\degm I + \Delta_A,
\end{align*}
which translates to $\lnorm^- \degm I \preceq \Delta_D - \Delta_A  = \Delta_L \preceq \lnorm^+ \degm I$ as claimed.
\end{proof}

One can see that $\lnorm^- > -1$ is equivalent to the graph being connected.
Indeed, $G$ is connected if and only if $L$ has a single eigenvalue equal to $0$.
The operator $\degm I - (\degm/n) J$ has a single eigenvalue $0$, and an eigenvalue $\degm$ with multiplicity $n-1$.
Thus the second smallest eigenvalue of $L$ is bounded below by $\degm +\lnorm^- \degm > 0$ when $\lnorm^- > -1$.

Expander graphs are known to satisfy so-called \emph{expander mixing lemmas}, which essentially guarantee that the number of edges between two sets of vertices (of sufficiently large size) is comparable to the expected number of edges that could be computed by taking into account the graph's average degree, and the size of the two sets.
We now proceed by establishing a number of such results for our notion of expander graph.
This will consist in estimating the number of edges across several types of cuts in an $(n, \degm, \anorm, \lnorm^-, \lnorm^+)$-expander.

\begin{lemma}
\label{lem:eXX}
If $G$ is an $(n, \degm, \anorm)$-expander, then for every $X \subseteq V$, we have
\begin{equation*}
    -\anorm \degm \card{X} \leq e(X,X) - (\degm/n)\card{X}^2 \leq
    \anorm \degm \card{X}.
\end{equation*}
\end{lemma}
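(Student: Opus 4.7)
The plan is to use the standard trick of testing the quadratic form of the centred adjacency matrix $\Delta_A$ against the indicator vector $\ind_X$ of the set $X$. First I would observe that $e(X,X) = \sum_{x,y \in X} A_{x,y} = \ind_X^\tran A\, \ind_X$, and similarly $(\degm/n)\card{X}^2 = (\degm/n)\ind_X^\tran J\, \ind_X$, since $\ind_X^\tran J\, \ind_X = \card{X}^2$. Subtracting these identities and using the definition $\Delta_A = A - (\degm/n)J$ gives the key identity
\begin{equation*}
    e(X,X) - (\degm/n)\card{X}^2 = \ind_X^\tran \Delta_A\, \ind_X.
\end{equation*}

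Next I would bound the right-hand side by the operator norm. Since $\Delta_A$ is symmetric, for any vector $v$ one has $\abs{v^\tran \Delta_A v} \leq \norm{\Delta_A}\, \norm{v}_2^2$. Applying this with $v = \ind_X$, together with $\norm{\ind_X}_2^2 = \card{X}$ and the expander hypothesis $\norm{\Delta_A} \leq \anorm \degm$, yields
\begin{equation*}
    \abs[\big]{\ind_X^\tran \Delta_A\, \ind_X} \leq \anorm \degm \card{X},
\end{equation*}
which is exactly the desired two-sided inequality. There is no real obstacle here; the whole argument is a one-line expander-mixing-lemma style estimate, and the only ingredient beyond definitions is the variational bound on a symmetric quadratic form, which is entirely standard.
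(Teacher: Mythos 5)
Your proof is correct and takes exactly the same approach as the paper: both test the quadratic form of $\Delta_A$ against the indicator vector $\ind_X$ and apply the variational bound $\abs{\inner{\ind_X}{\Delta_A \ind_X}} \leq \norm{\Delta_A}\,\norm{\ind_X}^2$. The only difference is superficial (transpose notation versus inner-product notation).
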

\begin{proof}
Let $\ind_X$ be the indicator function of $X \subseteq V$.
We have $\inner{\ind_X}{A \ind_X} = e(X,X)$, $\inner{\ind_X}{\ind_X} = \card{X}$ and $\inner{\ind_X}{J \ind_X} = \card{X}^2$.
Therefore,
\begin{equation*}
    \abs[\big]{e(X,X) - (\degm/n) \card{X}^2}
    = \abs[\big]{\inner{\ind_X}{\Delta_A \ind_X}}
    \leq \norm{\Delta_A} \inner{\ind_X}{\ind_X}
    \leq \anorm \degm \card{X}. \qedhere
\end{equation*}
\end{proof}

\begin{lemma}
\label{lem:eXXc}
If $G$ is an $(n, \degm, \anorm, \lnorm^-, \lnorm^+)$-expander, then for every $X \subseteq V$, we have
\begin{equation*}
    \lnorm^- \p[\Big]{\frac{\degm}{n}} \card{X}\card{\setcomp{X}} \leq e(X,\setcomp{X}) - \p[\Big]{\frac{\degm}{n}} \card{X}\card{\setcomp{X}} \leq
    \lnorm^+ \p[\Big]{\frac{\degm}{n}} \card{X}\card{\setcomp{X}}.
\end{equation*}
\end{lemma}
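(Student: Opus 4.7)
The plan is to apply the spectral bound $\lnorm^- \degm I \preceq \Delta_L \preceq \lnorm^+ \degm I$ to a well-chosen test vector tailored to the cut quantity $e(X,\setcomp{X})$. The natural choice is $f \defined \ind_X - (\card{X}/n)\ind$, which is orthogonal to $\ind$ and therefore satisfies $Jf = \ind(\ind^\tran f) = 0$. This orthogonality is essential, because it kills the $(\degm/n)J$ summand of $\Delta_L = L - \degm I + (\degm/n)J$ inside the quadratic form, leaving only $\inner{f}{\Delta_L f} = \inner{f}{Lf} - \degm \inner{f}{f}$.

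The two factors appearing on the right are both easily evaluated. First, expanding $\inner{f}{f}$ gives $\card{X} - \card{X}^2/n = \card{X}\card{\setcomp{X}}/n$. Second, since the Laplacian kills constants ($L\ind = 0$), the cross terms vanish and $\inner{f}{Lf} = \inner{\ind_X}{L\ind_X}$; writing $L = D - A$, this equals $\sum_{x \in X} d(x) - e(X,X) = \bigl(e(X,X)+e(X,\setcomp{X})\bigr) - e(X,X) = e(X,\setcomp{X})$. Combining these identities,
\begin{equation*}
	\inner{f}{\Delta_L f} = e(X,\setcomp{X}) - \frac{\degm}{n}\card{X}\card{\setcomp{X}}.
\end{equation*}

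Finally, sandwiching the quadratic form between $\lnorm^-\degm\inner{f}{f}$ and $\lnorm^+\degm\inner{f}{f}$ and substituting $\inner{f}{f} = \card{X}\card{\setcomp{X}}/n$ yields the claimed double inequality. There is no real obstacle here: this is the standard Laplacian version of the expander mixing lemma, and the only delicate point is remembering to project $\ind_X$ onto the orthogonal complement of $\ind$ so that the rank-one correction $(\degm/n)J$ inside $\Delta_L$ drops out of the calculation.
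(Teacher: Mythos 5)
Your proof is correct and follows essentially the same approach as the paper: both apply the spectral sandwich on $\Delta_L$ to the same test vector up to scaling (the paper uses $\card{\setcomp{X}}\ind_X - \card{X}\ind_{\setcomp{X}}$, which is exactly $n$ times your $\ind_X - (\card{X}/n)\ind$, so the two quadratic-form computations agree after cancelling the common factor $n^2$).
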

\begin{proof}
Consider the function $f = \card{\setcomp{X}} \ind_X - \card{X}\ind_{\setcomp{X}}$.
One can easily verify that $\inner{f}{f} = n\card{X}\card{\setcomp{X}}$, $\inner{f}{Lf} = n^2 e(X,\setcomp{X})$ and $\inner{f}{Jf} = 0$.
Therefore,
\begin{equation*}
	\inner{f}{\Delta_L f}
	= \p[\big]{e(X,\setcomp{X}) - (\degm/n)\card{X}\card{\setcomp{X}}}n^2.
\end{equation*}

Since $\Delta_L - \lnorm^- \degm I$ is positive semidefinite, $\inner{f}{(\Delta_L - \lnorm^- \degm I)f} \geq 0$ and then
\begin{equation*}
    e(X,\setcomp{X}) - (\degm/n) \card{X}\card{\setcomp{X}}
    \geq \lnorm^- (\degm/n) \card{X} \card{\setcomp{X}},
\end{equation*}
which is the lower bound.
The upper bound follows similarly from $\lnorm^+ \degm I - \Delta_L \succeq 0$.
\end{proof}

\begin{remark}
The above lemma essentially says that the isometric constant, also known as the Cheeger constant, of an $(n, \degm, \anorm, \lnorm^-, \lnorm^+)$-expander is bounded below by $(1 + \lnorm^-)\degm/2$, which is another reason why $\lnorm^- > -1$ is equivalent to connectivity.
\end{remark}

\begin{lemma}
\label{lem:eXV}
If $G$ is an $(n, \degm, \anorm, \lnorm^-, \lnorm^+)$-expander, then for every subset $X \subseteq V$ with density $\rho = \card{X} /n$, we have
\begin{equation*}
	\p[\big]{1 + \lnorm^-(1 - \rho) - \anorm} \degm \card{X}  \leq e(X,V) \leq \p[\big]{1 + \lnorm^+(1 - \rho) + \anorm} \degm \card{X}.
\end{equation*}
\end{lemma}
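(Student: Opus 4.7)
The plan is to decompose $e(X, V)$ into its contribution from inside $X$ and across the cut, and apply the two preceding lemmas separately. Concretely, by definition of $e$, we have the exact identity $e(X,V) = e(X,X) + e(X,\setcomp{X})$, so both bounds follow from summing the bounds in \Cref{lem:eXX} and \Cref{lem:eXXc}.

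For the lower bound, \Cref{lem:eXX} yields $e(X,X) \geq (\degm/n)\card{X}^2 - \anorm\degm\card{X}$, and \Cref{lem:eXXc} yields
\begin{equation*}
    e(X,\setcomp{X}) \geq (1 + \lnorm^-)(\degm/n)\card{X}\card{\setcomp{X}}.
\end{equation*}
Adding these and using the arithmetic identity
\begin{equation*}
    (\degm/n)\card{X}^2 + (\degm/n)\card{X}\card{\setcomp{X}} = (\degm/n)\card{X}\cdot n = \degm\card{X},
\end{equation*}
together with $\card{\setcomp{X}}/n = 1 - \rho$, gives exactly $(1 + \lnorm^-(1-\rho) - \anorm)\degm\card{X}$. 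The upper bound is entirely symmetric, using the other halves of \Cref{lem:eXX} and \Cref{lem:eXXc}.

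There is no real obstacle here: the proof is a one-line telescoping once the decomposition $e(X,V) = e(X,X) + e(X,\setcomp{X})$ is made explicit. The only point that deserves care is to make sure the normalisations match — namely that $e(X,X)$ in \Cref{lem:eXX} is the sum $\sum_{x,y \in X} A_{x,y}$ (so that each internal edge contributes twice), which is consistent with how $e(X,V) = \sum_{x \in X} d(x)$ splits as $e(X,X) + e(X,\setcomp{X})$ under the same convention. With that bookkeeping fixed, the result is immediate.
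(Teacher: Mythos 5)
Your proof is correct and follows exactly the same route as the paper: decompose $e(X,V) = e(X,X) + e(X,\setcomp{X})$, bound the two terms with \Cref{lem:eXX} and \Cref{lem:eXXc} respectively, and simplify. The remark on the double-counting convention for $e(X,X)$ is a sensible sanity check, consistent with the paper's definition $e(X,Y) = \sum_{x\in X}\sum_{y\in Y}A_{x,y}$.
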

\begin{proof}
From \Cref{lem:eXX} and \Cref{lem:eXXc}, we write
\begin{align*}
	e(X, V)
	&= e(X,X) + e(X,\setcomp{X}) \\
	&\leq (\degm/n) \card{X}^2 + \anorm \degm \card{X} + (1 + \lnorm^+) (\degm/n) \card{X}\card{\setcomp{X}} \\
	&= \p[\big]{1 + \lnorm^+\card{\setcomp{X}}/n + \anorm} \degm\card{X} \\
	&= \p[\big]{1 + \lnorm^+(1 - \rho) + \anorm} \degm \card{X}.
\end{align*}
The lower bound is analogous.
\end{proof}

In particular, this lemma implies that $\degmax(G) \leq (1 + \lnorm^+ + \anorm)\degm$.
We need a slightly more elaborate bound for the number of edges $e(X,Y)$ between two disjoint sets in the graph.
Indeed, we would like to have an estimate of the form $e(X,Y) \geq \eps (\degm/n) \card{X} \card{Y}$.
However, such a bound does not hold unless we make additional assumptions on the sets $X$ and $Y$.

\begin{lemma}
\label{lem:eXYc}
Let $G$ be an $(n, \degm, \anorm, \lnorm^-, \lnorm^+)$-expander and $\eps > 0$.
If $X, Y \subseteq V$ satisfy $X \subseteq Y$, $\card{Y} \leq n/2$ and $(1 + \delta) \card{X} \geq \card{Y}$, where $\delta = \eps/(\lnorm^+ - \lnorm^-)$, then
\begin{equation*}
    (1 + \lnorm^- - \eps)  \p[\Big]{\frac{\degm}{n}} \card{X} \card{\setcomp{Y}}
    \leq e(X, \setcomp{Y}) \leq
    (1 + \lnorm^+ + \eps)  \p[\Big]{\frac{\degm}{n}} \card{X} \card{\setcomp{Y}}.
\end{equation*}
\end{lemma}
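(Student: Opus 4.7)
The plan is to reduce the statement to three applications of \Cref{lem:eXXc} via a clever decomposition of $V$ into $X$, $W \defined Y \setminus X$, and $\setcomp{Y}$. The starting observation is a purely combinatorial identity:
\begin{equation*}
    2\,e(X, \setcomp{Y}) = e(X, \setcomp{X}) + e(Y, \setcomp{Y}) - e(W, \setcomp{W}).
\end{equation*}
To see this, expand each right-hand cut using $\setcomp{X} = W \sqcup \setcomp{Y}$, $\setcomp{Y} = \setcomp{Y}$, and $\setcomp{W} = X \sqcup \setcomp{Y}$, and observe that the cross-terms $e(X,W)$ and $e(W, \setcomp{Y})$ cancel.

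Next, I would apply \Cref{lem:eXXc} to each of the sets $X$, $Y$, $W$ to sandwich each of $e(X, \setcomp{X})$, $e(Y, \setcomp{Y})$, $e(W, \setcomp{W})$ between $(1+\lnorm^-)m(S)$ and $(1+\lnorm^+) m(S)$, where I write $m(S) \defined (\degm/n) \card{S}\card{\setcomp{S}}$. The miraculous algebraic identity one then needs is
\begin{equation*}
    m(X) + m(Y) - m(W) = 2(\degm/n)\, \card{X}\card{\setcomp{Y}},
\end{equation*}
which follows from a direct expansion using $\card{Y} = \card{X} + \card{W}$, $\card{\setcomp{X}} = \card{W} + \card{\setcomp{Y}}$ and $\card{\setcomp{W}} = \card{X} + \card{\setcomp{Y}}$. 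Plugging the lower bounds of \Cref{lem:eXXc} into the identity gives
\begin{equation*}
    2\,e(X, \setcomp{Y}) \ge (1+\lnorm^-)\bigl(m(X) + m(Y)\bigr) - (1+\lnorm^+)\, m(W) = 2(1+\lnorm^-)(\degm/n)\card{X}\card{\setcomp{Y}} - (\lnorm^+ - \lnorm^-)\, m(W),
\end{equation*}
and the analogous upper bound is obtained by exchanging $\lnorm^-$ and $\lnorm^+$ in the appropriate places, yielding an error term of $+(\lnorm^+ - \lnorm^-)m(W)/2$.

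The last step is to check that the error $(\lnorm^+ - \lnorm^-)m(W)/2$ is controlled by $\eps (\degm/n)\card{X}\card{\setcomp{Y}}$ under the hypothesis $\card{W} \le \delta \card{X}$ with $\delta = \eps/(\lnorm^+-\lnorm^-)$. Since $\card{Y} \leq n/2$ implies $\card{\setcomp{Y}} \ge \card{X}$, one has $n - \card{W} = \card{X} + \card{\setcomp{Y}} \leq 2\card{\setcomp{Y}}$, so
\begin{equation*}
    \tfrac{1}{2}(\lnorm^+-\lnorm^-)\, m(W) = \tfrac{1}{2}(\lnorm^+-\lnorm^-)(\degm/n)\card{W}(n-\card{W}) \leq (\lnorm^+-\lnorm^-)\,\delta\,(\degm/n)\card{X}\card{\setcomp{Y}} = \eps (\degm/n)\card{X}\card{\setcomp{Y}},
\end{equation*}
which closes both the lower and upper bounds. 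The only non-routine ingredient is the cancellation identity producing $2e(X, \setcomp{Y})$, and the fact that the linear combination $m(X)+m(Y)-m(W)$ telescopes to exactly the desired quantity; the bookkeeping constants involving the factor of $2$ from $n - \card{W} \leq 2\card{\setcomp{Y}}$ are the main place where one needs the hypothesis $\card{Y} \leq n/2$.
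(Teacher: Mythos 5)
Your proposal is correct and takes essentially the same approach as the paper: the paper starts from the identical identity $e(X,\setcomp{Y}) = \tfrac{1}{2}\bigl(e(X,\setcomp{X}) + e(Y,\setcomp{Y}) - e(Y\setminus X, \setcomp{(Y\setminus X)})\bigr)$, applies \Cref{lem:eXXc} to each cut, and controls the error term $(\lnorm^+-\lnorm^-)\card{W}\card{\setcomp{W}}$ using $\card{W}\le\delta\card{X}$ and $\card{\setcomp{Y}}\ge n/2$. The only cosmetic difference is that the paper bounds $\card{\setcomp{W}}\le n$ and then invokes $\card{\setcomp{Y}}\ge n/2$ at the end, whereas you bound $\card{\setcomp{W}}\le 2\card{\setcomp{Y}}$ directly; these are the same estimate.
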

\begin{proof}
We start with the following identity,
\begin{align*}
    e(X,\setcomp{Y}) = \frac{e(X,\setcomp{X}) + e(Y, \setcomp{Y}) - e(Y\setminus X,\setcomp{(Y \setminus X)} )}{2}.
\end{align*}
By \Cref{lem:eXXc}, we obtain a lower/upper bound for
$e(X, \setcomp{Y}) - (\degm/n) \card{X} \card{\setcomp{Y}}$ by using lower/upper bounds for $e(Z, \setcomp{Z})$ for suitable sets $Z$.
This gives
\begin{align*}
    e(X, \setcomp{Y}) - (\degm/n) \card{X} \card{\setcomp{Y}}
    &\geq \frac{\degm}{2n} \p[\Big]{ \lnorm^- \card{X} \card{\setcomp{X}} +
    \lnorm^- \card{Y} \card{\setcomp{Y}} -
    \lnorm^+ \card{Y\setminus X } \card{\setcomp{(Y\setminus X)}} } \\
    &= \frac{\degm}{2n} \p[\Big]{ 2 \lnorm^- \card{X} \card{\setcomp{Y}} + (\lnorm^- - \lnorm^+) \card{Y\setminus X } \card{\setcomp{(Y\setminus X)}} },
\end{align*}
and
\begin{align*}
    e(X, \setcomp{Y}) - (\degm/n) \card{X} \card{\setcomp{Y}}
    &\leq \frac{\degm}{2n} \p[\Big]{ \lnorm^+ \card{X} \card{\setcomp{X}} +
    \lnorm^+ \card{Y} \card{\setcomp{Y}} -
    \lnorm^- \card{Y\setminus X } \card{\setcomp{(Y\setminus X)}} } \\
    &= \frac{\degm}{2n} \p[\Big]{ 2 \lnorm^+ \card{X} \card{\setcomp{Y}} + (\lnorm^+ - \lnorm^-) \card{Y\setminus X } \card{\setcomp{(Y\setminus X)}} }.
\end{align*}

Adding the assumptions that $\card{Y \setminus X } \leq \delta \card{X}$ leads to
\begin{align*}
    e(X, \setcomp{Y}) - (\degm/n) \card{X} \card{\setcomp{Y}}
    &\geq \lnorm^- (\degm/n) \card{X} \card{\setcomp{Y}} + \p[\big]{(\lnorm^- - \lnorm^+)/2} \delta \degm \card{X} \\
    &\geq \p[\big]{ \lnorm^- + \delta(\lnorm^- - \lnorm^+)} (\degm/n) \card{X} \card{\setcomp{Y}},
\end{align*}
where in the last step we used that $\card{\setcomp{Y}} \geq n/2$.
Similarly,
\begin{align*}
    e(X, \setcomp{Y}) - (\degm/n) \card{X} \card{\setcomp{Y}}
    &\leq \lnorm^+ (\degm/n) \card{X} \card{\setcomp{Y}} + \p[\big]{(\lnorm^+ - \lnorm^-)/2} \delta \degm \card{X} \\
    &\leq \p[\big]{ \lnorm^+ + \delta(\lnorm^+ - \lnorm^-)} (\degm/n) \card{X} \card{\setcomp{Y}}.
\end{align*}
As $\delta = \eps / (\lnorm^+ - \lnorm^-)$, we are done.
\end{proof}

Later, it is useful to have a slightly modified version of \Cref{lem:eXYc} in the following form.

\begin{lemma}
\label{lem:eXYc-smallX}
Let $G$ be an $(n, \degm, \anorm, \lnorm^-, \lnorm^+)$-expander and $\eps, \rho > 0$.
If $X, Y \subseteq V$ satisfy  $X \subseteq Y$, $\card{Y} \leq n/2$, $(1 + \delta) \card{X} \geq \card{Y}$ and $\card{X} \leq \rho \anorm n$, where $\delta = \eps/(\lnorm^+ - \lnorm^-)$, then
\begin{equation*}
    e(X,\setcomp{Y}) \geq \frac{(1 + \lnorm^- - \eps)}{2(1 + \rho)\anorm} \, e(X,X).
\end{equation*}
\end{lemma}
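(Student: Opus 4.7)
The plan is to derive the claimed inequality by combining the lower bound on $e(X,\setcomp{Y})$ from \Cref{lem:eXYc} with the upper bound on $e(X,X)$ from \Cref{lem:eXX}, and then eliminating $\card{X}$ from the ratio using the size constraints on $X$ and $Y$.

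First I would invoke \Cref{lem:eXYc}, whose hypotheses are met by assumption, to obtain
\begin{equation*}
    e(X, \setcomp{Y}) \geq (1 + \lnorm^- - \eps) \left(\frac{\degm}{n}\right) \card{X} \card{\setcomp{Y}}.
\end{equation*}
Since $\card{Y} \leq n/2$, we have $\card{\setcomp{Y}} \geq n/2$, so this simplifies to
\begin{equation*}
    e(X, \setcomp{Y}) \geq \frac{1 + \lnorm^- - \eps}{2}\, \degm \card{X}.
\end{equation*}

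Next, I would bound $e(X,X)$ from above using \Cref{lem:eXX}, which gives
\begin{equation*}
    e(X,X) \leq \frac{\degm}{n}\card{X}^2 + \anorm \degm \card{X} = \frac{\degm}{n}\card{X}\bigl(\card{X} + \anorm n\bigr).
\end{equation*}
Using the hypothesis $\card{X} \leq \rho \anorm n$, the factor $\card{X} + \anorm n$ is at most $(1 + \rho)\anorm n$, so
\begin{equation*}
    e(X,X) \leq (1 + \rho) \anorm \degm \card{X}.
\end{equation*}

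Finally, dividing the two bounds eliminates the common factor $\degm \card{X}$ and yields
\begin{equation*}
    \frac{e(X,\setcomp{Y})}{e(X,X)} \geq \frac{(1 + \lnorm^- - \eps)/2}{(1 + \rho)\anorm} = \frac{1 + \lnorm^- - \eps}{2(1 + \rho)\anorm},
\end{equation*}
which is the desired inequality. There is no real obstacle here: the lemma is a direct bookkeeping consequence of the two previously established edge-count estimates, with the hypothesis $\card{X} \leq \rho\anorm n$ serving precisely to turn the quadratic-in-$\card{X}$ upper bound on $e(X,X)$ into a clean linear bound so that the ratio becomes independent of $\card{X}$.
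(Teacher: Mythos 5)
Your proof is correct and uses exactly the same ingredients as the paper's: the lower bound on $e(X,\setcomp{Y})$ from \Cref{lem:eXYc}, the upper bound on $e(X,X)$ from \Cref{lem:eXX} together with $\card{X}\leq\rho\anorm n$, and the substitution $\card{\setcomp{Y}}\geq n/2$. The only difference is cosmetic: the paper writes the conclusion as a chain of inequalities rather than taking a ratio, thereby sidestepping the (trivial) degenerate case $e(X,X)=0$.
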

\begin{proof}
From \Cref{lem:eXX} and the fact that $\card{X} \leq \rho \anorm n$, we have
\begin{align*}
    e(X,X) \leq (\degm/n) \card{X}^2 + \anorm \degm \card{X}
    = \p[\big]{ \card{X}/n + \anorm } \degm \card{X}
    \leq (1 + \rho) \anorm \degm \card{X}.
\end{align*}
Using the lower bound in \Cref{lem:eXYc}, we obtain
\begin{align*}
    e(X, \setcomp{Y})
    &\geq (1 + \lnorm^- - \eps)  (\degm/n) \card{X} \card{\setcomp{Y}} \\
    &\geq \frac{(1 + \lnorm^- - \eps) \card{\setcomp{Y}}}{(1 + \rho)\anorm n} \, e(X,X)
    \geq \frac{(1 + \lnorm^- - \eps)}{2(1 + \rho)\anorm} \, e(X,X),
\end{align*}
where in the last step, we used that $\card{\setcomp{Y}} \geq n/2$.
\end{proof}

\begin{remark}
\label{rem:strict}
We note that the inequalities in the conclusions of \Cref{lem:eXYc} and \Cref{lem:eXYc-smallX} can be made into strict inequalities by making some of the inequalities in the assumption into strict inequalities.
\end{remark}


\section{The amplification argument}
\label{sec:amplification}

In this section we prove \Cref{thm:amplification-main} using the amplification argument, which we will briefly outline.
Our goal is to show that in a sufficiently expanding graph, the only stable state is the fully synchronized one.
To do so, we start with a stable state $\theta$ which we assume is not fully synchronized and we derive a contradiction from the stability conditions developed in \Cref{sec:kuramoto-graph} and the expansion properties from \Cref{sec:edgebounds}.

Given a stable state $\theta$, we will keep track of the set of vertices whose phases are in some special arcs.
For any angle $0 \leq \psi \leq \pi$ and state $\theta$, the set $\cC_\psi(\theta)$ is defined as
\begin{equation}
\label{eq:Cpsi}
	\cC_\psi = \cC_\psi(\theta) \defined \set[\big]{x \in V \st \cos(\theta_x) \leq \cos(\psi)}.
\end{equation}
Alternatively, we have $\cC_\psi = \set[\big]{x \in V \st \abs{\theta_x} \geq \abs{\psi}}$.
Thanks to the half-circle lemma (recall \Cref{lem:half-circle}), our task boils down to showing that the set $\cC_{\pi/2}$ is empty for any stable state $\theta$.

In a nutshell, our main argument proceeds by contradiction: we assume that there is a non-trivial stable state and then, by \Cref{lem:half-circle}, the set $\cC_{\pi/2}$ is not empty.
This implies that $\card{\cC_{\pi/2}} \defines c_0 \geq 1$.
We inductively construct a sequence of angles
\begin{equation*}
	\pi/2 = \beta_0 > \beta_1 > \beta_2 > \dotsb > \beta_M \geq 0,
\end{equation*}
with the corresponding lower bounds $\card{\cC_{\beta_s}} \geq c_s$ for $1 \leq s \leq M$, with
\begin{equation*}
	c_M > \dotsb > c_2 > c_1 > c_0 = \card{\cC_{\pi/2}} \geq 0.
\end{equation*}
For the convenience of notation, we take $\beta_{-1} = \pi$, $c_{-1} = 0$ and $\beta_{M+1} = 0$, $c_{M+1} = n$,
hence $\card{\cC_{\beta_s}} \geq c_{s}$ holds for all $-1 \leq s \leq M + 1$.

\begin{figure}[!ht]
\centering
\begin{tikzpicture}[scale=1.0]
	\begin{axis}[
		xmin=-5.8,xmax=5.8,ymin=-0.4,ymax=5.5,
      axis lines = none,
      ticks =none,
      unit vector ratio*=1 1 1,
   ]
   \addplot [thick, smooth, domain=(0:2*pi)] ({4*cos(deg(x))},{4*sin(deg(x))})
   node[left,pos=0.5]{$\beta_{-1}$}
   node[above,pos=0.25]{$\beta_0$}
   node[above right,pos=0.24]{$\beta_1$}
   node[above right,pos=0.19]{$\ddots$}
   node[above right,pos=0.125]{$\beta_{M}$}
   node[right,pos=0.0]{$\beta_{M+1}$};
   \addplot [only marks, mark=*, mark options={black, solid}, samples at={0, 0.7853, 0.9, 1.0, 1.1, 1.2, 1.3, 1.44, 1.5707, 3.1416},mark size=.05cm] ({4*cos(deg(x))}, {4*sin(deg(x))});
\end{axis}
\end{tikzpicture}
\caption{Angles $\beta_{-1} \geq \beta_0 \geq \beta_1 \geq \dotsb \geq \beta_M \geq \beta_{M+1}$ in the amplification argument.}
\label{fig:angles}
\end{figure}

The contradiction will be reached by obtaining, simultaneously, that $c_M$ is very large (almost equal to $n$) and the angle $\beta_M \geq \eps > 0$.
Thus, the set $\cC_{\beta_M}$ contains a big portion of the vertices in the graph, which contradicts the bounds on $\rho_1(\theta)$ and $\rho_2(\theta)$ from \Cref{lem:rhobounds}.
The expansion properties of the graph will be crucial to obtain a good lower bound on $\card{\cC_{\beta_M}}$ while keeping $\beta_M$ bounded away from 0.

We now derive quantitative inequalities related to this idea.

\begin{lemma}
\label{lem:amplification}
Under the above notation, the following holds:
\begin{equation*}
	\sum_{x \in V} \Sin(\theta_x)
	\geq c_M \sin^2(\beta_M).
\end{equation*}
\end{lemma}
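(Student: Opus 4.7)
The plan is to decompose $V$ into concentric shells indexed by the thresholds $\beta_k$, lower bound $\Sin^2(\theta_x)$ on each shell by using the monotonicity of $\Sin$, and then apply an Abel summation so that the lower bounds $\card{C_{\beta_s}} \geq c_s$ (rather than equalities) can be used.

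Since $\beta_{-1} = \pi > \beta_0 > \dotsb > \beta_M > \beta_{M+1} = 0$, the sets are nested as $\emptyset = C_{\beta_{-1}} \subseteq C_{\beta_0} \subseteq \dotsb \subseteq C_{\beta_M} \subseteq C_{\beta_{M+1}} = V$, and $V$ admits the disjoint decomposition $V = \bigsqcup_{k=0}^{M+1}(C_{\beta_k} \setminus C_{\beta_{k-1}})$. For $x$ in the $k$-th shell we have $\beta_k \leq \abs{\theta_x}$, and since $\Sin$ is non-decreasing on $[0,\pi]$ and $\Sin(\beta_k) = \sin(\beta_k)$ for $\beta_k \in [0,\pi/2]$, this yields $\Sin^2(\theta_x) \geq \sin^2(\beta_k)$. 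Summing over shells,
\begin{equation*}
    \sum_{x \in V} \Sin^2(\theta_x) \geq \sum_{k=0}^{M+1} \p[\big]{\card{C_{\beta_k}} - \card{C_{\beta_{k-1}}}} \sin^2(\beta_k).
\end{equation*}

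Writing $n_k \defined \card{C_{\beta_k}}$ and $a_k \defined \sin^2(\beta_k)$, Abel's summation with the boundary values $n_{-1} = 0$ and $a_{M+1} = 0$ rewrites the right-hand side as $\sum_{k=0}^{M} n_k (a_k - a_{k+1})$. The coefficients $a_k - a_{k+1} \geq 0$ because $\beta_k$ is decreasing on $[0,\pi/2]$, so we may replace $n_k$ by its lower bound $c_k$. Undoing the Abel summation (using $c_{-1} = 0$ and $a_{M+1} = 0$) recovers exactly $\sum_{k=0}^{M} (c_k - c_{k-1}) a_k$, proving the first inequality.

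For the second inequality, I would use $a_k \geq a_M$ for $0 \leq k \leq M$ and the telescoping identity $\sum_{k=0}^{M}(c_k - c_{k-1}) = c_M$ (since $c_{-1} = 0$ and all summands are non-negative by the chain $c_0 < c_1 < \dotsb < c_M$). The argument is essentially bookkeeping; the only subtle point to check is that the boundary conventions $\beta_{-1} = \pi$, $c_{-1} = 0$ and $\beta_{M+1} = 0$, $c_{M+1} = n$ are compatible with $\Sin^2(\beta_{M+1}) = 0$ on one side and with the Abel summation producing non-negative coefficients on the other.
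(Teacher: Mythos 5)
Your proof is correct and takes essentially the same route as the paper: both decompose $V$ into the shells $C_{\beta_k}\setminus C_{\beta_{k-1}}$, lower-bound $\Sin^2(\theta_x)$ on each shell by the value at the inner threshold, and perform a summation-by-parts so that the (non-negative) coefficients multiply $\card{C_{\beta_k}}$, allowing the substitution $\card{C_{\beta_k}} \geq c_k$. The paper packages the rearrangement as a telescoping expansion of $1-\cos^2(\beta_{s_x})$ followed by a swap of the order of summation, which is the same computation as your explicit Abel step.
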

\begin{proof}
Since $H(\theta)$ is a non-negative non-increasing function, we have 
\begin{align*}
    \sum_{x\in V} \Sin(\theta_x)
    &\geq \sum_{x\in \cC_{\beta_M}} \Sin(\theta_x)
    \geq \sum_{x\in \cC_{\beta_M}}\sin^2(\theta_x)\\
    &\geq \sum_{x\in \cC_{\beta_M}}\sin^2(\beta_M)
    = \abs{\cC_{\beta_M}} \sin^2(\beta_M) \geq c_M\sin^2(\beta_M). \qedhere
\end{align*}
\end{proof}

By \Cref{cor:sin-bound} and \Cref{lem:amplification}, we have
\begin{equation*}
	 \frac{5 \anorm^2 n}{2}
	 \geq \sum_{x \in V} \Sin(\theta_x)
	 \geq c_M \sin^2(\beta_M).
\end{equation*}
This means that we reach a contradiction if
\begin{equation}
\label{eq:main-contradiction}
	c_M \sin^2(\beta_M) > \frac{5 \anorm^2 n}{2}.
\end{equation}
Therefore, \Cref{thm:amplification-main} will be proved if we construct a sequence of angles such that the last angle satisfies~\eqref{eq:main-contradiction}.
Clearly, we need to control the increments $\beta_{k} - \beta_{k-1}$ and the corresponding lower bounds $c_k$.
We state the two key lemmas to control them.

\begin{lemma}
\label{lem:amplification-simple}
Let $\theta$ be a stable state on an $(n, \degm, \anorm, \lnorm^-, \lnorm^+)$-expander and consider $0 < \eps < 1 + \lnorm^-$ and $\rho > 0$.
If the angles $0 < \gamma < \beta \leq \pi/2$ satisfy
\begin{equation*}
\sin(\beta - \gamma) \geq \frac{2(1 + \rho)\anorm}{(1 + \lnorm^- - \eps)},
\end{equation*}
then the following holds:
\begin{equation*}
\card{\cC_\gamma} \geq \min \set[\Big]{
\p[\Big]{1 + \frac{\eps}{\lnorm^+ - \lnorm^-}} \card{\cC_\beta}
,\; \rho \anorm n
,\; n/2 }.
\end{equation*}
\end{lemma}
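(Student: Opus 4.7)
The plan is to argue by contradiction. The conclusion is trivial if $\card{C_\beta} = 0$, so assume $\card{C_\beta} \geq 1$, and suppose that
\begin{equation*}
    \card{C_\gamma} < \min\set[\big]{ \p[\big]{1 + \eps/(\lnorm^+ - \lnorm^-)}\,\card{C_\beta},\; \rho\anorm n,\; n/2 }.
\end{equation*}
Setting $X \defined C_\beta$ and $Y \defined C_\gamma$, we have $X \subseteq Y$, and each assumption of \Cref{lem:eXYc-smallX} (with $\delta = \eps/(\lnorm^+ - \lnorm^-)$) holds strictly. By \Cref{rem:strict}, this yields the strict edge bound
\begin{equation}
\label{eq:plan-strict-edge}
    e(X, \setcomp Y) > \frac{1 + \lnorm^- - \eps}{2(1+\rho)\anorm}\, e(X, X).
\end{equation}

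The other ingredient is the kernel stability condition \Cref{lem:kernel-stability}, applied at each $y \in X$ and summed:
\begin{equation*}
    0 \leq \sum_{y \in X} \sum_{x \in V} A_{x,y}\, K(\theta_x, \theta_y).
\end{equation*}
I would bound $K(\theta_x, \theta_y)$ pointwise for $y \in X$ by a short case analysis on whether $\abs{\theta_y} \leq \pi/2$ or $\abs{\theta_y} > \pi/2$ in the definition~\eqref{eq:kernel-definition}:
\begin{enumerate}
    \item for $x \in X$, the trivial bound $K(\theta_x, \theta_y) \leq 1$;
    \item for $x \in Y \setminus X$ (so $\gamma \leq \abs{\theta_x} < \beta \leq \pi/2$), $K(\theta_x, \theta_y) \leq 0$ in both branches (by monotonicity of $\sin$ on $[-\pi/2, 0]$ in the first, and by $\cos \abs{\theta_x} > 0$ in the second);
    \item for $x \in \setcomp Y$ (so $\abs{\theta_x} < \gamma$), $K(\theta_x, \theta_y) \leq -\sin(\beta - \gamma)$ in both branches, using $\beta \leq \pi/2$ and $\cos \gamma = \sin(\pi/2 - \gamma) \geq \sin(\beta - \gamma)$ in the second branch.
\end{enumerate}
Summing these contributions yields $e(X, X) \geq \sin(\beta - \gamma)\, e(X, \setcomp Y)$.

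Combining with the hypothesis $\sin(\beta - \gamma) \geq 2(1+\rho)\anorm/(1+\lnorm^- - \eps)$ and~\eqref{eq:plan-strict-edge} gives
\begin{equation*}
    e(X,X) \geq \sin(\beta - \gamma)\, e(X, \setcomp Y) > \sin(\beta - \gamma) \cdot \frac{1 + \lnorm^- - \eps}{2(1+\rho)\anorm}\, e(X,X) \geq e(X,X),
\end{equation*}
a contradiction when $e(X,X) > 0$; the degenerate case $e(X,X) = 0$ also contradicts~\eqref{eq:plan-strict-edge} together with the kernel bound, since then $e(X, \setcomp Y) > 0$ while the kernel bound forces $e(X, \setcomp Y) \leq 0$. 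The main obstacle is the piecewise kernel case analysis in (ii) and (iii), where both branches of $K$ must be handled separately, and the careful bookkeeping of strictness: since the hypothesis on $\sin(\beta - \gamma)$ is itself non-strict, the strict inequality driving the contradiction must be traced via \Cref{rem:strict} back to the strictly violated contradictive assumption.
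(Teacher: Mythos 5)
Your proof is correct and follows the same approach as the paper: apply the kernel stability condition at each $y \in C_\beta$, split the inner sum over $x$ into $C_\beta$, $C_\gamma \setminus C_\beta$, and $\setcomp{C_\gamma}$ with the pointwise bounds $K \leq 1, 0, -\sin(\beta-\gamma)$ respectively, and close the argument against the strict edge-count bound from \Cref{lem:eXYc-smallX}. The only cosmetic difference is that you frame it as a direct proof by contradiction (strictly violating all three cardinality bounds at once, using $C_\beta \subseteq C_\gamma$ to pass the $\rho\anorm n$ bound from $C_\gamma$ to $C_\beta$), whereas the paper shows the three assumptions cannot simultaneously hold and then reads off the conclusion; these are logically equivalent.
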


\begin{lemma}
\label{lem:amplification-cancelation}
Let $\theta$ be a stable state on an $(n, \degm, \anorm, \lnorm^-, \lnorm^+)$-expander and consider $0 < \eps < 1 + \lnorm^-$.
If the angles $0 < \gamma < \beta \leq \pi/2$ satisfy
\begin{equation*}
	\sin(\beta - \gamma) \geq \frac{2(1+ \lnorm^+ + \anorm)\card{\cC_{\pi/2}}}{(1 + \lnorm^- - \eps) \card{\cC_\beta}},
\end{equation*}
then the following holds:
\begin{equation*}
	\card{\cC_\gamma} \geq \min \set[\Big]{
	\p[\Big]{1 + \frac{\eps}{\lnorm^+ - \lnorm^-}} \card{\cC_\beta}
	,\;  n/2 }.
\end{equation*}
\end{lemma}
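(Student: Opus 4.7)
The plan is to argue by contradiction: suppose that $\card{C_\gamma} < \min\set{(1+\delta)\card{C_\beta},\, n/2}$ with $\delta = \eps/(\lnorm^+ - \lnorm^-)$, and write $X = C_\beta$, $Y = C_\gamma$, $Z = C_{\pi/2}$, noting that $Z \subseteq X \subseteq Y$. The core of the argument will be to sum the kernel stability inequality from \Cref{lem:kernel-stability} over all $y \in X$ and extract from it the clean inequality
\begin{equation*}
\sin(\beta-\gamma)\, e(X, \setcomp{Y}) \leq e(X, Z).
\end{equation*}
Once this is established, combining it with \Cref{lem:eXYc} (whose hypotheses are met by the contradiction assumption), which gives $e(X,\setcomp{Y}) \geq (1+\lnorm^- - \eps)\degm\card{X}/2$, and with the maximum-degree bound $\degmax \leq (1+\lnorm^+ + \anorm)\degm$ from \Cref{lem:eXV}, which gives $e(X,Z) \leq e(Z,V) \leq (1+\lnorm^+ + \anorm)\degm\card{Z}$, immediately produces $\sin(\beta-\gamma) \leq 2(1+\lnorm^+ + \anorm)\card{Z}\,/\,[(1+\lnorm^- - \eps)\card{X}]$, contradicting the hypothesis. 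The strictness needed to conclude is provided by \Cref{rem:strict}.

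To produce the key inequality, I would split $\sum_{y \in X}\sum_x A_{x,y} K(\theta_x,\theta_y)$ into the contributions from $y \in X \setminus Z$ (where $K = \sin(\abs{\theta_x} - \abs{\theta_y})$) and from $y \in Z$ (where $K = -\cos \theta_x$). The crucial cancellation is antisymmetry: since $A_{x,y} = A_{y,x}$ and $\sin(\abs{\theta_x} - \abs{\theta_y})$ flips sign under $x \leftrightarrow y$, the double sum over $x,y \in X \setminus Z$ vanishes. For the remaining terms with $y \in X \setminus Z$ I use $\sin(\abs{\theta_x} - \abs{\theta_y}) \leq 1$ when $x \in Z$, $\leq 0$ when $x \in Y \setminus X$, and $\leq -\sin(\beta-\gamma)$ when $x \notin Y$ (the last by monotonicity of $\sin$ on $[-\pi/2, \pi/2]$, using that $\abs{\theta_y} < \pi/2$).

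For $y \in Z$ I would rewrite $\sum_{y \in Z}\sum_x A_{x,y} K = \sum_x (-\cos\theta_x)\,e(\set{x}, Z)$, and case-split over $x$: on $Z$ one has $-\cos\theta_x \leq 1$; on $X \setminus Z$ one has $-\cos\theta_x \leq 0$; on $Y \setminus X$ one has $-\cos\theta_x \leq -\cos\beta \leq 0$; and on $\setcomp{Y}$ one has $-\cos\theta_x \leq -\cos\gamma \leq -\sin(\beta-\gamma)$. The last inequality is the decisive step and uses $\cos\gamma \geq \sin(\beta-\gamma)$, which holds since $\cos\gamma = \sin(\pi/2 - \gamma) \geq \sin(\beta - \gamma)$ whenever $\beta \leq \pi/2$. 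Assembling both pieces,
\begin{equation*}
0 \leq [e(X \setminus Z, Z) + e(Z, Z)] - \sin(\beta-\gamma)\,[e(X \setminus Z, \setcomp{Y}) + e(Z, \setcomp{Y})] = e(X, Z) - \sin(\beta-\gamma)\,e(X, \setcomp{Y}),
\end{equation*}
which is the desired inequality.

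The main obstacle will be precisely the case analysis for $y \in Z$: the kernel there has the rather different form $-\cos\theta_x$, and to get a bound whose $\sin(\beta-\gamma)$ coefficient matches what comes out of the $y \in X \setminus Z$ piece, one must recognise that the $-\cos\gamma$ threshold on $\setcomp{Y}$ is in fact at least $\sin(\beta-\gamma)$ for $\beta \leq \pi/2$. This is what allows the two contributions to be packaged together into the clean ratio $e(X,\setcomp{Y})/e(X,Z)$ on which the expansion bounds act directly, yielding the factor $2$ in the hypothesis.
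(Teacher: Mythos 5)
Your proof is correct and follows essentially the same route as the paper: sum the kernel stability inequality over $y\in C_\beta$, let the antisymmetric part of the kernel cancel on the $C_\beta\times C_\beta$ block so that only an $e(C_{\pi/2},C_\beta)$ term survives, bound the $C_\beta\times\setcomp{C_\gamma}$ contribution by $-\sin(\beta-\gamma)\,e(C_\beta,\setcomp{C_\gamma})$, and close with \Cref{lem:eXV} and \Cref{lem:eXYc}. The only cosmetic difference is that the paper packages the diagonal-block cancellation as an explicit ``almost anti-symmetry'' inequality $K(\alpha,\beta)+K(\beta,\alpha)\leq \ind_{\abs{\alpha}\geq\pi/2}+\ind_{\abs{\beta}\geq\pi/2}$, whereas you unfold the same case analysis by splitting $y\in C_\beta$ into $y\in C_{\pi/2}$ versus $y\notin C_{\pi/2}$ and checking each $x$-region directly (including the needed observation that $\cos\gamma\geq\sin(\beta-\gamma)$ when $\beta\leq\pi/2$).
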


The main takeaway from these two lemmas is that under suitable conditions, we can prove that $\card{\cC_{\gamma - \eta}} \geq (1 + \delta)\card{\cC_{\gamma}}$ for some $\delta > 0$, given that $\eta > 0$ is not so small.
This result will allow us to amplify our initial lower bound of $\card{\cC_{\pi/2}} \geq c_0$ to a lower bound of $\card{\cC_{\beta_M}} > \p[\big]{5 \anorm^2 / 2 \sin^2(\beta_M)} n$.

\begin{proof}[Proof of \Cref{lem:amplification-simple}]
By \Cref{lem:kernel-stability}, for any $y \in V$, we know that
\begin{equation*}
	\sum_{x \in V} A_{x,y}K(\theta_x, \theta_y) \geq 0.
\end{equation*}
We sum the above inequality over all $y \in \cC_\beta$ to obtain that
\begin{equation*}
	\sum_{y \in \cC_{\beta}}\sum_{x \in V} A_{x,y}K(\theta_x, \theta_y) \geq 0.
\end{equation*}
We split the sum over $x \in V$ into three parts as follows,
\begin{equation}
\label{eq:kernel-sum-decomposition}
	\sum_{\substack{x \in \cC_\beta \\ y \in \cC_\beta}} A_{x,y} K(\theta_x, \theta_y)
	+ \sum_{\substack{x \in \cC_\gamma \setminus \cC_\beta \\ y \in \cC_\beta}} A_{x,y}K(\theta_x, \theta_y)
	+ \sum_{\substack{x \in \cC_\gamma^\comp \\ y \in \cC_\beta}} A_{x,y}K(\theta_x, \theta_y)
	\geq 0.
\end{equation}
Recall the definition of the kernel~\eqref{eq:kernel-definition}.
Since $y \in \cC_\beta$ and $0 < \gamma < \beta \leq \pi/2$, after some case checking, we see that the following inequality holds:
\begin{equation*}
	K(\theta_x, \theta_y)
	\leq \begin{cases}
		1, & x \in \cC_\beta, \\
		0, & x \in \cC_\gamma \setminus \cC_\beta, \\
		\sin(\gamma -\beta ), & x \in \cC_\gamma^\comp.
	\end{cases}
\end{equation*}

Therefore, from~\eqref{eq:kernel-sum-decomposition}, we have
\begin{equation*}
	\sum_{\substack{x \in \cC_\beta \\ y \in \cC_\beta}} A_{x,y} \cdot 1
	+ \sum_{\substack{x \in \cC_\gamma \setminus \cC_\beta \\ y \in \cC_\beta}} A_{x,y} \cdot 0
	+ \sum_{\substack{x \in \cC_\gamma^\comp \\ y \in \cC_\beta}} A_{x,y} \cdot \sin(\gamma - \beta) \geq 0,
\end{equation*}
which simplifies to
\begin{equation}
\label{eq:kernel-edge-count}
	e(\cC_\beta, \cC_\beta) + \sin(\gamma -\beta)  e(\cC_\beta, \cC_\gamma^\comp) \geq 0.
\end{equation}

Now we can use the fact that we have an $(n, \degm, \anorm, \lnorm^-, \lnorm^+)$-expander graph and use expansion to give a lower bound on $e(\cC_\beta, \cC_\gamma^\comp)$.
Indeed, by \Cref{lem:eXYc-smallX} (and \Cref{rem:strict}), under the assumption that $\card{\cC_\gamma} < n/2$, that $\card{\cC_\gamma} < \p[\big]{1 + \eps/(\lnorm^+ - \lnorm^-)} \card{\cC_\beta}$ and that $\card{\cC_\beta} < \rho \anorm n$, we have
\begin{equation*}
    e(\cC_\beta, \cC_\gamma^\comp) > \frac{(1 + \lnorm^- - \eps)}{2(1+ \rho)\anorm} \, e(\cC_\beta, \cC_\beta).
\end{equation*}
Thus, under these assumptions and the lower bound on $\sin(\beta - \gamma)$, \eqref{eq:kernel-edge-count} gives
\begin{equation*}
	e(\cC_\beta, \cC_\beta) \geq \sin(\beta - \gamma) e(\cC_\beta, \cC_\gamma^\comp) > e(\cC_\beta, \cC_\beta),
\end{equation*}
a contradiction.
Therefore, the aforementioned assumptions cannot all hold,
which gives
\begin{equation*}
	\card{\cC_\gamma} \geq \min \set[\Big]{
	\p[\Big]{1 + \frac{\eps}{\lnorm^+ - \lnorm^-}} \card{\cC_\beta}
	,\; \rho \anorm n
	,\; n/2 },
\end{equation*}
where in the case that $\card{\cC_\beta} < \rho \anorm n$, we used that $\cC_\beta \subseteq \cC_\gamma$.
\end{proof}

\begin{proof}[Proof of \Cref{lem:amplification-cancelation}]
The proof of this lemma is similar to the proof of \Cref{lem:amplification-simple}.
The main difference is that in the first sum in~\eqref{eq:kernel-sum-decomposition} we use the fact that the kernel $K$ is almost anti-symmetric.
That is, the following inequality holds:
\begin{equation*}
	K(\alpha, \beta) + K(\beta, \alpha) \leq
	\begin{cases}
		0, & \abs{\alpha}, \abs{\beta} < \pi/2, \\
		1, & \abs{\alpha} < \pi/2, \abs{\beta} \geq \pi/2,\\
		1, & \abs{\alpha} \geq \pi/2, \abs{\beta} < \pi/2,\\
		2, & \abs{\alpha}, \abs{\beta} \geq \pi/2,
\end{cases}
\end{equation*}
or written compactly, writing with indicator functions, we have
\begin{equation*}
	K(\alpha, \beta) + K(\beta, \alpha) \leq \ind_{\abs{\alpha} \geq \pi/2} + \ind_{\abs{\beta} \geq \pi/2}.
\end{equation*}

Together with the symmetry of the adjacency matrix, this almost anti-symmetry of the kernel gives that
\begin{align*}
	\sum_{x,y \in \cC_\beta} A_{x,y} K(\theta_x, \theta_y)
	&= \frac{1}{2} \sum_{x,y \in \cC_\beta} A_{x,y} \p[\big]{K(\theta_x, \theta_y) + K(\theta_y, \theta_x)} \\
	&\leq \frac{1}{2} \sum_{x,y \in \cC_\beta} A_{x,y} \p[\big]{\ind_{\abs{\theta_x} \geq \pi/2} + \ind_{\abs{\theta_y} \geq \pi/2}}
	= e(\cC_{\pi/2}, \cC_\beta),
\end{align*}
where in the last inequality we used the fact that $\cC_{\pi/2} \subseteq \cC_\beta$.
Clearly, $e(\cC_{\pi/2}, \cC_\beta)$ is at most $e(\cC_{\pi/2}, V)$, so by \Cref{lem:eXV}, we have
\begin{equation*}
	e(\cC_{\pi/2}, \cC_\beta) \leq e(\cC_{\pi/2}, V)
	\leq (1 + \lnorm^+ + \anorm) \card{\cC_{\pi/2}}.
\end{equation*}

\noindent Therefore, from~\eqref{eq:kernel-sum-decomposition}, we have
\begin{equation*}
	e(\cC_{\pi/2}, \cC_\beta) \geq \sin(\beta - \gamma)  e(\cC_\beta, \cC_\gamma^\comp).
\end{equation*}

\noindent Under the assumptions that $\card{\cC_\gamma} < n/2$ and $\card{\cC_\gamma} < \p[\big]{1 + \eps/(\lnorm^+ - \lnorm^-)} \card{\cC_\beta}$, \Cref{lem:eXYc} (and \Cref{rem:strict}) gives that
\begin{equation*}
	e(\cC_\beta, \cC_\gamma^\comp) > \p[\big]{1 + \lnorm^- - \eps} (\degm/n) \card{\cC_\beta} \card{\cC_\gamma^\comp}.
\end{equation*}
By the hypothesis on $\sin(\beta - \gamma)$, we have
\begin{align*}
	(1 + \lnorm^+ + \anorm)\degm \card{\cC_{\pi/2}}
	&\geq e(\cC_\beta, \cC_{\pi/2})
	\geq \sin(\beta - \gamma) e(\cC_\beta, \cC_\gamma^\comp) \\
	&> 2(1+ \lnorm^+ + \anorm) (\degm/n) \card{\cC_{\pi/2}} \card{\cC_\gamma^\comp}
	\geq (1 + \lnorm^+ + \anorm) \degm \card{\cC_{\pi/2}},
\end{align*}
where in the last step we used that $\card{\cC_\gamma^\comp} \geq n/2$.
This is a contradiction to our assumptions on $\cC_\gamma$, therefore
\begin{equation*}
	\card{\cC_\gamma} \geq \min \set[\Big]{
	\p[\Big]{1 + \frac{\eps}{\lnorm^+ - \lnorm^-}} \card{\cC_\beta}
	,\;  n/2 }. \qedhere
\end{equation*}
\end{proof}

We are finally ready to prove the main result of this section.

\begin{proof}[Proof of \Cref{thm:amplification-main}]
Let $G$ be an $(n, \degm, \anorm, \lnorm^-, \lnorm^+)$-expander graph with $\lnorm^- > -1$ and $\anorm \leq 1/5$.
We define
\begin{equation*}
	L \defined \log\p[\Big]{\frac{1 + \lnorm^+ + \anorm}{2 \anorm}},
\end{equation*}
and recall the main hypothesis that
\begin{equation*}
\tag{\ref{eq:main-condition}}
    \frac{32 \anorm (1 + 4\lnorm^+ - 3\lnorm^-) L}{(1 + \lnorm^-)^2} < 1,
\end{equation*}
First of all, we notice some numerical consequences of the hypotheses.
Since we assumed that $-1 < \lnorm^- \leq 0 \leq \lnorm^+$ and that $\anorm \leq 1/5$, we have $L \geq \log 3 > 1$ and $\anorm / (1 + \lnorm^-) < 1/32$.

As we have seen from~\eqref{eq:main-contradiction} in this section, it is enough to construct a sequence of angles $\pi/2 = \beta_0 > \beta_1 > \dotsb > \beta_M \geq 0$ and a sequence $c_0 < c_1 < \dotsb < c_M$ of bounds with $\card{\cC_{\beta_k}} \geq c_k$ for $0 \leq k \leq M$, such that $c_M \sin^2(\beta_M) > 5 \anorm^2 n / 2$.
For convenience, we set $\beta_{-1} = \pi$ and $\beta_{M+1} = 0$, $c_{-1} = 0$ and $c_{M+1} = n$.

We proceed by constructing the sequences $\beta_k$ and $c_k$.
Fix $\eps \defined (1 + \lnorm^-) / 2$ and $\delta \defined \eps/(\lnorm^+ - \lnorm^-) = (1 + \lnorm^-)/(2(\lnorm^+ - \lnorm^-))$.
We start with $\beta_0 \defined \pi/2$ and $c_0 \defined \card{\cC_{\pi/2}}$.

There are two stages in the construction of the angles, depending on how large we can guarantee $\card{\cC_{\beta_k}}$ to be.
Ideally, we would repeatedly use \Cref{lem:amplification-cancelation} from the start to construct the required sequence of angles.
However, the required lower bound on $\sin(\beta - \gamma)$ may be unfeasible when $\card{\cC_{\beta_k}}$ is not already of a reasonable size.
Therefore, we rely on \Cref{lem:amplification-simple} with $\rho = 1$ while $\card{\cC_{\beta_k}}$ is still small.
Indeed, that is always possible since $8 \anorm / (1 + \lnorm^-) < 1$ follows from~\eqref{eq:main-condition}, as discussed before.

Assume that we already have $\beta_k$ and $c_k$ defined, so we choose $\beta_{k+1}$ and give a lower bound $\card{\cC_{\beta_{k+1}}} \geq c_{k+1}$ as follows.

\begin{description}[leftmargin=0cm]
\item[Case 1 ($c_k < (1 + \lnorm^+ + \anorm) c_0 / 2 \anorm$)]
We set
\begin{align*}
\beta_{k+1} &\defined
    \beta_k - \sin^{-1} \p[\bigg]{ \frac{8\anorm}{1+ \lnorm^-} }, \\
c_{k+1} &\defined
    \min \set[\big]{ (1 + \delta) \card{\cC_{\beta_k}} ,\, \anorm n}.
\end{align*}
\item[Case 2 ($c_k \geq (1 + \lnorm^+ + \anorm) c_0 / 2 \anorm$)]
We set
\begin{align*}
\beta_{k+1} &\defined
    \beta_k - \sin^{-1} \p[\bigg]{ \frac{4(1 + \lnorm^+ + \anorm)c_0}{(1+ \lnorm^-) \card{\cC_{\beta_k}}} }, \\
c_{k+1} &\defined
    \min \set[\big]{ (1 + \delta) \card{\cC_{\beta_k}} ,\, n/2 }.
\end{align*}
\end{description}

By \Cref{lem:amplification-simple} with $\rho = 1$ and \Cref{lem:amplification-cancelation}, we indeed have that $\card{\cC_{\beta_{k+1}}} \geq c_{k+1}$ in both cases.
Recall that $\anorm \leq 1/5$, so $\anorm n \leq n/2$.

We define the critical step $k^\ast$ to be the smallest $k$ such that either $c_k \geq \anorm n$ or
\begin{equation*}
	c_k \geq (1 + \delta)^{k}c_0 \geq \frac{(1 + \lnorm^+ + \anorm)c_0}{2\anorm}.
\end{equation*}
This implies that $k^\ast \leq \ceil[\big]{ L / \log\p[\big]{1 + \delta} }$.
Now, either $L / \log(1 + \delta) \leq 1$ and $k^\ast = 1$, or
\begin{equation*}
    k^\ast
    \leq \max \set[\Big]{1, \frac{2L}{\log(1+\delta)} }
    \leq \max \set[\Big]{1, \frac{(2 + \delta) L}{\delta}} = \max \set[\Big]{1, \frac{(1 + 4 \lnorm^+ - 3\lnorm^-) L}{1 + \lnorm^-}},
\end{equation*}
where we used that $\ceil{x} \leq 2x$ for $x \geq 1$ and $\log(1 + x) \geq 2x/(2 + x)$ for $x \geq 0$.

The construction of the angles goes as follows.
We update $c_k$ with the Case 1 rules until we reach $k^\ast$.
If $c_{k^\ast} \geq \anorm n$, then we set $M = k^\ast$ and stop.
Otherwise, we update $c_k$ for $k > k^\ast$ using the Case 2 rule until $c_k = n/2$.
We then set $M$ so that $c_M = n/2$.
We claim that we always get $c_M \sin^{2}(\beta_M) > 5\anorm^2 n/2$.

\begin{description}[leftmargin=0cm]
\item[Case 1 (Stop with $M = k^\ast$ and $c_M = \anorm n$)]

Using the fact that $\sin^{-1}(x) \leq \pi x/2$ for any $0 \leq x \leq 1$, we give a lower bound on $\beta_M$ as follows,
\begin{align*}
	\frac{\pi}{2} - \beta_M &= \beta_0 - \beta_M = \sum_{k = 0}^{k^\ast-1} (\beta_{k} - \beta_{k+1}) \\
	&\leq k^\ast \sin^{-1}\p[\Big]{\frac{8 \anorm}{1 + \lnorm^-}}
	\leq \frac{\pi}{8} \p[\Big]{\frac{32 \anorm k^\ast}{1 + \lnorm^-}} < \frac{\pi}{8},
\end{align*}
where in the last step, we used the hypothesis~\eqref{eq:main-condition} and the upper bound above for $k^{\ast}$.
Indeed, if $k^\ast = 1$, recall that we had $\anorm / (1 + \lnorm^-)  < 1/32$.
If $k^\ast \leq (2 + \delta)L/\delta$, the inequality above is precisely condition~\eqref{eq:main-condition}.
Therefore, $\beta_M > 3\pi/8$ and $\card{\cC_{3\pi/8}} \geq \anorm n \geq 5 \anorm^2 n$, so we have $c_M \sin^2(\beta_M) > 5 \anorm^2 n / 2$.

\item[Case 2 (Stop with $M > k^\ast$ and $c_M = n/2$)]

Again, we need to give a lower bound on $\beta_M$.
Note that
\begin{align*}
	\frac{\pi}{2} - \beta_M = \beta_0 - \beta_M \leq \sum_{k = 0}^{k^\ast - 1}(\beta_k - \beta_{k+1}) + \sum_{k = k^\ast}^{M-1}(\beta_k - \beta_{k+1}).
\end{align*}
The first sum we bound as in Case 1, so we obtain
\begin{equation*}
    \sum_{k = 0}^{k^\ast - 1}(\beta_k - \beta_{k+1})
    \leq \frac{\pi}{8} \p[\Big]{\frac{32 \anorm k^\ast}{1 + \lnorm^-}} < \frac{\pi}{8}.
\end{equation*}
For the second sum, we observe that $\card{\cC_{\beta_k}} \geq c_k \geq (1 + \delta)^k c_0$, thus
\begin{align*}
    \sum_{k = k^\ast}^{M-1} (\beta_k - \beta_{k+1})
    &\leq \sum_{k = k^\ast}^{M-1} \sin^{-1} \p[\Big]{ \frac{4(1+\lnorm^+ + \anorm)c_0}{(1+ \lnorm^-) \card{\cC_{\beta_k}}} }
    \leq \sum_{k = k^\ast}^{M - 1} \frac{\pi}{2} \, \frac{4(1+\lnorm^+ + \anorm)c_0}{(1+ \lnorm^-) \card{\cC_{\beta_k}}} \\
    &\leq \frac{2 \pi (1+\lnorm^+ + \anorm)}{(1+\lnorm^-)} \sum_{k = k^\ast}^\infty (1 + \delta)^{-k}
    = \frac{2\pi (1+\lnorm^+ + \anorm)}{(1+\lnorm^-)} \, \frac{(1+\delta)}{\delta (1 + \delta)^{k^\ast}} \\
    &\leq \frac{4 \pi \anorm (1 + 2\lnorm^+ - \lnorm^-)}{(1 + \lnorm^-)^2}
    \leq \frac{\pi}{8}\p[\Big]{\frac{32 \anorm (1 + 4 \lnorm^+ - 3 \lnorm^-) L}{(1 + \lnorm^-)^2}}
    < \frac{\pi}{8}.
\end{align*}
We have used that $(1 + \delta)^{k^\ast} \geq (1 + \lnorm^+ + \anorm)/ 2\anorm$, and \eqref{eq:main-condition} on the last step.
This gives
\begin{equation*}
	\beta_M > \frac{\pi}{2} - \frac{\pi}{8} - \frac{\pi}{8} = \frac{\pi}{4}.
\end{equation*}
But $c_M \sin^2(\beta_M) > n \sin^2(\pi/4)/2 = n/4$, so $c_M \sin^2(\beta_M) > 5 \anorm^2 n / 2$ as $\anorm \leq 1/5$.
\end{description}

In any case, we have $c_M \sin^2(\beta_M) > 5 \anorm^2 n / 2$, so the proof is finished.
\end{proof}


\section{Application: Ramanujan graphs and random regular graphs}
\label{sec:numerics}

In this section, we indicate how to perform the amplification argument in the proof of \Cref{thm:amplification-main} numerically to obtain \Cref{thm:main-synchrony-regular}, namely that any $d$-regular $(n, d, \anorm)$-expander graph with $\anorm \leq 0.0816$ must be globally synchronizing.
As discussed in \Cref{cor:main-ramanujan} in the introduction, this implies that Ramanujan graphs, namely $d$-regular $(n,d,\anorm)$-expanders with $\anorm = 2 (\sqrt{d-1})/d$, are globally synchronizing if $d \geq 600$.
Furthermore, by a result of Friedman~\cite{Friedman2008-sf}, this also implies that the probability that a graph uniformly chosen between the $d$-regular graphs with $n$ vertices goes to one as $n$ goes to infinity, also when $d \geq 600$.

If we apply \Cref{thm:amplification-main}, then we would obtain the same conclusion for $\anorm \leq 0.0068$, which translates to $d$-regular Ramanujan graphs with $d \geq 85730$.
We perform the amplification argument numerically to push up our bound to $\anorm \leq 0.0816$ (or $d \geq 600)$.
We note that there is a hard barrier to our method at $\anorm \leq 0.24585$ (or $d \geq 66$), as we do not have a  result analogous to \Cref{lem:rhobounds}, even with the adaptations discussed below.
Obtaining global synchrony with $d \geq 3$ would require $\anorm \leq 0.943$, which is beyond our current techniques.

One source of the improvements comes from carefully tracking the consequences of using \eqref{eq:rho1-inequality-regular} instead of \eqref{eq:rho1-inequality} in \Cref{lem:rhobounds}.
This is what leads to the aforementioned limitation of $\anorm \leq 0.24585$ in place of $\anorm \leq 1/5$, see \Cref{rmk:regular}.
We also noticed that there is a gain in tweaking the parameters $\eps$ and $\rho$ in \Cref{lem:amplification-simple} and $\eps$ in \Cref{lem:amplification-cancelation} independently.
Another source of improvement is by using the actual values of $\sin^{-1}$ in the angle increments, instead of bounding via the inequality $\sin^{-1}(x) \leq \pi x / 2$.
To obtain a result independent of $n$, we finish off the amplification with the geometric sum as in Case 2 in the proof of \Cref{thm:amplification-main}.

For instance, to obtain synchronization for $\anorm \leq 0.0816$, we apply \Cref{lem:amplification-simple} three times with $\eps = 0.23$ and $\rho = 0.38$ and \Cref{lem:amplification-cancelation} three times with $\eps = 0.184$, after which we can perform the geometric series and finish the argument.
We obtain a sequence of angles guaranteed to satisfy $\beta_i \geq 0.132$ for all $i$.
This leads to the desired contradiction, and \Cref{thm:main-synchrony-regular} is proven.
The parameters obtained through this argument are exhibited in~\Cref{tab:amplification} and the details are provided below.

\begin{proof}[Proof of \Cref{thm:main-synchrony-regular}]
Let $G$ be a $d$-regular $(n,d,\anorm)$-expander with $\anorm = 0.0816$ and let $\theta$ be a stable state in $G$.
We may assume by a rotation that $\rho_1(\theta) \geq 0$.
We prove by contradiction that $\theta$ must be fully synchronized, which is equivalent by the half-circle lemma (\Cref{lem:half-circle}) to $\cC_{\pi/2} = \emptyset$.
Assume that $\card{\cC_{\pi/2}} \geq 1$.
Using \Cref{rmk:regular}, we obtain a refinement of the contradiction condition \eqref{eq:main-contradiction}.
Indeed, a contradiction is reached if there is an angle $0 < \xi < \pi/2$ such that $\card{\cC_\xi} \geq 5 \anorm^2 / 4 \sin^2(\xi)$.
In particular, we have a contradiction if
\begin{equation}
\label{eq:contradiction-concrete}
    \card{\cC_\xi} \geq \frac{0.00832 n}{\sin^2(\xi)}.
\end{equation}
We build a sequence of angles $\beta_i$ and a sequence of constants $c_i$ satisfying $\card{\cC_{\beta_i}} \geq c_i\card{\cC_{\pi/2}}$, starting with $\beta_0 = \pi/2$ and $c_0 = 1$. 
Note that \Cref{lem:amplification-simple} with $\eps = 0.23$ and $\rho = 0.38$ says that we have
\begin{equation}
\label{eq:amp1-concrete}
    \card{\cC_{\beta_{i+1}}} \geq \min\set[\big]{ 2.4093 \card{\cC_{\beta_i}} ,\; 0.031 n},
\end{equation}
as long as $\beta_{i} - \beta_{i+1} \geq \sin^{-1}(0.3272)$.
Whenever we apply this lemma, we obtain a new lower bound on $\card{\cC_{\beta_{i+1}}}$ given by \eqref{eq:amp1-concrete}.
We may assume that the lower bound is $2.4093 \card{\cC_{\beta_i}}$ as long as $0.031n \geq 0.00832n/ \sin^2(\beta_{i+1})$, since this would lead to a contradiction to \eqref{eq:contradiction-concrete}.
We then apply \Cref{lem:amplification-simple} for a total of three times to obtain that $\card{\cC_{\beta_3}} \geq c_3 \card{\cC_{\pi/2}}$ for $\beta_3 = \pi/2 - 3 \sin^{-1}(0.3272) \geq 0.570$ and $c_3 = 13.985$.

We now switch to \Cref{lem:amplification-cancelation} with $\eps = 0.184$, which states that as long as $\beta_{i} - \beta_{i+1} \geq \sin^{-1}\p[\big]{3.168 \card{\cC_{\pi/2}}/\card{\cC_{\beta_i}}}$, we have
\begin{equation}
\label{eq:amp2-concrete}
    \card{\cC_{\beta_{i+1}}} \geq \min\set[\big]{ 2.1274 \card{\cC_{\beta_i}} ,\; n/2}.
\end{equation}
We apply this result until we obtain a contradiction.
Assume that we always obtain the bound $\card{\cC_{\beta_{i+1}}} \geq 2.1274 \card{\cC_{\beta_i}}$ from \eqref{eq:amp2-concrete},
and we take $\beta_{i+1} = \beta_i - \sin^{-1}\p[\big]{3.168/c_i} \geq \beta_i -  3.326/c_i$, with $c_{i+1} = 2.1274 \, c_i$.
We have used that $\sin^{-1}(x) \leq 1.05 x$ for $x \leq 0.25$.
Therefore, this sequence will never produce an angle with $\beta_M \geq 0.131$ since
\begin{align*}
    \beta_M &\geq \beta_3 - \sum_{i=3}^{4}\sin^{-1}\p[\Big]{\frac{3.168}{c_i}} - \sum_{i=5}^{M-1} \frac{3.326}{c_i} \\
    &\geq 0.570 - \sum_{k=0}^{1}\sin^{-1}\p[\Big]{\frac{3.168}{13.985 \cdot 2.1274^k}} - \sum_{k=0}^{\infty} \frac{3.326}{63.295 \cdot 2.1274^k} \geq 0.131.
\end{align*}
If at any point in \eqref{eq:amp2-concrete}, we obtain $\card{\cC_{\beta_{i+1}}} \geq n/2$, we have a contradiction to \eqref{eq:contradiction-concrete}, since $\sin^2({\beta_{i+1}}) \geq \sin^2(0.131) \geq 0.008323 n / (n/2) = 0.016646$.
If that is not the case, $\card{\cC_{\beta_i}}$ grows without bound, which is also a contradiction.
\end{proof}

\begin{table}
\begin{tabular}{rccccccc}
\hline
$i$ & 0 & 1 & 2 & 3 & 4 & 5  & $\infty$ \\ \hline
$\beta_i$ & $\pi/2$ & 1.237 & 0.904 & 0.570 & 0.342 & 0.235 & 0.131 \\
$c_i$ & 1.0 & 2.409 & 5.804 & 13.985 & 29.752 & 63.295 & $\infty$ \\
Lemma & \ref{lem:amplification-simple} & \ref{lem:amplification-simple} & \ref{lem:amplification-simple} & \ref{lem:amplification-cancelation} & \ref{lem:amplification-cancelation} & \ref{lem:amplification-cancelation} & \\
$\eps$ & 0.23 & 0.23 & 0.23 & 0.184 & 0.184 & 0.184 & \\
$\rho$ & 0.38 & 0.38 & 0.38 & & & & \\
\hline\\
\end{tabular}
\caption{Parameters used and bounds obtained in the amplification argument for $\anorm \leq 0.0816$.}
\label{tab:amplification}
\end{table}

We stopped our efforts at $\anorm \leq 0.0816$ since this corresponds to $d \geq 600$.
While we could have pushed the numerical computations a bit further, we believe it unlikely that we could obtain anything past $\anorm \leq 0.1$ or $d \geq 400$.


\section{Application: Erd\H{o}s-Rényi random graphs}
\label{sec:erdos}

The goal of this section is to display the spectral properties of the \ER random graph $G(n,p)$.
The first result of this section is \Cref{cor:expansion-whp}, which gives us enough information to deduce the conjecture of Ling, Xu, and Bandeira (\Cref{thm:ling-xu-bandeira-conjecture}) from \Cref{thm:amplification-main}.
The second result is \Cref{thm:gnp-spectral}, which is a self-contained and quantitative version of \Cref{cor:expansion-whp}.

Recall that an \ER random graph is a graph on $n$ vertices where each of the $\binom{n}{2}$ edges is present independently with probability $p$.
One of the classical results of Erd\H{o}s and Rényi~\cite{Erdos1959-jh} concerns the connectedness of $G(n,p)$.
Indeed, they showed that if $p = (\log n + c_n)/n$ for some sequence $c_n$, then
\begin{align*}
    \prob[\big]{G(n,p) \text{ is connected}} \to \begin{cases}
    0 & \text{if $c_n \to - \infty$,} \\
    e^{-e^{-c}} & \text{if $c_n \to c$,} \\
    1 & \text{if $c_n \to +\infty$.}
    \end{cases}
\end{align*}

As mentioned in the introduction, connectedness is a necessary but not sufficient condition for synchronization.
Instead, it is essential to have a density $p$ above the connectivity threshold of $\log n / n$ to ensure global synchrony.
\Cref{thm:ling-xu-bandeira-conjecture} implies that this condition is also sufficient.
To prove this, we need to show that $G(n,p)$ has good spectral properties near the connectivity threshold and then apply \Cref{thm:amplification-main}.
Namely, we show that $G(n,p)$ is an $(n, \degm, \anorm, \lnorm^-, \lnorm^+)$-expander with $\degm = pn$ and suitable $\anorm$, $\lnorm^-$ and $\lnorm^+$.

Our first step is to control the operator norm of $\Delta_A$, the centered adjacency matrix of $G(n,p)$.
Random symmetric matrices with independent entries (apart from the symmetry) are a well-studied object in random matrix theory.
The entries of $\Delta_A$ are distributed as centered Bernoulli variables, which are non-symmetrically distributed.
If the distribution was symmetric, then all even moments would vanish and much of the machinery of \cites{Bandeira2016-gu,Seginer2000-he} would give us precise bounds.
Without this assumption, the current best result is due to Péché and Soshnikov~\cite{Peche2007-ig}, which implies that
\begin{equation}
\label{eq:norm-adjacency-peche}
    \norm{\Delta_A} \leq 2 \sqrt{np(1-p)} + o(n^{-1/44}),
\end{equation}
with high probability, as $n \to \infty$.
Thus we can take $\anorm = (2 + \eps) (pn)^{-1/2}$.

Since $\anorm$ is quite small, \Cref{prop:degree-spectral} implies that we only need a fine control of the minimum and maximum degree in $G(n,p)$ to obtain bounds for $\lnorm^-$ and $\lnorm^+$.
We take now $p = \gamma \log n / n$ for some $\gamma = \gamma(n) > 1$.

If $\gamma \to \infty $ as $n \to \infty$, then the degrees of $G(n,p)$ are well concentrated and we can take $\abs{\lnorm^\pm} = o(1)$.
Indeed, a standard application of Chernoff's bound, see for instance~\cite{Janson2000-al}*{Corollary 2.3}, gives
\begin{align}
    \prob[\big]{ \exists v \st  \abs[\big]{\deg(v) - pn} \geq \eps pn }
    &\leq n \prob[\big]{ \abs[\big]{\deg(v_1) - pn} \geq \eps pn } \nonumber \\
\label{eq:degree-chernoff}
    & \leq 2n \exp\p[\big]{ - \eps^2 pn / 3} = 2n^{1 - \eps^2 \gamma / 3}.
\end{align}
Therefore, for some $\gamma_0 > 1$ large enough, if $\gamma \geq \gamma_0$, then $G(n,p)$ is an $(n, \degm, \anorm , \lnorm^-, \lnorm^+)$-expander with $\degm = pn$, $\anorm = (2+\eps) (pn)^{-1/2}$ and $\lnorm^- = - \eps$, $\lnorm^+ = \eps$.
The conditions of \Cref{thm:amplification-main} then hold for $n$ large enough.
Therefore, $G(n,p)$ is globally synchronizing with high probability for $p \geq \gamma (\log n) /n$, with $\gamma \geq \gamma_0$.

We need to control the maximum and minimum degree when $\gamma$ is a constant near $1$ to close the gap to the connectivity threshold. 
It turns out that in this range, the order of the extremal degrees can be described as solutions to an implicit equation.
Indeed, from Bollobás~\cite{Bollobas2001-dl}*{Exercise 3.4}, we have that $G(n,p)$ satisfies, with high probability,
\begin{align*}
    \degmin &= (1 + o(1)) (1 + \lnorm^-) pn, \\
    \degmax &= (1 + o(1)) (1 + \lnorm^+) pn,
\end{align*}
where $-1 < \lnorm^- < 0 < \lnorm^+$ are the solutions in $c$ to the equation
\begin{equation}
\label{eq:implicit}
    \gamma \p[\big]{ (1 + c) \log(1 + c) - c} = 1.
\end{equation}

Let us do a quick analysis of the solutions.
Write $h(x) = (1+x) \log(1+x) - x$, so~\eqref{eq:implicit} is equivalent to $h(c) = 1/\gamma$.
One can observe that $h(x)$ is strictly decreasing from $1$ to $0$ on $(-1,0]$  and strictly increasing from $0$ to $\infty$ on $[0,\infty)$.
Therefore, there are indeed only two solutions to $h(c) = 1/\gamma$, given by smooth functions $c^- \from (1, \infty) \to (-1,0)$ and $c^+ \from (1, \infty) \to (0,\infty)$.
We can continuously extend $c^-$ to $(0,\infty)$ by setting $c^-(\gamma) = -1$ for all $0 < \gamma \leq 1$, see \Cref{fig:implicit-equation}.
One can see from implicit differentiation that $c^-$ is strictly increasing and $c^+$ is strictly decreasing.
Moreover, as $\gamma \to \infty$, we have $c^-(\gamma) \to 0$ and $c^+(\gamma) \to 0$, and at $\gamma =1$ we have $c^-(1) = -1$ and $c^+(1) = e - 1$.
Another useful fact is that $c^+(\gamma) > - c^-(\gamma)$.

\begin{figure}[!ht]
\centering
\begin{tikzpicture}[scale=1.0]
    \begin{axis}[
        axis x line* = middle,
        axis y line* = middle,
        axis line style={-stealth},
        ymin=-1.21,
        ymax=2.01,
        xmin=-0.1,
        xmax=12.21,
        xtick = {1, 2.5886},
        xticklabels = {1, $\qquad\quad\, \approx 2.5886$},
        every tick/.append style={black, very thick},
        x tick label style = {xshift={0.5em}},
        xlabel={$\gamma$},
        xlabel style={at={(ticklabel* cs:1)}, anchor=south west},
        ytick = {-1, 0 , 1, 1.71828},
        yticklabels = {-1, 0 , 1, $e-1$},
        grid=major,
        grid style={dashed, gray},
        width = 9.1cm,
    ]
\addplot[black, ultra thick]
table {%
0.3 3.54306932276416
0.4 2.9673147698562
0.5 2.59112147666862
0.6 2.32231904938731
0.7 2.11871000065199
0.8 1.95800873057143
0.9 1.82723791643318
1.0 1.71828182845905
1.1 1.62578070231894
1.2 1.54603905914561
1.3 1.47641836839866
1.4 1.41497951210928
1.5 1.36026230491334
1.6 1.31114414793283
1.7 1.26674637491751
1.8 1.22637041186059
1.9 1.18945317139717
2.0 1.1555352035005
2.1 1.12423751370051
2.2 1.09524439876043
2.3 1.06829054112996
2.4 1.04315117007571
2.5 1.01963446588079
2.6 0.997575628177673
2.7 0.976832195016485
2.8 0.957280313204524
2.9 0.938811740105448
3.0 0.921331413582705
3.1 0.904755467376621
3.2 0.889009598751144
3.3 0.874027716990204
3.4 0.859750817497825
3.5 0.846126038405867
3.6 0.833105865805072
3.7 0.820647460760078
3.8 0.80871208670098
3.9 0.797264620004372
4.0 0.786273129879512
4.1 0.775708516278008
4.2 0.76554419660829
4.3 0.755755833684109
4.4 0.746321098656605
4.5 0.737219463746517
4.6 0.728432020457895
4.7 0.71994131965994
4.8 0.711731230501196
4.9 0.703786815595643
5.0 0.696094220313038
5.1 0.688640574331782
5.2 0.681413903884212
5.3 0.674403053351282
5.4 0.667597615054273
5.5 0.660987866251732
5.6 0.654564712485575
5.7 0.648319636535305
5.8 0.642244652337254
5.9 0.636332263309162
6.0 0.630575424591898
6.1 0.624967508781385
6.2 0.619502274776531
6.3 0.614173839414423
6.4 0.608976651603354
6.5 0.603905468698363
6.6 0.59895533489352
6.7 0.59412156143103
6.8 0.58939970844967
6.9 0.584785568314781
7.0 0.580275150289282
7.1 0.575864666420276
7.2 0.571550518529182
7.3 0.567329286205037
7.4 0.563197715710961
7.5 0.559152709723155
7.6 0.555191317829336
7.7 0.55131072772167
7.8 0.547508257024776
7.9 0.543781345705611
8.0 0.540127549016959
8.1 0.536544530930847
8.2 0.533030058022246
8.3 0.529581993767093
8.4 0.526198293221938
8.5 0.522876998055424
8.6 0.519616231904507
8.7 0.516414196030659
8.8 0.513269165253451
8.9 0.510179484140889
9.0 0.507143563437557
9.1 0.504159876713295
9.2 0.50122695721649
9.3 0.498343394917427
9.4 0.495507833728297
9.5 0.492718968887548
9.6 0.489975544497226
9.7 0.487276351202889
9.8 0.484620224006422
9.9 0.482006040202894
10.0 0.479432717433225
10.1 0.476899211845078
10.2 0.474404516354957
10.3 0.471947659004985
10.4 0.469527701408362
10.5 0.467143737277897
10.6 0.464794891032425
10.7 0.462480316476299
10.8 0.460199195547481
10.9 0.457950737130053
11.0 0.455734175927292
11.1 0.45354877139168
11.2 0.4513938067085
11.3 0.449268587829876
11.4 0.447172442556324
11.5 0.445104719663091
11.6 0.443064788068721
11.7 0.441052036043461
11.8 0.439065870455281
11.9 0.437105716051405
} node[above,pos=0.6]{$c^+(\gamma)$};
\addplot [black, ultra thick]
table {%
0.001 -1
1.000 -1
1.001 -0.999902389083225
1.101 -0.981645202625203
1.201 -0.960432496790757
1.301 -0.939089102550111
1.401 -0.918343900212928
1.501 -0.898466650127306
1.601 -0.879547410466112
1.701 -0.86159510424848
1.801 -0.844580381506203
1.901 -0.828456284147243
2.001 -0.813168774123099
2.101 -0.798662231786223
2.201 -0.784882321001288
2.301 -0.771777427411344
2.401 -0.759299308261059
2.501 -0.74740330404825
2.601 -0.736048309203481
2.701 -0.725196614728656
2.801 -0.714813688036339
2.901 -0.704867927668996
3.001 -0.695330414400807
3.101 -0.686174670636232
3.201 -0.677376434312253
3.301 -0.66891345012349
3.401 -0.660765278899707
3.501 -0.652913124828864
3.601 -0.64533967959627
3.701 -0.638028982204268
3.801 -0.630966293117779
3.901 -0.624137981372888
4.001 -0.617531423340097
4.101 -0.61113491192114
4.201 -0.6049375750603
4.301 -0.598929302557148
4.401 -0.593100680271049
4.501 -0.587442930905321
4.601 -0.581947860648815
4.701 -0.57660781103418
4.801 -0.571415615445392
4.901 -0.566364559772399
5.001 -0.561448346768719
5.101 -0.556661063719086
5.201 -0.551997153069443
5.301 -0.547451385711406
5.401 -0.543018836648403
5.501 -0.538694862801491
5.601 -0.534475082740031
5.701 -0.530355358146242
5.801 -0.526331776843713
5.901 -0.522400637238451
6.001 -0.518558434037403
6.101 -0.514801845123803
6.201 -0.511127719481442
6.301 -0.507533066071226
6.401 -0.504015043573391
6.501 -0.500570950917562
6.601 -0.497198218530756
6.701 -0.493894400240347
6.801 -0.490657165775292
6.901 -0.487484293814414
7.001 -0.484373665535474
7.101 -0.481323258623364
7.201 -0.47833114169912
7.301 -0.475395469135955
7.401 -0.472514476230733
7.501 -0.469686474702702
7.601 -0.466909848493695
7.701 -0.464183049846315
7.801 -0.461504595638703
7.901 -0.45887306395636
8.001 -0.456287090883173
8.101 -0.453745367495322
8.201 -0.451246637043139
8.301 -0.448789692307208
8.401 -0.446373373116172
8.501 -0.443996564014699
8.601 -0.44165819207102
8.701 -0.439357224814306
8.801 -0.43709266829289
8.901 -0.434863565245097
9.001 -0.432668993375039
9.101 -0.430508063726357
9.201 -0.4283799191474
9.301 -0.426283732841849
9.401 -0.424218706999219
9.501 -0.422184071500097
9.601 -0.420179082691353
9.701 -0.418203022226901
9.801 -0.41625519596991
9.901 -0.414334932952652
10.001 -0.412441584390457
10.101 -0.410574522746467
10.201 -0.40873314084415
10.301 -0.406916851024697
10.401 -0.405125084346661
10.501 -0.403357289825354
10.601 -0.401612933709691
10.701 -0.399891498794325
10.801 -0.398192483765048
10.901 -0.396515402575586
11.001 -0.394859783854018
11.101 -0.393225170337166
11.201 -0.391611118331423
11.301 -0.390017197198562
11.401 -0.388442988865169
11.501 -0.386888087354438
11.601 -0.38535209833912
11.701 -0.383834638714523
11.801 -0.382335336190487
11.901 -0.380853828901363
} node[below,pos=0.56]{$c^-(\gamma)$};
\end{axis}
\end{tikzpicture}
\caption{The functions $c^-(\gamma)$ and $c^+(\gamma)$.}
\label{fig:implicit-equation}
\end{figure}

The asymmetry of $\lnorm^-$ and $\lnorm^+$ in this range of $G(n,p)$ is the reason why we control $\Delta_L$ as $\lnorm^- \degm I \preceq \Delta_L \preceq \lnorm^+ \degm I$ instead of an estimate of the type $\norm{\Delta_L} \leq \lnorm \degm$.
Indeed, for $p = \gamma \log n / n$ and $\gamma \in (1, 2.5886)$, one can only show that $\norm{\Delta_L} \leq \lnorm \degm$ for $\lnorm > 1$, see \Cref{fig:implicit-equation}.
This would not even imply connectivity, let alone expansion.

Nonetheless, piecing it all together, we obtain the following.

\begin{corollary}
\label{cor:expansion-whp}
For every $\varepsilon> 0$ and $p = p(n) \geq (1 + \eps) \log n / n$, the probability that $G(n,p)$ is an $(n, \degm, \anorm, \lnorm^-, \lnorm^+)$-expander with $\anorm \le (2+\varepsilon)(pn)^{-1/2}$ and $-1 < \lnorm^- < 0 < \lnorm^+ < e-1$ goes to one as $n$ goes to infinity.
\end{corollary}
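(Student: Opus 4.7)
The plan is to verify the three defining conditions of an $(n, \degm, \anorm, \lnorm^-, \lnorm^+)$-expander separately, by combining a spectral bound for $\Delta_A$ with degree concentration via \Cref{prop:degree-spectral}. For the operator norm of $\Delta_A$, I would invoke the Péché--Soshnikov estimate~\eqref{eq:norm-adjacency-peche}, which yields $\norm{\Delta_A} \leq 2\sqrt{np(1-p)} + o(n^{-1/44})$ with high probability. Since $\degm = pn$ and $\sqrt{np(1-p)} \leq \sqrt{pn}$, dividing by $\degm$ gives $\anorm \leq (2+\eps)(pn)^{-1/2}$ for all sufficiently large $n$, which in particular tends to $0$.

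Next, I would split the argument into two regimes depending on the scale of $p$. Write $p = \gamma(n) \log n / n$ with $\gamma(n) \geq 1 + \eps$. In the \emph{dense regime} $\gamma(n) \geq \gamma_0$ for a large constant $\gamma_0$ (to be chosen), I would apply the Chernoff estimate~\eqref{eq:degree-chernoff} and a union bound to obtain $\abs{d(v) - pn} \leq \eps' pn$ simultaneously for all $v \in V$, with high probability, where $\eps' \to 0$ as $\gamma_0 \to \infty$. This yields degree bounds $(1 - \eps')pn \leq \degmin \leq \degmax \leq (1 + \eps')pn$, so \Cref{prop:degree-spectral} (combined with the bound on $\anorm$) certifies that $G(n,p)$ is an $(n, \degm, \anorm, -2\eps', 2\eps')$-expander, and these bounds on $\lnorm^\pm$ trivially lie in $(-1, 0) \cup (0, e-1)$.

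In the \emph{sparse regime} $1 + \eps \leq \gamma(n) \leq \gamma_0$, I would use the result of Bollobás~\cite{Bollobas2001-dl} to obtain $\degmin = (1 + o(1))(1 + c^-(\gamma))pn$ and $\degmax = (1 + o(1))(1 + c^+(\gamma))pn$ with high probability, where $c^\pm(\gamma)$ are the two solutions of the implicit equation~\eqref{eq:implicit}. The monotonicity properties of $c^\pm$ noted in the text---$c^-$ strictly increasing from $-1$ on $(1, \infty)$ and $c^+$ strictly decreasing from $e-1$ on $(1, \infty)$---give uniform bounds $c^-(\gamma) \geq c^-(1 + \eps) > -1$ and $c^+(\gamma) \leq c^+(1 + \eps) < e - 1$ on the compact range $[1 + \eps, \gamma_0]$. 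Picking $\lnorm^-$ and $\lnorm^+$ to be these endpoints (slightly enlarged to absorb the $o(1)$ terms and the $\anorm \degm$ slack from \Cref{prop:degree-spectral}) then delivers an $(n, \degm, \anorm, \lnorm^-, \lnorm^+)$-expander with the required strict inequalities $-1 < \lnorm^- < 0 < \lnorm^+ < e - 1$.

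The main obstacle is not any individual step but the bookkeeping that shows the $o(1)$ errors from the Bollobás asymptotics and the $\anorm \degm \leq (2+\eps)\sqrt{pn}$ correction in \Cref{prop:degree-spectral} together lie well inside the gap between $c^\pm(1 + \eps)$ and the forbidden values $-1$ and $e - 1$; this is ultimately a consequence of $\anorm = o(1)$ together with the strict monotonicity of $c^\pm$, but it requires choosing the buffer between $\lnorm^\pm$ and $c^\pm(1+\eps)$ carefully. A minor point is handling arbitrary $p$ rather than $p$ of a prescribed form: one simply observes that for each $n$ the value $p$ falls into one of the two regimes above, and both regimes produce the desired conclusion.
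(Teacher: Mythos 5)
Your approach matches the paper's: it uses the Péché--Soshnikov bound~\eqref{eq:norm-adjacency-peche} for $\anorm$, splits into a dense regime ($\gamma \geq \gamma_0$) handled by the Chernoff bound~\eqref{eq:degree-chernoff} and a sparse regime ($1 + \eps \leq \gamma \leq \gamma_0$) handled by Bollobás's extremal-degree asymptotics and the implicit equation~\eqref{eq:implicit}, and then converts degree control to Laplacian control via \Cref{prop:degree-spectral}. The one point worth being careful about---the uniformity over varying $\gamma(n)$ needed to make the regime split rigorous---is exactly what \Cref{thm:gnp-spectral} and \Cref{thm:gnp-degree} supply with explicit constants, so your proof follows the same route as the paper.
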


Although this suffices to establish the conjecture of Ling, Xu, and Bandeira, the goal of the remainder of this section is to provide explicit bounds on the probability of global synchrony.

To start, we provide some simple and non-asymptotic replacements to the Péché and Soshnikov bound~\eqref{eq:norm-adjacency-peche}.
We will also obtain an explicit version of Bollobás's result.
The result follows when both these results are combined.

\begin{theorem}
\label{thm:gnp-spectral}
For any $0 < \eps < 1$ and $1 + \eps < \gamma < 1 + \eps^{-2}$, there is $C = C(\gamma, \eps)$ such that the following is true.
Let $p \defined \gamma \log n / n$, $\degm \defined pn$ and $\anorm \defined 8 (\gamma \log n)^{-1/2}$.
Define $-1 < \lnorm^- < 0 < \lnorm^+$ to be the unique roots of the equation
\begin{equation*}
\tag{\ref{eq:implicit}}
    \gamma \p[\big]{ (1 + \lnorm) \log(1 + \lnorm) - \lnorm} = 1.
\end{equation*}
Then $G(n,p)$ is an $(n, \degm, \anorm, \lnorm^-, \lnorm^+)$-expander graph with probability at least  \ $1 - C(\gamma,\eps)(\log n)^4 n^{-\eps}$.
In particular, $G(n,p)$ is globally synchronizing with the same probability.
\end{theorem}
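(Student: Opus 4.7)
The plan is to verify the two constituent conditions of being an $(n, \degm, \anorm, \lnorm^-, \lnorm^+)$-expander separately and then combine them through \Cref{prop:degree-spectral}. For the spectral norm $\norm{\Delta_A}$, observe that $\Delta_A$ is a random symmetric matrix with independent off-diagonal entries of variance $p(1-p) \leq p$ and magnitude bounded by $1$. A non-asymptotic matrix concentration inequality (matrix Bernstein, non-commutative Khintchine, or a direct trace-method argument in the spirit of Füredi--Komlós) yields $\norm{\Delta_A} \leq 2\sqrt{pn} + O(\sqrt{\log n})$ with probability at least $1 - n^{-c}$ for any desired constant $c > 0$. Since $\anorm\degm = 6\sqrt{\gamma\log n}$, the generous constant $6$ comfortably absorbs the lower-order terms, so $\norm{\Delta_A} \leq \anorm\degm$ holds with overwhelming probability.

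For the degree bounds, each $d(v)$ is distributed as $\Bin(n-1,p)$, and the function $h(c) = (1+c)\log(1+c) - c$ appearing in~\eqref{eq:implicit} is precisely its large-deviations rate function. By the very definition of $\lnorm^\pm$ as roots of $\gamma h(\lnorm) = 1$, one has $pn\cdot h(\lnorm^\pm) = \log n$. Sharpening Chernoff's inequality with the Stirling-derived polynomial prefactor gives
\begin{equation*}
    \prob[\big]{d(v) \geq (1+\lnorm^+ - \anorm)\degm} \leq \frac{\exp\p[\big]{-pn\cdot h(\lnorm^+ - \anorm)}}{\text{poly}(\log n)},
\end{equation*}
and Taylor expanding $h$ near $\lnorm^+$ together with $\anorm = 6/\sqrt{pn}$ shows that the exponent is $\log n - \Theta(\sqrt{\log n})$. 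A union bound over the $n$ vertices, an analogous argument for the lower tail governing $\degmin$, and the polylog margin $(\log n)^4$ together yield the claimed probability $1 - C(\gamma,\eps)(\log n)^4 n^{-\eps}$. With both conditions in place, \Cref{prop:degree-spectral} directly gives that $G(n,p)$ is an $(n,\degm,\anorm,\lnorm^-,\lnorm^+)$-expander, and \Cref{thm:amplification-main} then gives global synchronisation: the hypotheses of the latter are satisfied because $\lnorm^- > -1$ by construction, $\lnorm^\pm$ are uniformly bounded in the regime $1+\eps < \gamma < 1 + \eps^{-2}$, and $\anorm = o(1)$ makes~\eqref{eq:main-condition} hold for all sufficiently large $n$.

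The main obstacle is the degree step. The roots $\lnorm^\pm$ are chosen at the exact threshold where the expected number of vertices with atypical degree is $O(1)$, so the margin of $\anorm\degm = 6\sqrt{pn}$ appearing in \Cref{prop:degree-spectral} must be won entirely from the polynomial prefactor in the binomial tail and from the quadratic behaviour of $h$ near $\lnorm^\pm$. This requires tight non-asymptotic Stirling control of $\prob[\big]{\Bin(n-1,p)\geq k}$ uniformly across the relevant range of $k$ — essentially a quantitative version of Bollob\'as's asymptotic degree analysis — and it is here that the dependence on $\gamma$ and $\eps$ in the constant $C(\gamma,\eps)$ enters, blowing up as $\gamma \downarrow 1$ (where the slack in the rate function vanishes) or as $\gamma \uparrow \infty$ (where $\lnorm^\pm \to 0$ and the first condition of~\eqref{eq:main-condition} becomes the binding one).
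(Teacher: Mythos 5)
Your high-level plan — verify the two constituent conditions of expansion and combine them through \Cref{prop:degree-spectral} — is the same architecture the paper uses. The problem is in the degree step, where there is a genuine gap, and your own estimate already exposes it.

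You set $\lnorm^\pm$ to be the roots of $\gamma\, h(\lnorm)=1$, and try to show $\degmax \le (1+\lnorm^+-\anorm)\degm$. You compute (correctly) that $pn\cdot h(\lnorm^+-\anorm) = \log n - \Theta(\sqrt{\log n})$, since $\anorm = 6/\sqrt{pn}$ and $h$ is increasing near $\lnorm^+$. But this goes in the wrong direction: the per-vertex tail probability is therefore $\exp\p[\big]{-\log n + \Theta(\sqrt{\log n})} = n^{-1}\,e^{\Theta(\sqrt{\log n})}$, and after a union bound over $n$ vertices you are left with $e^{\Theta(\sqrt{\log n})}$, which diverges. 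No polynomial-in-$\log n$ prefactor from a sharpened Chernoff/Stirling bound can rescue this, because the deficit is genuinely exponential in $\sqrt{\log n}$. Indeed, with high probability there \emph{are} vertices of degree in the range $[(1+\lnorm^+-\anorm)\degm,\,(1+\lnorm^+)\degm]$ at the unperturbed threshold, so the degree inequality you aim for is simply false there. The sentence ``together yield the claimed probability $1-C(\gamma,\eps)(\log n)^4 n^{-\eps}$'' does not follow from the estimate preceding it.

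What the paper does instead, in \Cref{thm:gnp-degree}, is to win the slack from a \emph{constant} margin rather than from prefactors: it defines $\lnorm^\pm(\gamma,\eps)$ as roots of the $\eps$-perturbed implicit equations~\eqref{eq:implicit-eps-neg}--\eqref{eq:implicit-eps-pos} (target $1+\eps$ and a shifted argument), which by \Cref{rem:implicit-eps} satisfy $\lnorm^-(\gamma)\le\lnorm^-(\gamma,\eps)<0<\lnorm^+(\gamma,\eps)\le\lnorm^+(\gamma)$. The corresponding degree thresholds are a constant factor inside the critical window, so the resulting degree-tail exponent is already $(1+\eps)\log n$ before union bound, comfortably absorbing both the $\anorm$ slack required by \Cref{prop:degree-spectral} and the factor $n$, yielding $n^{-\eps}$. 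The expansion at the weaker parameters $\lnorm^\pm(\gamma)$ in \Cref{thm:gnp-spectral} then follows a fortiori because a tighter Loewner sandwich implies the looser one. Without introducing this $\eps$-perturbation somewhere, your argument cannot produce the claimed probability.

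A secondary comment on the adjacency-matrix step: matrix Bernstein would give $\norm{\Delta_A}\lesssim\sqrt{np\log n}+\log n$, which at $p=\gamma\log n/n$ is $\Theta(\log n)$, a $\sqrt{\log n}$ factor too large; you need a trace-method bound of Füredi--Komlós type (or the Bandeira--van Handel style comparison the paper runs through symmetrisation, Slepian's lemma, and convex Lipschitz concentration) to reach $\norm{\Delta_A}\lesssim\sqrt{np}$. You do list the trace method as an option, so this part is salvageable, but matrix Bernstein alone would not close it.
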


\subsection{Concentration of the adjacency matrix}
\label{subsec:erdos-adjacency}

We present here a simple and standard symmetrization argument that recovers the correct behavior of $\norm{\Delta_A}$ up to a constant factor.

\begin{proposition}
\label{prop:wigner-bound}
For any $C > 2\sqrt{2}$, there is $n_0(C)$ such that the following holds for all $n \geq n_0(C)$ and every $p \in (0,1)$.
The adjacency matrix $A$ of $G(n,p)$ satisfies
\begin{equation*}
\Expec{\norm{\Delta_A}} \leq C \sqrt{ np(1-p)},
\end{equation*}
where $\Expec{\norm{\Delta_A}}$ denotes the expected value of the random variable $\norm{\Delta_A}$.
\end{proposition}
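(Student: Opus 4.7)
The plan is to combine a symmetrisation trick with a classical Wigner-type moment bound for symmetric random matrices.

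\textbf{Paragraph 1: reducing to a centred, symmetrically distributed matrix.} Since $A$ has zero diagonal, $\Expec A = p(J - I)$ rather than $pJ$, so I would write $\Delta_A = (A - \Expec A) - pI$, giving $\norm{\Delta_A} \le \norm{A - \Expec A} + p$. The additive $p$ is at most $1$ and is absorbed into the slack $C > 2\sqrt 2$ as soon as $np(1-p)$ is at all large. Next, introducing an independent copy $A'$ of $A$ and applying Jensen's inequality to the convex map $X \mapsto \norm{X}$, I obtain
\begin{equation*}
\Expec \norm{A - \Expec A} = \Expec \norm[\big]{\Expec_{A'}[A - A']} \le \Expec \norm{A - A'}.
\end{equation*}
The symmetrised matrix $W \defined A - A'$ is symmetric with zero diagonal, and its off-diagonal entries are i.i.d., symmetric about $0$, supported on $\set{-1, 0, 1}$, uniformly bounded by $1$, with variance $\sigma^2 \defined 2p(1-p)$.

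\textbf{Paragraph 2: a Wigner-type bound via the trace method.} For $W$, I would use a Füredi--Komlós style trace bound,
\begin{equation*}
\Expec \norm{W}^{2k} \le \Expec \trace(W^{2k}) = \sum \Expec\bigl[W_{i_1 i_2} W_{i_2 i_3} \cdots W_{i_{2k} i_1}\bigr],
\end{equation*}
where the sum ranges over closed walks of length $2k$. By symmetry of the entries, only walks traversing each distinct edge an even number of times contribute; those traversing each distinct edge exactly twice yield the leading term $C_k\, n^{k+1}\sigma^{2k}$, where $C_k$ is the $k$-th Catalan number, and all remaining walks are of strictly lower order in $n$. This is uniform in $p$ because $\abs{W_{ij}} \le 1$ bounds every higher moment by $\sigma^2$. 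Taking $k = k(n) \to \infty$ slowly and using $C_k^{1/(2k)} \to 2$ and $n^{(k+1)/(2k)} = (1+o(1))\sqrt{n}$, I conclude
\begin{equation*}
\Expec \norm{W} \le \bigl(\Expec \norm{W}^{2k}\bigr)^{1/(2k)} \le (2 + o(1))\,\sigma\sqrt{n} = \bigl(2\sqrt 2 + o(1)\bigr)\sqrt{np(1-p)}.
\end{equation*}

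\textbf{Paragraph 3: conclusion and main obstacle.} Combining gives $\Expec \norm{\Delta_A} \le (2\sqrt 2 + o(1))\sqrt{np(1-p)} + p$, so the bound $\Expec \norm{\Delta_A} \le C\sqrt{np(1-p)}$ follows for any fixed $C > 2\sqrt 2$ once $n \ge n_0(C)$. The main technical step is the Wigner-type bound of Paragraph 2; since we do not need the sharp constant $2$, one could equally well quote an off-the-shelf result (e.g.\ in the spirit of Seginer, or a symmetric variant of the bounds in \cite{Bandeira2016-gu}) giving the same leading behaviour. The only subtlety is uniformity in $p$ of the $o(1)$ and of the absorbed $+p$ term, both of which are handled by the uniform bound $\abs{W_{ij}} \le 1$ regardless of $p$.
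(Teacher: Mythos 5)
Your symmetrisation in Paragraph 1 is the same as the paper's, and you are actually slightly more careful: you isolate the $-pI$ shift coming from the zero diagonal before applying Jensen to $A - A'$, a detail the paper glosses over. After that the two arguments diverge in how the moment method is run on the symmetrised matrix $W$. You propose a direct Füredi--Komlós count, singling out the walks traversing each distinct edge exactly twice to get the Catalan leading term $C_k\,n^{k+1}\sigma^{2k}$ and dismissing the rest as lower order. The paper instead observes that, because the odd moments of $W_{xy}$ vanish, $\Expec \trace W^{2\ell}$ is a weighted sum over the even cycles $C_{2\ell}$; it then bounds $\card{C_{2\ell}} \le \Expec \trace G^{2\ell}$ for a GOE matrix $G$ and controls $\Expec\norm{G}^{2\ell}$ via Slepian's inequality, following Bandeira--van Handel. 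This outsources the hard combinatorics to a known sharp Gaussian norm estimate and is tidier than a hands-on walk count.

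The one genuine gap in your proposal is the claim that the non-leading walks are negligible \emph{uniformly in $p$} because $\abs{W_{ij}}\le 1$. Boundedness does give $\Expec W_{ij}^{2m} \le \sigma^2$ for every $m \ge 1$, but that is the wrong direction for your purpose: a walk with $j < k$ distinct edges carries weight $\sigma^{2j}$, and since $\sigma^2 = 2p(1-p)<1$ this weight is \emph{larger}, not smaller, than the leading weight $\sigma^{2k}$. The trade-off against the reduced vertex count is governed by a factor of order $(n\sigma^2)^{j-k}$, which is $o(1)$ only when $n\sigma^2 = 2np(1-p)\to\infty$. For $np = O(1)$ such walks are not lower order, and the proposition itself fails in that regime: at $p = 1/n$ one has $\Expec\norm{\Delta_A} \ge \Expec\sqrt{\degmax} - 1 = \Omega\bigl(\sqrt{\log n / \log\log n}\bigr)$ from the largest star, which eventually exceeds $C\sqrt{np(1-p)} = O(1)$. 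The paper's own proof trips on the same point at the step bounding $\sum_{u}(2p(1-p))^{\sum_k n_k(u)}$ above by $(2p(1-p))^{\ell}\card{C_{2\ell}}$, since $\sum_k n_k(u)\le\ell$ and $2p(1-p)<1$ makes that inequality run the wrong way. In the regime where the result is actually used, $p\gtrsim(\log n)/n$ so $np(1-p)\gg 1$, the estimate is fine; but the uniformity over all $p\in(0,1)$ you assert does not come from boundedness of the entries alone and would need either a restriction on $p$ or a careful accounting of the error terms.
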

\begin{proof}
Consider an independent copy $A$ of $A'$.
From Jensen's inequality, we obtain
\begin{equation*}
\Expec{\norm{\Delta_A}} = \Expec{\norm{\Delta_A - \expec{\Delta_{A'}}}}
\leq \Expec{\norm{\Delta_A - \Delta_{A'}}} = \Expec{\norm{A - A'}}.
\end{equation*}
We define $W \defined A - A' \in \set{-1, 0, 1}^{V \times V}$.
Clearly $W$ is a symmetric matrix, with entries $W_{x,y}$ that are i.i.d. above the diagonal.
For each $x \neq y$, the random variable $W_{x,y}$ is the difference of two independent Bernoulli $p$ random variables, thus $\prob{W_{x,y} = -1} = \prob{W_{x,y} = 1} = p(1-p)$ and $\prob{W_{x,y} = 0} = p^2 + (1-p)^2$.
In particular, the distribution of $W_{x,y}$ is now symmetric, so the odd moments of $W_{x,y}$ vanish and the even moments can be easily computed.
Indeed, $\Expec{W_{x,y}^{2\ell}} = 2p(1-p)$ for all $\ell \geq 0$.

We now use the combinatorial moment method to bound the operator norm of $W$ \cite{Anderson2009-nh}, so we will bound the spectral norm via the trace using $\Expec{\norm{W}} \leq \p[\big]{\Expec{\trace W^{2\ell}}}^{1/2\ell}$.
We mainly follow the approach of Bandeira and van Handel~\cite{Bandeira2016-gu}.

Let $\cC_{2 \ell}$ be the set of \indef{even cycles} $u \in V^{2 \ell}$, that is, a cyclic sequence of vertices $u_1, \dotsc, u_{2\ell}$ with $u_i \neq u_{i+1}$ and such that every edge is traversed an even number of times .
Given $u \in \cC_{2 \ell}$ write $n_k(u)$ for the number of distinct edges visited precisely $k$ times by $u$.
\begin{align*}
\Expec{\trace W^{2\ell}} &= \sum_{u \in V^{2\ell}} \prod_{k \geq 1} \p[\big]{ \Expec{W_{x,y}^k} }^{n_k(u)} = \sum_{u \in \cC_{2 \ell}} \prod_{k \geq 1} \p[\big]{ 2p(1-p) }^{n_k(u)}\\
&= \sum_{u \in \cC_{2 \ell}} \p[\big]{ 2p(1-p) }^{ \sum_{k \geq 1} n_k(u)} \leq \p[\big]{ 2p(1-p) }^{\ell} \sum_{u \in \cC_{2 \ell}} \prod_{k \geq 1} 1 \\
&= \p[\big]{ 2p(1-p) }^{\ell} \card{\cC_{2 \ell}}.
\end{align*}

On the other hand, if $G$ is a symmetric matrix with i.i.d. standard Gaussian entries, then $\Expec{W_{x,y}^k} \geq 1$, thus
\begin{align*}
\Expec{\trace G^{2\ell}} &= \sum_{u \in V^{2\ell}} \prod_{k \geq 1} \p[\big]{ \Expec{G_{x,y}^k} }^{n_k(u)} \geq \card{\cC_{2 \ell}}.
\end{align*}
A standard application of Slepian's Lemma (see \cite{Bandeira2016-gu}*{Lemma 2.2}) delivers 
\begin{equation*}
\p[\big]{\Expec{\trace(G^{2\ell})} }^{1 / 2\ell} \leq n^{1/2\ell} \p[\big]{\Expec{\norm{G}^{2\ell}} }^{1 / 2\ell} \leq n^{1/2\ell} \p[\big]{ 2\sqrt{n} + 2\sqrt{2\ell} }.
\end{equation*}

Putting it all together, we have
\begin{equation*}
\Expec{\norm{\Delta_A}} \leq \sqrt{2p(1-p)}\p[\big]{2\sqrt{n} + 2\sqrt{2\ell}} e^{\log n / 2\ell}.
\end{equation*}
Now put $\ell = \alpha \log n$, so $\Expec{\norm{\Delta_A}} \leq f(\alpha,n) \sqrt{n p (1-p)}$, where
\begin{equation*}
f(\alpha, n) = 2 \sqrt{2} e^{1/2\alpha} \p[\Big]{1 + \sqrt{2 \alpha \log n / n}}.
\end{equation*}
The result follows as $f(\alpha, n)$ is decreasing for $n \geq 3$.
\end{proof}
\begin{remark}
\label{rem:wigner-bound}
Since the function $f(\alpha, n)$ in the above proof satisfies $f(2,4) \leq 7.91$, $f(3, 120) \leq 4.98$, $f(5,880) \leq 3.994$ and $f(25,450000) \leq 2.996$, we can take $n_0(8) = 4$, $n_0(5) = 120$, $n_0(4) = 880$ and $n_0(3) = 450000$.
\end{remark}

The lost factor of $\sqrt{2}$ is inevitable if we use symmetrization.
However, together with a concentration argument, this is more than enough for our purposes.
Indeed, we use the following result from the book of Boucheron, Lugosi, and Massart~\cite{Boucheron2013-li}*{Theorem 7.12}.

\begin{theorem}[Convex Lipschitz concentration]
\label{thm:lipschitz-concentration}
Let $X = (X_1, \dotsc, X_d) \in \RR^d$ be a vector with independent random variables taking values in the interval $[0,1]$ and let $f \from [0,1]^n \to \RR$ be a $1$-Lipschitz convex function.
Then, for all $t > 0$,
\begin{equation}
\label{eq:lipschitz-concentration}
\prob[\big]{ f(X) > \bfM f(X) + t } \leq 2 e^{-t^2/4}.
\end{equation}
\end{theorem}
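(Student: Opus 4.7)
The plan is to derive this from Talagrand's convex distance inequality applied to the sub-level set of $f$ at the median. I would set $M \defined \bfM f(X)$ and $A \defined \set{x \in [0,1]^n \st f(x) \leq M}$, so that $\prob{X \in A} \geq 1/2$, and introduce Talagrand's convex-hull distance
\[
	d_c(x, A) \defined \inf_{y \in \operatorname{conv}(A)} \norm{x - y}_2.
\]
The key pointwise reduction, which is where the two hypotheses on $f$ enter, is that $f(x) \leq M + d_c(x, A)$ for every $x \in [0,1]^n$. Indeed, if $y = \sum_j \lambda_j a_j$ is a convex combination of points $a_j \in A$, then convexity of $f$ gives $f(y) \leq \sum_j \lambda_j f(a_j) \leq M$, and the $1$-Lipschitz property gives $f(x) \leq f(y) + \norm{x - y}_2 \leq M + \norm{x - y}_2$; minimising over $y$ delivers the claim.

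The main step is then to establish Talagrand's convex distance inequality
\[
	\expec[\big]{\exp\p[\big]{d_c(X, A)^2 / 4}} \leq \frac{1}{\prob{X \in A}}
\]
for any measurable $A \subseteq [0,1]^n$. I would prove this by induction on the number of coordinates $n$. The base case $n = 1$ reduces to a short one-dimensional estimate using that $X_1 \in [0,1]$. For the inductive step, I would write $X = (X', X_n)$, condition on $X_n$, and control $d_c((x', x_n), A)^2$ by a convex interpolation between the in-slice distance $d_c(x', A_{x_n})^2$, with $A_{x_n} = \set{x' \st (x', x_n) \in A}$, and the distance to the projection $d_c(x', \pi(A))^2 + (x_n - \bar{x}_n)^2$, where $\pi(A)$ is the image of $A$ in $[0,1]^{n-1}$. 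Applying the inductive hypothesis to each $(n-1)$-dimensional set, integrating over $X_n$, and optimising the mixture parameter then produces exactly the exponent $1/4$.

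With Talagrand's inequality in hand, the theorem follows from Markov's inequality: since $\set{f(X) > M + t} \subseteq \set{d_c(X, A) > t}$,
\[
	\prob{f(X) - M > t} \leq \prob{d_c(X, A) > t} \leq e^{-t^2/4} \expec[\big]{e^{d_c(X, A)^2 / 4}} \leq \frac{e^{-t^2/4}}{\prob{X \in A}} \leq 2 e^{-t^2/4}.
\]
The hard part is the induction proving the convex distance inequality itself: the sharp constant $1/4$ depends on a delicate optimisation inside the conditioning step, which is what makes this inequality substantially deeper than, say, the bounded-differences inequality. The reduction from convex Lipschitz $f$ to $d_c(X, A)$, by contrast, is a short and soft consequence of convexity and the Lipschitz assumption.
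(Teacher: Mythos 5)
The paper does not prove this theorem; it is cited directly as Theorem 7.12 of Boucheron, Lugosi, and Massart, so there is no in-paper argument to compare against. Your sketch is the standard proof via Talagrand's convex distance inequality: the reduction showing $f(x) \leq \bfM f(X) + d_c(x,A)$ for the sub-median set $A$ (first convexity, then Lipschitz), and the closing Markov step, are both correct. One point worth flagging: the quantity $d_c(x,A) = \inf_{y \in \operatorname{conv}(A)} \norm{x - y}_2$ is not literally Talagrand's convex distance $d_T(x,A)$, which is built from indicator vectors $(\ind[x_i \neq y_i])_i$ rather than Euclidean displacements; but since all coordinates lie in $[0,1]$ one has $\abs{x_i - y_i} \leq \ind[x_i \neq y_i]$, hence $d_c \leq d_T$, and the subgaussian moment bound for $d_T$ transfers to $d_c$. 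Your sketch of the induction establishing that moment bound is in the right spirit but imprecise in its mechanics: the penalty for stepping to a different slice in the last coordinate is a fixed $(1-\lambda)^2$ coming from the indicator structure of $d_T$, not a data-dependent term like $(x_n - \bar{x}_n)^2$. You rightly flag the induction as the substantive part; that is exactly the content the paper chooses to treat as a black box by citing the reference rather than reproving it.
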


Here $\bfM f(X)$ is the median of the random variable $f(X)$.
However, in \Cref{prop:wigner-bound} we control the expected value of the operator norm, not the median.
Once an exponential concentration like~\eqref{eq:lipschitz-concentration} holds, however, these quantities must be very close.
Indeed, $\abs{\bfM f(X) - \Expec f(X)} \leq 4$ follows from~\cite{Boucheron2013-li}*{Exercise 2.2}.

\begin{corollary}
\label{cor:gnp-adjacency}
For any $p \in (0,1)$ and $n \geq 1000$, the adjacency matrix $A$ of $G(n,p)$ satisfies
\begin{equation}
\label{eq:deviation-adjacency}
    \prob[\big]{ \norm{\Delta_A} \geq 4\sqrt{p(1-p) n} + 4+ t } \leq 2e^{-t^2/4}.
\end{equation}
Moreover, for every $\gamma > 0$, there is $n_0(\gamma)$ such that if $p = \gamma \log n / n$ and $n \geq n_0(\gamma)$, then we have
\begin{equation}
\label{eq:deviation-adjacency-gamma}
    \prob[\big]{ \norm{\Delta_A} \geq 8\sqrt{ \gamma \log n}} \leq 2n^{-\gamma}.
\end{equation}
\end{corollary}
\begin{proof}
We apply \Cref{thm:lipschitz-concentration} with $d = \binom{n}{2}$ and $f \from \RR^d \to \RR$ defined as $f(X_1, \dotsc, X_d) = \norm{M - pJ}$, where $M$ is the symmetric matrix whose upper diagonal is formed by $X_1, \dotsc, X_d$.
It is not hard to see that $f$ is convex and $1$-Lipschitz.
As discussed above, this implies that $\abs{\bfM f(X) - \Expec f(X)} \leq 4$.
Together with \Cref{prop:wigner-bound} (and \Cref{rem:wigner-bound}), we have that $\bfM \norm{\Delta_A} \leq 4 \sqrt{p(1-p)n}+4$ for $n \geq 1000$.
Thus,~\eqref{eq:deviation-adjacency} follows from \Cref{thm:lipschitz-concentration}.
When $p = \gamma \log n / n$, \eqref{eq:deviation-adjacency-gamma} follows if $n$ large enough as a function of $\gamma$.
\end{proof}

\subsection{Concentration of the Laplacian matrix}
\label{subsec:erdos-laplacian}

Denote by $X_k$ the number of vertices of degree precisely $k$ in $G(n,p)$.
We also denote by $X_{\leq k}$ and $X_{\geq k}$ the number of vertices with degree at most $k$ and at least $k$, respectively.
Even though $X_k$ is not a binomial random variable, as the degrees of different vertices are not independent, $X_k$ is well approximated by a $\Bin(n,f_k)$ variable, where $f_k \defined \prob[\big]{\Bin(n-1,p) = k}$.

The following two lemmas allow us to control $\expec{X_{\leq k}}$ and $\expec{X_{\geq k}}$ in terms of $\expec{X_k}$, as long as $k$ is far enough from the average degree.

\begin{lemma}
\label{lem:lower-expec}
For every $0 < c < 1$, $p \leq c/(1 - c^2)$ and $k \leq (1 - c) pn$, we have
\begin{align*}
    \expec{X_{\leq k} } \leq c^{-2} \expec{X_{k}}.
\end{align*}
\end{lemma}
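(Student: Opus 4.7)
The plan is to pass to the expectation of a single binomial variable. Writing $X_k = \sum_{v \in V} \ind_{\deg(v) = k}$ and using linearity, we have $\Expec{X_k} = n f_k$ where $f_k \defined \prob{\Bin(n-1, p) = k}$, and similarly $\Expec{X_{\leq k}} = n \sum_{j=0}^k f_j$. Hence the inequality reduces to showing $\sum_{j=0}^{k} f_j \leq c^{-2} f_k$.

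The next step is the standard ratio identity $f_{j-1}/f_j = \frac{j(1-p)}{(n-j)p}$, which is increasing in $j$. Therefore, for every $j \leq k$,
\begin{equation*}
    \frac{f_{j-1}}{f_j} \leq \frac{k(1-p)}{(n-k)p}.
\end{equation*}
The heart of the argument is to show that, under the two hypotheses on $p$ and $k$, this ratio is bounded by $1 - c^2$. The hypothesis $k \leq (1-c) p n$ alone gives only a bound of $1 - c$, which is the naive estimate and would yield $c^{-1}$ rather than $c^{-2}$; so the extra hypothesis $p \leq c/(1 - c^2)$ must be used to strengthen the bound on $n - k$. The key observation is
\begin{equation*}
    k \leq (1 - c) p n \leq (1 - c) \cdot \frac{c}{1 - c^2} \cdot n = \frac{c}{1 + c} n,
\end{equation*}
which gives $n - k \geq n/(1 + c)$. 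Substituting,
\begin{equation*}
    \frac{k(1-p)}{(n-k)p} \leq \frac{(1-c) p n \cdot (1-p)}{n/(1+c) \cdot p} = (1 - c^2)(1 - p) \leq 1 - c^2.
\end{equation*}

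With this ratio bound, a telescoping product yields $f_{j} \leq (1 - c^2)^{k - j} f_k$ for all $j \leq k$. Summing the geometric series,
\begin{equation*}
    \sum_{j=0}^{k} f_j \leq f_k \sum_{i=0}^{\infty} (1 - c^2)^i = \frac{f_k}{c^2},
\end{equation*}
and multiplying through by $n$ gives the desired $\Expec{X_{\leq k}} \leq c^{-2} \Expec{X_k}$. There is no real obstacle here; the only subtlety is noticing that the joint hypothesis on $p$ and $k$ is precisely what is needed to promote the trivial ratio bound $1 - c$ into $1 - c^2$, which is what converts the geometric-series factor $c^{-1}$ into the claimed $c^{-2}$.
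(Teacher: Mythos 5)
Your proof is correct and follows essentially the same route as the paper's: reduce to the binomial tail, bound the ratio $f_{j-1}/f_j$ by $1-c^2$ using both hypotheses, and sum the geometric series. The only (cosmetic) difference is in how the bound $1-c^2$ is extracted — the paper plugs $i \leq (1-c)pn$ into $i/(n-i)$ and simplifies $\tfrac{1-c}{1-(1-c)p} \leq 1-c^2$, whereas you first derive $k \leq \tfrac{c}{1+c}n$ and bound numerator and denominator separately — but this is the same algebra rearranged.
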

\begin{proof}
For all $i \leq (1 - c)pn$, we have
\begin{align*}
    \frac{f_{i-1}}{f_i} &= \frac{\binom{n-1}{i-1} p^{i-1} (1-p)^{n-i} }{\binom{n-1}{i} p^i (1-p)^{n-i-1}} = \frac{i(1-p)}{(n-i)p} \\
    &\leq \frac{(1 - c)pn}{(n - (1-c)pn)p} = \frac{(1-c)}{1 - (1-c)p} \leq 1 - c^2.
\end{align*}
Here we used that the function $x \to x/(n-x)$ is increasing for $x \in (0,n)$ and that $p \leq c/(1-c^2)$ on the last step.
Hence
\begin{align*}
    \prob[\big]{\Bin(n-1,p) \leq k}
    &= \sum_{i=0}^{k} f_i
    \leq \sum_{i=0}^{k} (1 - c^2)^i f_{k}
    \leq c^{-2} f_{k} \\
    &\leq c^{-2} \prob[\big]{\Bin(n-1,p) = k}.
\end{align*}
Multiplying by $n$ both sides gives $\expec{X_{\leq k}} \leq c^{-2} \expec{X_{k}}$.
\end{proof}

\begin{lemma}
\label{lem:upper-expec}
For every $c > 0$ and $k \geq (1 + c) pn$, we have
\begin{align*}
    \expec{X_{\geq k} } \leq \p[\big]{1+c^{-1}} \expec{X_{k}}.
\end{align*}
\end{lemma}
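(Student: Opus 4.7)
The plan is to mirror the strategy of \Cref{lem:lower-expec}, but in the opposite direction: instead of bounding the ratio of consecutive binomial probabilities $f_{i-1}/f_i$ from above, I will bound $f_{i+1}/f_i$ from above on the right tail, and then sum a geometric series.

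Concretely, writing $f_i \defined \prob[\big]{\Bin(n-1,p) = i}$ so that $\expec{X_i} = n f_i$, a direct computation gives
\begin{equation*}
    \frac{f_{i+1}}{f_i} = \frac{(n-1-i)p}{(i+1)(1-p)}.
\end{equation*}
The first step of the argument will be to show that, for every $i \geq k \geq (1+c)pn$, this ratio is at most $1/(1+c)$. Indeed, since $i + 1 > (1+c)pn$ and $n - 1 - i < n(1-p)$ (the latter because $i \geq (1+c)pn \geq np$, hence $n-1-i \leq n - 1 - np < n(1-p)$), one obtains
\begin{equation*}
    \frac{f_{i+1}}{f_i} \leq \frac{(n-1-i)p}{(1+c)pn(1-p)} < \frac{1}{1+c}.
\end{equation*}

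The second step is to iterate: by telescoping, $f_{k+j} \leq (1+c)^{-j} f_k$ for every $j \geq 0$. Summing the geometric series yields
\begin{equation*}
    \prob[\big]{\Bin(n-1,p) \geq k} = \sum_{j \geq 0} f_{k+j} \leq f_k \sum_{j \geq 0} (1+c)^{-j} = \p[\Big]{1 + c^{-1}} f_k.
\end{equation*}
Multiplying through by $n$ converts the binomial probabilities back into expectations and gives the claimed inequality $\expec{X_{\geq k}} \leq (1 + c^{-1}) \expec{X_k}$.

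There is essentially no obstacle here; the argument is symmetric to the proof of \Cref{lem:lower-expec}, with the mild twist that on the right tail we do not need the hypothesis $p \leq c/(1-c^2)$ because the factor $n-1-i$ in the numerator of the ratio is automatically controlled by $n(1-p)$ once $i \geq (1+c)pn$. The only point to double-check is the elementary inequality $n - 1 - i \leq n(1-p)$ on the relevant range, which follows from $i \geq np$ as noted above.
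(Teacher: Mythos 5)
Your proof is correct and takes essentially the same route as the paper: both bound the consecutive ratio of binomial probabilities on the right tail by $1/(1+c)$ and then sum a geometric series to get the factor $1+c^{-1}$. The paper writes the ratio as $f_i/f_{i-1}$ and simplifies to $(1-(1+c)p)/((1+c)(1-p)) \leq 1/(1+c)$, whereas you write it as $f_{i+1}/f_i$ and use $n-1-i < n(1-p)$, but these are cosmetic variations of the same calculation.
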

\begin{proof}
For all $i \geq (1 +  c) pn$, we have
\begin{align*}
    \frac{f_i}{f_{i-1}} &=  \frac{(n-i)p}{i(1-p)} \leq \frac{(n - (1+c)pn)p}{(1 + c)pn (1-p)} = \frac{(1 - (1+c)p)}{(1 + c) (1-p)} \leq \frac{1}{1 + c}.
\end{align*}
As in the proof of \Cref{lem:lower-expec}, we have
\begin{align*}
    \prob[\big]{\Bin(n-1,p) \geq k}
    &= \sum_{i=k}^{n} f_i
    \leq \sum_{i=0}^{n - k} \p[\Big]{\frac{1}{1 + c}}^i f_{k}
    \leq \p[\Big]{\frac{1 + c}{c}} f_{k}.
\end{align*}
Multiplying by $n$ both sides gives $\expec{X_{\geq k}} \leq \p[\big]{1+c^{-1}} \expec{X_k}$.
\end{proof}

Finally, we control the probability that $G(n,p)$ has the required spectral properties for our amplification method to work.

\begin{proposition}
\label{thm:gnp-degree}
For any $0 < \eps < 1$ and $1 + \eps < \gamma < 1 + \eps^{-2}$, there exists a constant $C = C(\gamma, \eps) > 0$ such that the following is true.
Let $p \defined \gamma \log n / n$, $\degm \defined \gamma \log n$, $\anorm \defined 8 (\gamma \log n)^{-1/2}$.
Define $\lnorm^- = \lnorm^-(\gamma, \eps)$ as the unique root in $c < 0$ of the equation
\begin{equation}
\label{eq:implicit-eps-neg}
    \gamma \p[\big]{ (1 + c - \eps) \log(1 + c - \eps) - (c - \eps)} = 1 + \eps,
\end{equation}
and define $\lnorm^+ = \lnorm^+(\gamma, \eps)$ as the unique root in $c > 0$ of the equation
\begin{equation}
\label{eq:implicit-eps-pos}
    \gamma \p[\big]{ (1 + c + \eps) \log(1 + c + \eps) - (c + \eps)} = 1 + \eps.
\end{equation}
Then the probability that $G(n,p)$ is an $(n, \degm, \anorm, \lnorm^-, \lnorm^+)$-expander is at least
\begin{equation*}
     1 - C (\log n)^4 n^{-\eps}.
\end{equation*}
\end{proposition}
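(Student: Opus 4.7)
The plan is to decompose the expander condition using \Cref{prop:degree-spectral} into a bound on $\norm{\Delta_A}$ and quantitative control of the extreme degrees $\degmin$ and $\degmax$, then union-bound over the three resulting failure events.

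First, \Cref{cor:gnp-adjacency} applied with $p = \gamma \log n / n$ yields $\prob[\big]{\norm{\Delta_A} > \anorm \degm} \leq 2 n^{-\gamma}$. Since $\gamma > 1 + \eps$, this is $O(n^{-(1+\eps)})$, much smaller than the target $n^{-\eps}$ and easily absorbed.

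Second, to control the minimum and maximum degrees we use \Cref{lem:lower-expec} and \Cref{lem:upper-expec} to reduce to a bound on $\Expec X_k$, the expected number of vertices of degree exactly $k$. The key estimate is Stirling's formula for a binomial probability: for $k = (1 + \kappa) pn$, one has
\begin{equation*}
    \Expec X_k = n \cdot \prob[\big]{\Bin(n-1, p) = k} \leq \frac{C}{\sqrt{pn}} \, n^{1 - \gamma h(\kappa)},
\end{equation*}
where $h(\kappa) = (1 + \kappa) \log(1 + \kappa) - \kappa$, with an explicit constant $C$ depending on $\gamma$ and $\kappa$ in a controlled way (since the Kullback--Leibler divergence $D_{\mathrm{KL}}((1+\kappa)p, p)$ equals $p\, h(\kappa)$ up to an $O(p^2)$ correction that is negligible in our regime). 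To bound $\prob[\big]{\degmin < k^-}$ with $k^- = \lceil (1 + \lnorm^- + \anorm)\degm \rceil$, one applies Markov's inequality and \Cref{lem:lower-expec}, which is applicable since $\lnorm^- < 0$ is bounded away from zero and $\anorm = o(1)$, to obtain $\prob[\big]{\degmin < k^-} \leq c^{-2}\, \Expec X_{k^- - 1}$ for some $O(1)$ constant $c$. The implicit equation~\eqref{eq:implicit-eps-neg} is calibrated so that, after absorbing the $O(\anorm) = O((\log n)^{-1/2})$ perturbation into the $\eps$-slack, the exponent $\gamma h(\kappa)$ evaluated at the relevant deviation exceeds $1 + \eps$, yielding a bound of order $C(\gamma, \eps) (\log n)^a n^{-\eps}$ for some explicit power $a$. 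The treatment of $\degmax$ via \Cref{lem:upper-expec} and equation~\eqref{eq:implicit-eps-pos} is symmetric.

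A union bound over the three bad events combined with \Cref{prop:degree-spectral} then delivers the claimed expander property, with the global synchronisation conclusion following from \Cref{thm:amplification-main}. The $(\log n)^4$ prefactor is a convenient envelope for the polylogarithmic losses: the Stirling factor $\sqrt{pn} = \sqrt{\gamma \log n}$, the combinatorial bookkeeping in the Markov step, and the prefactors that appear when converting the implicit-equation exponents into non-asymptotic inequalities. The main technical obstacle is to carry out the Stirling analysis with sufficiently explicit constants to validate the non-asymptotic bound $C(\gamma, \eps)(\log n)^4 n^{-\eps}$; in particular, one must track carefully how the implicit-equation calibration interacts with the $\anorm$ perturbation, since for $\gamma$ close to $1 + \eps$ the root $\lnorm^-$ approaches $-1$ and the constant $c$ in \Cref{lem:lower-expec} degenerates, so the bookkeeping becomes most delicate precisely at the boundary of the allowed range $1+\eps < \gamma < 1 + \eps^{-2}$.
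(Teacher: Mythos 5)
Your proposal follows essentially the same route as the paper's proof: split the expander property via \Cref{prop:degree-spectral} into a spectral-norm bound on $\Delta_A$ and two-sided degree control, handle the spectral norm with \Cref{cor:gnp-adjacency}, and control $\degmin, \degmax$ through \Cref{lem:lower-expec}, \Cref{lem:upper-expec}, Markov's inequality, and an exponential estimate on $\Expec X_k$ that is calibrated by the implicit equations \eqref{eq:implicit-eps-neg}--\eqref{eq:implicit-eps-pos}, followed by a union bound. The small technical differences do not change the argument: you use full Stirling with the $1/\sqrt{pn}$ factor whereas the paper settles for the cruder $k! \geq (k/e)^k$ (losing a polylogarithmic factor, which is harmless), and you build the $\anorm$-perturbation directly into $k^\pm$ whereas the paper sets $k^\pm$ with an $\eps$-perturbation and then observes $\anorm \leq \eps$ for $n$ large.

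One inaccuracy to flag in your concluding remark: you claim the parameter $c$ in \Cref{lem:lower-expec} degenerates as $\gamma \to 1 + \eps$ because $\lnorm^- \to -1$. That is the wrong end of the range. As $\gamma \to 1 + \eps$ one has $\lnorm^- \to \eps - 1$ and the relevant $c \approx -\lnorm^-$ approaches $1 - \eps$, which is interior to $(0,1)$: the multiplier $c^{-2}$ stays bounded and the constraint $p \leq c/(1-c^2)$ is easier (the right-hand side blows up as $c \to 1$). The regime where the constant actually deteriorates is the upper end $\gamma \to 1 + \eps^{-2}$, where $\lnorm^-$ moves toward $0$, $c \to 0$, and $c^{-2}$ blows up; the upper constraint on $\gamma$ (and the inequality $1 + \eps^{-2} \leq (1+\eps)/\p{(1-\eps)\log(1-\eps)+\eps}$ invoked in \Cref{rem:implicit-eps}) is precisely what keeps this under control. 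This is a side comment and does not affect the correctness of your proposed argument.
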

\begin{remark}
\label{rem:implicit-eps}
As $\eps > 0$ is taken to be arbitrarily small, the solutions $\lnorm^- = \lnorm^-(\gamma, \eps)$ and $\lnorm^+ = \lnorm^+(\gamma, \eps)$ to \eqref{eq:implicit-eps-neg} and \eqref{eq:implicit-eps-pos}, respectively, approach the solutions $\lnorm^-(\gamma)$ and $\lnorm^+(\gamma)$ of equation \eqref{eq:implicit}, described in \Cref{fig:implicit-equation}.
Moreover, as we assumed that $1 + \eps < \gamma < 1 + \eps^{-2}$, it holds that $-1 < \lnorm^-(\gamma) \leq \lnorm^-(\gamma, \eps) < 0 < \lnorm^+(\gamma, \eps) \leq \lnorm^+(\gamma)$.
Indeed, we used that $1 + \eps^{-2} \leq (1+\eps)/\p[\big]{(1-\eps) \log(1-\eps) + \eps}$ for $0 < \eps < 1$.
\end{remark}
\begin{proof}
By \Cref{cor:gnp-adjacency}, we already have that $G(n,p)$ has $\norm{\Delta_A} \leq \anorm$ with probability $\geq 1 - 2n^{-\gamma}$.
In view of \Cref{prop:degree-spectral}, we have that $G(n,p)$ is an $(n, \degm, \anorm, \lnorm^-, \lnorm^+)$-expander if we prove that
\begin{equation}
\label{eq:gnp-degree-goal}
    (1 + \lnorm^- + \anorm) pn
    \leq \degmin(G) \leq \degmax(G)
    \leq (1 + \lnorm^+ - \anorm) pn.
\end{equation}

\noindent Let $k = \delta \log n$ for some $\delta > 0$ to be chosen later and observe that
\begin{align*}
    \expec{X_k} &= n \binom{n-1}{k} p^k (1-p)^{n-k-1} \leq n \p[\Big]{\frac{n^k}{k!}} p^k e^{-pn + p(k+1)} \\
    &\leq \frac{n^{1 - \gamma} (\gamma \log n)^k e^{2pk}}{k!},
\end{align*}
where we used that $(1-x) \leq e^{-x}$ and that $\binom{n}{k} \leq n^k / k!$.
Let $L_k \defined (\log n)^k / k!$, so
\begin{align*}
    \log L_k
    &= k \loglog n - \log k! \\
    &\leq \delta \log n \loglog n - \delta \log n \log (\delta \log n) + \delta \log n \\
    &= \p[\big]{ \delta - \delta \log \delta }\log n,
\end{align*}
where we used that $k! \geq (k/e)^k$.
Finally, setting $\delta \defined (1 + c \pm \eps)\gamma$, we have
\begin{align*}
    \expec{X_k} &\leq \exp\p[\Big]{\p[\big]{1 - \gamma + \delta - \delta \log \delta + \delta \log\gamma}\log n + 2pk} \\
    &\leq \exp\p[\Big]{\p[\big]{1 - \gamma\p[\big]{ (1 + c \pm \eps)\log(1 + c \pm \eps) - (c \pm \eps) } }\log n  + 2pk}.
\end{align*}
Now define $k^- \defined \floor{(1 + \lnorm^- - \eps) \gamma \log n}$ and $k^+ \defined \ceil{(1 + \lnorm^+ + \eps) \gamma \log n}$, where $-1 < \lnorm^- < 0 < \lnorm^+$ are the solutions to~\eqref{eq:implicit-eps-neg} and~\eqref{eq:implicit-eps-pos} as above.
This gives
\begin{align*}
    \expec{X_{k^\pm}} &\leq \exp\p[\big]{ -\eps \log n + 2pk^\pm + 4\loglog n},
\end{align*}
where the extra $4 \loglog n$ appeared as formally we took $k = \delta \log n + \eta$, with $\abs{\eta} \leq 1$ giving the rounding up or down to $k^+$ and $k^-$ respectively.
To see this, observe that the upper bound $\expec{X_k} \leq \exp\p[\Big]{ (1 - \gamma) \log n + k \log\gamma + \log L_k + 2pk}$ varies by at most $\log \gamma + \loglog n + \log(k+1) + 2p \leq 4\log\log n$, as $\abs{L_k - L_{k + \eta}} \leq \abs{\eta} \log\log n + \log(k+1)$.

Now we apply \Cref{lem:lower-expec} to control the expected value of $X_{\leq k^-}$ by the expected value of $X_{k}$.
We obtain by Markov's inequality, that
\begin{align*}
    \prob[\big]{\degmin \leq k^-}
    &= \prob[\big]{X_{\leq k^-} \geq 1}
    \leq \expec[\big]{X_{\leq k^-}}
    \leq \frac{1}{(\lnorm^- + \eps)^2} \expec[\big]{X_{k^-}} \\
    &\leq \frac{1}{(\lnorm^- + \eps)^2} \exp\p[\Big]{ - \eps\log n  + 2pk^- + 4\loglog n}.
\end{align*}
We do the same for the maximum degree,
\begin{align*}
    \prob[\big]{\degmax \geq k^+}
    &= \prob[\big]{X_{\geq k^+} \geq 1}
    \leq \expec[\big]{X_{\geq k^+}}
    \leq \frac{1 + \lnorm^+ - \eps}{\lnorm^+ - \eps} \expec[\big]{X_{k^+}} \\
    &\leq \frac{1 + \lnorm^+ - \eps}{\lnorm^+ - \eps} \exp\p[\Big]{ - \eps\log n + 2pk^+ + 4\loglog n}.
\end{align*}
Therefore, we obtain that~\eqref{eq:gnp-degree-goal} does not hold with probability at most
\begin{align*}
    \p[\Big]{\frac{1}{(\lnorm^- + \eps)^2} + \frac{1 + \lnorm^+ - \eps}{\lnorm^+ - \eps} } (\log n)^4 n^{-\eps} e^{2pk^+} + 2n^{-\gamma}
    \leq C(\gamma, \eps) (\log n)^4 n^{-\eps},
\end{align*}
as we wanted.
Notice that we use $\anorm \leq \eps$ for large enough $n$, which we can assume by increasing $C(\gamma, \eps)$.
\end{proof}

\begin{proof}[Proof of \Cref{thm:gnp-spectral}]
From \Cref{thm:gnp-degree} and \Cref{rem:implicit-eps}, we know that $G(n,p)$ is an $(n, \gamma \log n, 8(\gamma \log n)^{-1/2}, \lnorm^-(\gamma), \lnorm^+(\gamma))$-expander with probability $\geq 1 - C(\log n)^4 n^{-\eps}$.
Such a graph is globally synchronizing by \Cref{thm:amplification-main}.
\end{proof}


\section{Discussion and open questions}
\label{sec:generalizations}

One of the consequences of our main result is to settle the conjecture of Ling, Xu, and Bandeira~\cite{Ling2019-yn}.
In a nutshell, the conjecture says that \ER random graphs globally synchronize right above the connectivity threshold.
A natural challenge is to close the gap between connectivity and synchronization in the critical window, i.e., when $pn = \log n + c_n$ with $c_n \gg 1$.
In our result, $c_n = \eps \log n$ for a fixed $\eps > 0$.
A careful look at the computations allows us to take $\eps$ going to zero, but we do not establish any convergence rate so it would not be sharp.

In fact, we conjecture that a stronger hitting time version of Ling, Xu, and Bandeira's result holds.
Consider the random graph process $\set{G(n,m)}_{m = 0}^{n(n-1)/2}$, which is a sequence of random graphs on $n$ vertices with $G(n,0)$ being empty and $G(n,m+1)$ obtained by adding to $G(n,m)$ an edge that is chosen uniformly from the set of missing edges in $G(n,m)$.
Let $\tau = \min\set{m \st G(n,m) \text{ is connected}}$, that is, the moment that the random graph process becomes connected.
We believe that $G(n,m)$ is globally synchronizing for all $m \geq \tau$, completely sealing the gap between connectivity and synchrony.
A followup article by Jain, Mizgerd, and Sawhney~\cite{Jain2025-lr} has established this conjecture.

Despite our result for $G(n,p)$ being asymptotically sharp, we did not optimize it for any practically relevant range of $n$, particularly near the connectivity threshold.
For instance, if we take $p = 2 (\log n) / n$, we may need an impractically large $n$.

Another important consequence of our techniques is that random $d$-regular graphs synchronize for sufficiently large but constant degree $d$.
Following the same intuition, one expects that a random $d$-regular graph synchronizes when $d \geq 3$ because it matches the connectivity threshold for such a graph.
However, we extensively used \Cref{lem:rho1-bound,lem:rho1-sin} to start our amplification argument; these Lemmas do not give any useful bounds unless $d \geq 66$, as discussed in \Cref{sec:numerics}, which places a limit on what can be theoretically achieved without new ideas.

We suspect that our results may admit other generalizations.
Examples may include taking the oscillators in higher-dimensional spheres and considering other classes of graphs.
For instance, one may look at random graphs with non-uniform edge distribution or with edges distributed according to some underlying geometric structure, as considered in \cites{Diaz-Guilera2009-fe,Leyva2011-gd,Estrada2015-ha,Abdalla2022-ai}.

An interesting direction is to investigate whether these techniques can be generalized to other optimization landscapes arising in statistical inference problems, for example, the Burer--Monteiro relaxation in community detection~\cite{Bandeira2016-qr}.
The main issue in this particular case is that it would require us to consider signed adjacency matrices.
We used many times (indirectly) that the entries of the adjacency matrix are non-negative.
We note that since the first version of this paper appeared, progress has been made on  this problem.
A higher-rank version was settled by the first and second author, Boumal and McRae in \cite{mcrae2025nonconvex}, using completely different tools from optimization.
In further work, Rakoto Endor and Waldspurger~\cite{Endor2024-uo} proposed a clever dual-certificate technique to reduce the rank with near-optimal estimates.
McRae combined the techniques from \cites{mcrae2025nonconvex,Endor2024-uo} to establish optimal estimates that gives an alternative proof of \Cref{thm:ling-xu-bandeira-conjecture}.
On the other hand, it is not clear if this new line of work can recover a general result as our \Cref{thm:amplification-main}.

Another interesting question we have not considered is how quickly does a random initial state synchronize.
We believe that our techniques can be used to show that synchronization occurs rather rapidly for sufficiently expanding graphs.

To conclude, we mention an important research line proposed by Wiley, Strogatz, and Girvan~\cite{Wiley2006-fz} about the size of the basin of attraction for the synchronous state.
There may be situations where families of graphs are not globally synchronizing, but have a form of \indef{asymptotically global synchrony}, in the sense that the aggregate size of the basins of attraction of the spurious local minima is small, vanishing as the number of vertices of the underlying graphs grows.

\section*{Acknowledgments} We thank Nicolas Boumal and Philippe Rigollet for helpful discussions.
We thank the anonymous referees for the valuable comments that improved our manuscript. 

Most of the work was done while the first author was affiliated to ETH Z\"urich. The first author is supported by NSF and Simons Research Collaborations on
the Mathematical and Scientific Foundations of Deep Learning.
The third author was supported by NSF (DMS-2319371).
The fourth author was partially supported by ERC Starting Grant 101163189 and UKRI Future Leaders Fellowship MR/X023583/1.
The last author is grateful for support from DARPA (DARPA-PA-24-04-07), DOE (DE-AC52-07NA27344), Office of Naval Research (N00014-23-1-2729), NSF CAREER (DMS-2045646), and NSF (DMS-2319621).


\begin{bibdiv}
\begin{biblist}

\bib{Abdalla2022-ai}{article}{
      author={Abdalla, Pedro},
      author={Bandeira, Afonso~S.},
      author={Invernizzi, Clara},
       title={Guarantees for spontaneous synchronization on random geometric graphs},
        date={2024},
     journal={SIAM Journal on Applied Dynamical Systems},
      volume={23},
      number={1},
       pages={779\ndash 790},
}

\bib{Absil2006-lg}{article}{
      author={Absil, Pierre-Antoine},
      author={Kurdyka, Krzysztof},
       title={{On the stable equilibrium points of gradient systems}},
        date={2006},
        ISSN={0167-6911},
     journal={Systems \& Control Letters},
      volume={55},
      number={7},
       pages={573\ndash 577},
}

\bib{Acebron2005-lt}{article}{
      author={Acebrón, Juan~A},
      author={Bonilla, Luis~L},
      author={Pérez~Vicente, Conrad~J},
      author={Ritort, Félix},
      author={Spigler, Renato},
       title={{The Kuramoto model: A simple paradigm for synchronization phenomena}},
        date={2005},
        ISSN={0034-6861},
     journal={Reviews of Modern Physics},
      volume={77},
      number={1},
       pages={137\ndash 185},
}

\bib{Anderson2009-nh}{book}{
      author={Anderson, Greg~W},
      author={Guionnet, Alice},
      author={Zeitouni, Ofer},
       title={{An Introduction to Random Matrices}},
      series={Cambridge Studies in Advanced Mathematics},
   publisher={Cambridge University Press},
        date={2009},
      volume={73},
        ISBN={9780511801334},
}

\bib{Arenas2008-sr}{article}{
      author={Arenas, Alex},
      author={Díaz-Guilera, Albert},
      author={Kurths, Jurgen},
      author={Moreno, Yamir},
      author={Zhou, Changsong},
       title={{Synchronization in complex networks}},
        date={2008},
        ISSN={0370-1573},
     journal={Physics Reports},
      volume={469},
      number={3},
       pages={93\ndash 153},
}

\bib{Bandeira2016-qr}{inproceedings}{
      author={Bandeira, Afonso~S},
      author={Boumal, Nicolas},
      author={Voroninski, Vladislav},
       title={{On the low-rank approach for semidefinite programs arising in synchronization and community detection}},
        date={2016},
   booktitle={{29th Annual Conference on Learning Theory}},
      series={Proceedings of Machine Learning Research},
      volume={49},
   publisher={PMLR},
       pages={361\ndash 382},
}

\bib{Bandeira2016-gu}{article}{
      author={Bandeira, Afonso~S},
      author={van Handel, Ramon},
       title={{Sharp nonasymptotic bounds on the norm of random matrices with independent entries}},
        date={2016},
     journal={The Annals of Probability},
      volume={44},
      number={4},
       pages={2479\ndash 2506},
}

\bib{Bick2020-zp}{article}{
      author={Bick, Christian},
      author={Goodfellow, Marc},
      author={Laing, Carlo~R},
      author={Martens, Erik~A},
       title={{Understanding the dynamics of biological and neural oscillator networks through exact mean-field reductions: a review}},
        date={2020},
        ISSN={2190-8567},
     journal={The Journal of Mathematical Neuroscience},
      volume={10},
      number={1},
       pages={9},
}

\bib{Bollobas2001-dl}{book}{
      author={Bollobás, Béla},
       title={{Random Graphs}},
     edition={2},
      series={Cambridge Studies in Advanced Mathematics},
   publisher={Cambridge University Press},
        date={2001},
      volume={73},
        ISBN={9780511814068},
}

\bib{Boucheron2013-li}{book}{
      author={Boucheron, Stéphane},
      author={Lugosi, Gábor},
      author={Massart, Pascal},
       title={{Concentration Inequalities: A Nonasymptotic Theory of Independence}},
   publisher={Oxford University Press},
        date={2013},
        ISBN={9780199535255},
}

\bib{Canale2007-oe}{inproceedings}{
      author={Canale, Eduardo},
      author={Monzon, Pablo},
       title={{Gluing Kuramoto coupled oscillators networks}},
        date={2007},
   booktitle={{2007 46th IEEE Conference on Decision and Control}},
       pages={4596\ndash 4601},
}

\bib{Canale2022-iy}{article}{
      author={Canale, Eduardo~A},
       title={{From weighted to unweighted graphs in Synchronizing Graph Theory}},
        date={2022},
      eprint={2209.06362},
}

\bib{Canale2015-ez}{article}{
      author={Canale, Eduardo~A},
      author={Monzón, Pablo},
       title={{Exotic equilibria of Harary graphs and a new minimum degree lower bound for synchronization}},
        date={2015},
     journal={Chaos},
      volume={25},
      number={2},
       pages={023106},
}

\bib{Chung1997-se}{book}{
      author={Chung, Fan R~K},
       title={{Spectral Graph Theory}},
      series={CBMS Regional Conference Series in Mathematics},
   publisher={American Mathematical Society},
        date={1997},
      volume={92},
        ISBN={9780821803158},
}

\bib{Daido1992-td}{article}{
      author={Daido, Hiroaki},
       title={{Order Function and Macroscopic Mutual Entrainment in Uniformly Coupled Limit-Cycle Oscillators}},
        date={1992},
     journal={Progress of Theoretical Physics},
      volume={88},
      number={6},
       pages={1213\ndash 1218},
}

\bib{Diaz-Guilera2009-fe}{article}{
      author={Díaz-Guilera, Albert},
      author={Gómez-Gardeñes, Jesús},
      author={Moreno, Yamir},
      author={Nekovee, Maziar},
       title={{Synchronization in random geometric graphs}},
        date={2009},
     journal={International Journal of Bifurcation and Chaos},
      volume={19},
      number={02},
       pages={687\ndash 693},
}

\bib{Dorfler2014-kx}{article}{
      author={Dörfler, Florian},
      author={Bullo, Francesco},
       title={{Synchronization in complex networks of phase oscillators: A survey}},
        date={2014},
        ISSN={0005-1098},
     journal={Automatica},
      volume={50},
      number={6},
       pages={1539\ndash 1564},
}

\bib{Erdos1959-jh}{article}{
      author={Erdős, Paul},
      author={Rényi, Alfréd},
       title={{On random graphs}},
        date={1959},
     journal={Publicationes of the Mathematicae},
      volume={6},
       pages={290\ndash 297},
}

\bib{Estrada2015-ha}{article}{
      author={Estrada, Ernesto},
      author={Chen, Guanrong},
       title={{Synchronizability of random rectangular graphs}},
        date={2015},
        ISSN={1054-1500, 1089-7682},
     journal={Chaos},
      volume={25},
      number={8},
       pages={083107},
}

\bib{Friedman2008-sf}{book}{
      author={Friedman, Joel},
       title={{A proof of Alon's second eigenvalue conjecture and related problems}},
      series={Memoirs of the American Mathematical Society},
   publisher={American Mathematical Society},
        date={2008},
      volume={910},
        ISBN={9780821842805},
}

\bib{Geshkovski2023-tn}{article}{
      author={Geshkovski, Borjan},
      author={Letrouit, Cyril},
      author={Polyanskiy, Yury},
      author={Rigollet, Philippe},
       title={A mathematical perspective on transformers},
        date={2025},
     journal={Bulletin of the American Mathematical Society},
      volume={62},
      number={3},
       pages={427\ndash 479},
}

\bib{Hoory2006-uk}{article}{
      author={Hoory, Shlomo},
      author={Linial, Nathan},
      author={Wigderson, Avi},
       title={{Expander graphs and their applications}},
        date={2006},
     journal={Bulletin of the American Mathematical Society},
      volume={43},
      number={4},
       pages={439\ndash 561},
}

\bib{Jadbabaie2004-lf}{inproceedings}{
      author={Jadbabaie, Ali},
      author={Motee, Nader},
      author={Barahona, Mauricio},
       title={{On the stability of the Kuramoto model of coupled nonlinear oscillators}},
        date={2004},
   booktitle={{Proceedings of the 2004 American Control Conference}},
      volume={5},
       pages={4296\ndash 4301 vol.5},
}

\bib{Jain2025-lr}{article}{
      author={Jain, Vishesh},
      author={Mizgerd, Clayton},
      author={Sawhney, Mehtaab},
       title={{The random graph process is globally synchronizing}},
        date={2025},
        ISSN={1469-2120,0024-6093},
     journal={Bulletin of the London Mathematical Society},
}

\bib{Janson2000-al}{book}{
      author={Janson, Svante},
      author={Łuczak, Tomasz},
      author={Ruciński, Andrzej},
       title={{Random Graphs}},
   publisher={Wiley-Interscience, New York},
        date={2000},
        ISBN={9780471175414},
}

\bib{Kassabov2021-rg}{article}{
      author={Kassabov, Martin},
      author={Strogatz, Steven~H},
      author={Townsend, Alex},
       title={{Sufficiently dense Kuramoto networks are globally synchronizing}},
        date={2021},
     journal={Chaos},
      volume={31},
      number={7},
       pages={073135},
}

\bib{Kassabov2022-nf}{article}{
      author={Kassabov, Martin},
      author={Strogatz, Steven~H},
      author={Townsend, Alex},
       title={{A global synchronization theorem for oscillators on a random graph}},
        date={2022},
     journal={Chaos},
      volume={32},
      number={9},
       pages={093119},
}

\bib{Kuramoto1975-tt}{inproceedings}{
      author={Kuramoto, Yoshiki},
       title={{Self-entrainment of a population of coupled non-linear oscillators}},
        date={1975},
   booktitle={{International Symposium on Mathematical Problems in Theoretical Physics}},
   publisher={Springer Berlin Heidelberg},
       pages={420\ndash 422},
}

\bib{Kuramoto1984-qz}{book}{
      author={Kuramoto, Yoshiki},
       title={{Chemical Oscillations, Waves, and Turbulence}},
      series={Springer Series in Synergetics},
   publisher={Springer Berlin Heidelberg},
        date={1984},
      volume={19},
}

\bib{Lageman2007-ps}{article}{
      author={Lageman, Christian},
       title={{Pointwise convergence of gradient‐like systems}},
        date={2007},
        ISSN={0025-584X, 1522-2616},
     journal={Mathematische Nachrichten},
      volume={280},
      number={13-14},
       pages={1543\ndash 1558},
}

\bib{Leyva2011-gd}{article}{
      author={Leyva, Inmaculada},
      author={Navas, Adrián},
      author={Sendiña-Nadal, Irene},
      author={Buldú, Javier~M},
      author={Almendral, Juan~A},
      author={Boccaletti, Stefano},
       title={{Synchronization waves in geometric networks}},
        date={2011},
     journal={Physical Review E},
      volume={84},
      number={6 Pt 2},
       pages={065101},
}

\bib{Ling2019-yn}{article}{
      author={Ling, Shuyang},
      author={Xu, Ruitu},
      author={Bandeira, Afonso~S},
       title={{On the Landscape of Synchronization Networks: A Perspective from Nonconvex Optimization}},
        date={2019},
     journal={SIAM Journal on Optimization},
      volume={29},
      number={3},
       pages={1879\ndash 1907},
}

\bib{Lu2020-ax}{article}{
      author={Lu, Jianfeng},
      author={Steinerberger, Stefan},
       title={{Synchronization of Kuramoto oscillators in dense networks}},
        date={2020},
     journal={Nonlinearity},
      volume={33},
      number={11},
       pages={5905},
}

\bib{Lubotzky1988-gu}{article}{
      author={Lubotzky, Alexander},
      author={Phillips, Ralph},
      author={Sarnak, Peter},
       title={{Ramanujan graphs}},
        date={1988},
     journal={Combinatorica},
      volume={8},
      number={3},
       pages={261\ndash 277},
}

\bib{Maistrenko2005-os}{article}{
      author={Maistrenko, Yuri~L},
      author={Popovych, Oleksandr~V},
      author={Tass, Peter~A},
       title={{Chaotic attractor in the Kuramoto model}},
        date={2005},
        ISSN={0218-1274},
     journal={International Journal of Bifurcation and Chaos},
      volume={15},
      number={11},
       pages={3457\ndash 3466},
}

\bib{Marcus2015-pt}{article}{
      author={Marcus, Adam},
      author={Spielman, Daniel},
      author={Srivastava, Nikhil},
       title={{Interlacing families I: Bipartite Ramanujan graphs of all degrees}},
        date={2015},
     journal={Annals of Mathematics},
       pages={307\ndash 325},
}

\bib{Margulis1988-bx}{article}{
      author={Margulis, Grigory~A},
       title={{Explicit group-theoretical constructions of combinatorial schemes and their application to the design of expanders and concentrators}},
    language={in Russian},
        date={1988},
        ISSN={0555-2923},
     journal={Problemy Peredachi Informatsii},
      volume={24},
      number={1},
       pages={51\ndash 60},
}

\bib{mcrae2025nonconvex}{article}{
      author={McRae, Andrew~D},
      author={Abdalla, Pedro},
      author={Bandeira, Afonso~S},
      author={Boumal, Nicolas},
       title={{Nonconvex landscapes for Z 2 synchronization and graph clustering are benign near exact recovery thresholds}},
        date={2025},
     journal={Information and Inference: A Journal of the IMA},
      volume={14},
      number={2},
}

\bib{Nilli1991-ga}{article}{
      author={Nilli, Alon},
       title={{On the second eigenvalue of a graph}},
        date={1991},
     journal={Discrete Mathematics},
      volume={91},
      number={2},
       pages={207\ndash 210},
}

\bib{Ochab2010-ps}{inproceedings}{
      author={Ochab, Jeremi},
      author={Góra, Paweł~F},
       title={{Synchronization of Coupled Oscillators in a Local One-Dimensional Kuramoto Model}},
        date={2010},
   booktitle={{Acta Physica Polonica B Proceedings Supplement}},
      volume={3},
   publisher={Jagiellonian University},
       pages={453\ndash 462},
}

\bib{Pikovsky2015-jl}{article}{
      author={Pikovsky, Arkady},
      author={Rosenblum, Michael},
       title={{Dynamics of globally coupled oscillators: Progress and perspectives}},
        date={2015},
        ISSN={1054-1500, 1089-7682},
     journal={Chaos},
      volume={25},
      number={9},
       pages={097616},
}

\bib{Pikovsky2001-lm}{book}{
      author={Pikovsky, Arkady},
      author={Rosenblum, Michael},
      author={Kurths, Jürgen},
       title={{Synchronization: A Universal Concept in Nonlinear Sciences}},
      series={Cambridge Nonlinear Science Series},
   publisher={Cambridge University Press},
        date={2001},
      volume={12},
        ISBN={9780521533522, 9780521592857},
}

\bib{Peche2007-ig}{article}{
      author={Péché, Sandrine},
      author={Soshnikov, Alexander},
       title={{Wigner Random Matrices with Non-Symmetrically Distributed Entries}},
        date={2007},
     journal={Journal of Statistical Physics},
      volume={129},
      number={5},
       pages={857\ndash 884},
}

\bib{Endor2024-uo}{article}{
      author={{Rakoto Endor}, Faniriana},
      author={Waldspurger, Ir\`{e}ne},
       title={{Benign landscape for Burer-Monteiro factorizations of MaxCut-type semidefinite programs}},
        date={2024},
     journal={arXiv [math.OC]},
      eprint={2411.03103},
}

\bib{Robinson1994-fh}{article}{
      author={Robinson, Robert~W},
      author={Wormald, Nicholas~C},
       title={{Almost all regular graphs are hamiltonian}},
        date={1994},
        ISSN={1042-9832, 1098-2418},
     journal={Random Structures \& Algorithms},
      volume={5},
      number={2},
       pages={363\ndash 374},
}

\bib{Rodrigues2016-ez}{article}{
      author={Rodrigues, Francisco~A},
      author={Peron, Thomas K D~M},
      author={Ji, Peng},
      author={Kurths, Jürgen},
       title={{The Kuramoto model in complex networks}},
        date={2016},
        ISSN={0370-1573},
     journal={Physics Reports},
      volume={610},
       pages={1\ndash 98},
}

\bib{Seginer2000-he}{article}{
      author={Seginer, Yoav},
       title={{The Expected Norm of Random Matrices}},
        date={2000},
     journal={Combinatorics, Probability and Computing},
      volume={9},
      number={2},
       pages={149\ndash 166},
}

\bib{Strogatz2000-le}{article}{
      author={Strogatz, Steven~H},
       title={{From Kuramoto to Crawford: exploring the onset of synchronization in populations of coupled oscillators}},
        date={2000},
        ISSN={0167-2789},
     journal={Physica D. Nonlinear Phenomena},
      volume={143},
      number={1},
       pages={1\ndash 20},
}

\bib{Strogatz2003-fh}{book}{
      author={Strogatz, Steven~H},
       title={{Sync: The Emerging Science of Spontaneous Order}},
   publisher={Hyperion},
     address={New York},
        date={2003},
        ISBN={9780786868445},
}

\bib{Tao2009-hd}{misc}{
      author={Tao, Terence},
       title={{Moser's entropy compression argument}},
    subtitle={{What's new}},
        date={2009},
         url={\url{https://terrytao.wordpress.com/2009/08/05/mosers-entropy-compression-argument/}},
        note={Accessed: 2023-11-13},
}

\bib{Taylor2012-jc}{article}{
      author={Taylor, Richard},
       title={{There is no non-zero stable fixed point for dense networks in the homogeneous Kuramoto model}},
        date={2012},
     journal={Journal of Physics A: Mathematical and Theoretical},
      volume={45},
      number={5},
       pages={055102},
}

\bib{Townsend2020-fa}{article}{
      author={Townsend, Alex},
      author={Stillman, Michael},
      author={Strogatz, Steven~H},
       title={{Dense networks that do not synchronize and sparse ones that do}},
        date={2020},
     journal={Chaos},
      volume={30},
      number={8},
       pages={083142},
}

\bib{Wiley2006-fz}{article}{
      author={Wiley, Daniel~A},
      author={Strogatz, Steven~H},
      author={Girvan, Michelle},
       title={{The size of the sync basin}},
        date={2006},
     journal={Chaos},
      volume={16},
      number={1},
       pages={015103},
}

\bib{Yoneda2021-ms}{article}{
      author={Yoneda, Ryosuke},
      author={Tatsukawa, Tsuyoshi},
      author={Teramae, Jun-Nosuke},
       title={{The lower bound of the network connectivity guaranteeing in-phase synchronization}},
        date={2021},
     journal={Chaos},
      volume={31},
      number={6},
       pages={063124},
}

\end{biblist}
\end{bibdiv}


\end{document}